\newtheorem{lemma}{Lemma}[section]
\newtheorem{proposition}[lemma]{Proposition}
\newtheorem{theorem}[lemma]{Theorem}
\newtheorem{corollary}[lemma]{Corollary}
\newtheorem{prop}[lemma]{Proposition}
\theoremstyle{definition}
\newtheorem{example}[lemma]{Example}
\newtheorem{definition}[lemma]{\sl Definition}
\newtheorem{question}[lemma]{Question}
\theoremstyle{remark}
\newtheorem{remark}[lemma]{Remark}
\newcommand{\Hom}{\operatorname{Hom}}
\newcommand{\cL}{\mathcal{L}}
\newcommand{\rk}{\operatorname{rk}}
\newcommand{\cE}{\mathcal{E}}
\newcommand{\cF}{\mathcal{F}}
\newcommand{\cvec}[2]{\begin{pmatrix} #1 \\ #2 \end{pmatrix}}
\newcommand{\cvecd}{\cvec{d_{-1}}{d_0}}
\newcommand{\cvecdp}{\cvec{d'_{-1}}{d'_0}}
\newcommand{\cvecr}{\cvec{r_{-1}}{r_0}}
\newcommand{\seed}{d_{-1},d_0,r_{-1},r_0}
\newcommand{\seedmatrix}{\begin{vmatrix}
    r_{-1} & d_{-1} \\ r_0 & d_0
\end{vmatrix} }
\numberwithin{equation}{section}
\begin{document}

\title{Two-periodic elliptic helices: classification and geometry}

\author{D. Chan}
\address{University of New South Wales}
\email{danielc@unsw.edu.au}

\author{A. Nyman}
\address{Western Washington University}
\email{adam.nyman@wwu.edu}
\keywords{}
\thanks{2020 {\it Mathematics Subject Classification. } Primary 14A22; Secondary 16S38}

\begin{abstract}
Let $k$ denote an algebraically closed field of characteristic zero and let $X$ denote a smooth elliptic curve over $k$.  In this paper, motivated by work in \cite{CN}, we think of two-periodic elliptic helices as noncommutative analogues of degree two line bundles over $X$.  We classify and study two-periodic elliptic helices in order to generalize the theory of double covers of $\mathbb{P}^{1}$ by $X$ to the noncommutative setting.  This leads to the following problem:  given an integer $d>2$ and a real number $\theta \in \mathbb{Q}+\mathbb{Q}\sqrt{d^2-4}$, classify elliptic helices inducing double covers of $\mathbb{P}^{1}_{d}$ by ${\sf C}^{\theta}$, where $\mathbb{P}^{1}_{d}$ is Piontkovski's noncommutative projective line and ${\sf C}^{\theta}$ is Polischuk's noncommutative elliptic curve.  We find examples of $d$ and $\theta$ such that there is essentially one numerical class of elliptic helices and examples of $d$ and $\theta$ such that there are several distinct numerical classes of elliptic helices, in contrast to the commutative situation.  
\end{abstract}

\maketitle

\pagenumbering{arabic}

\section{Introduction}
Throughout this paper, we let $k$ denote an algebraically closed field of characteristic zero and we let $X$ denote a smooth elliptic curve over $k$.  Recall the following result, which classifies linear systems of dimension one and degree two, $g^1_2$'s, on $X$. 
\begin{prop} \label{prop.commutative}
Let $\mathcal{L}$ be a degree two line bundle over $X$.  Then 
\begin{enumerate}
\item{} any degree two line bundle over $X$ is isomorphic to $\mathcal{L}_{0} \otimes \mathcal{L}$, where $\mathcal{L}_{0}$ is a degree zero line bundle.  Furthermore, degree zero line bundles are parameterized by $X$ \cite{Ati},  

\item{} given any choice of generating global sections of $\mathcal{L}$, one obtains a double cover $f:X \rightarrow \mathbb{P}^{1}$ such that $f^{*}\mathcal{O}(1) \cong \mathcal{L}$, and

\item{} given two double covers $f,g:X \rightarrow \mathbb{P}^{1}$, there exist automorphisms $\sigma$ of $X$ and $\tau$ of $\mathbb{P}^{1}$ such that the diagram
$$
\begin{CD}
X & \overset{\sigma}{\longrightarrow} & X \\
@V{f}VV @VV{g}V \\ 
\mathbb{P}^{1} & \underset{\tau}{\longrightarrow} & \mathbb{P}^{1}
\end{CD}
$$
commutes \cite[Chapter IV, Lemma 4.4]{hartshorne}.
\end{enumerate}
\end{prop}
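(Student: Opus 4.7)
The plan is to establish the three parts in order, with the substantive work concentrated in parts (2) and (3).

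For part (1), tensoring with $\mathcal{L}^{-1}$ gives a bijection between $\operatorname{Pic}^2(X)$ and $\operatorname{Pic}^0(X)$, so every degree two line bundle is uniquely of the form $\mathcal{L}_0 \otimes \mathcal{L}$ for some $\mathcal{L}_0 \in \operatorname{Pic}^0(X)$. Once a base point $0 \in X$ is fixed, the Abel--Jacobi map $p \mapsto \mathcal{O}(p - 0)$ identifies $X$ with $\operatorname{Pic}^0(X)$; this is the classical parameterization cited to Atiyah.

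For part (2), I would apply Riemann--Roch on the elliptic curve: since $g(X) = 1$, $\chi(\mathcal{L}) = 2$, and Serre duality together with $\omega_X \cong \mathcal{O}_X$ gives $h^1(\mathcal{L}) = h^0(\mathcal{L}^{-1}) = 0$ (as $\deg \mathcal{L}^{-1} < 0$), so $h^0(\mathcal{L}) = 2$. Base-point freeness is standard: a base point $p$ would yield an inclusion $H^0(\mathcal{L}) \hookrightarrow H^0(\mathcal{L}(-p))$, but the latter has dimension $1$ by Riemann--Roch, a contradiction. A choice of generating sections then produces a morphism $f: X \to \mathbb{P}^1$ with $f^*\mathcal{O}(1) \cong \mathcal{L}$, and $\deg f = \deg \mathcal{L} = 2$.

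For part (3), the crucial observation is that the translation action of $X$ on itself is transitive on $\operatorname{Pic}^2(X)$. Writing $t_q : X \to X$ for translation by $q$ and computing on divisors, one checks that $t_q^*\mathcal{L} \otimes \mathcal{L}^{-1}$ corresponds under $\operatorname{Pic}^0(X) \cong X$ to $-2q$. Since multiplication by $-2$ is a surjective endomorphism of the abelian variety $X$, every element of $\operatorname{Pic}^0(X)$ arises in this way, and the translation action on $\operatorname{Pic}^2(X)$ is transitive. Given double covers $f, g$ with $\mathcal{L} := f^*\mathcal{O}(1)$ and $\mathcal{M} := g^*\mathcal{O}(1)$, I would choose a translation $\sigma$ with $\sigma^*\mathcal{M} \cong \mathcal{L}$; then $g \circ \sigma$ and $f$ are both morphisms to $\mathbb{P}^1$ induced by the full two-dimensional space $H^0(\mathcal{L})$ (by part (2)), so they differ only by a change of basis, i.e.\ an automorphism $\tau \in \operatorname{Aut}(\mathbb{P}^1) = \operatorname{PGL}_2(k)$, making the diagram commute. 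The step I expect to be most delicate is the transitivity claim in part (3), which depends both on the surjectivity of multiplication by $-2$ on $X$ and on a careful divisor-theoretic identification of $t_q^*\mathcal{L} \otimes \mathcal{L}^{-1}$ with the class of $-2q$.
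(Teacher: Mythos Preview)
The paper does not give its own proof of this proposition: it is stated in the introduction as a classical result, with part~(1) attributed to Atiyah and part~(3) to Hartshorne, and is immediately followed by the sentence ``The purpose of this paper is to present a noncommutative generalization of this picture.'' So there is no in-paper argument to compare against.

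Your proof is correct and is essentially the standard one. The Riemann--Roch and base-point-freeness computation in part~(2) is exactly what one finds in the references, and your part~(3) argument---reducing to transitivity of translations on $\operatorname{Pic}^2(X)$ via the identity $t_q^*\mathcal{L}\otimes\mathcal{L}^{-1}\leftrightarrow -2q$ and the surjectivity of multiplication by $-2$---is the usual proof of \cite[Chapter~IV, Lemma~4.4]{hartshorne} specialized to genus one. It is worth noting that the paper's noncommutative analogue (Proposition~\ref{prop.twistTranslateHelix}) follows the same architecture as your part~(3): it too combines tensoring by degree-zero line bundles with translations on the elliptic curve, and reduces the existence of the required auto-equivalence to the surjectivity of an isogeny on $X\times X$ given by an integer matrix of nonzero determinant, exactly paralleling your surjectivity-of-multiplication-by-$-2$ step.
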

The purpose of this paper is to present a noncommutative generalization of this picture.

Three-periodic elliptic helices in a triangulated category were introduced by Bondal and Polishchuk in \cite{bp}, and three-periodic elliptic helices composed of line bundles over $X$ were used by Bondal and Polishchuk in order to construct and classify Artin-Schelter regular algebras of global dimension and Gorenstein parameter three.  In order to describe the utility of two-periodic elliptic helices initially studied in \cite{CN}, recall that for every $\theta \in \mathbb{R}$, Polishchuk defined a nonstandard $t$-structure on $D^{b}(X)$, and defined the abelian category ${\sf C}^{\theta}$ as the heart of this $t$-structure \cite{polish}.  He then proved that ${\sf C}^{\theta}$ has cohomological dimension one and is derived equivalent to $X$ (\cite[Proposition 1.4]{polish}), justifying the perspective that ${\sf C}^{\theta}$ is a noncommutative elliptic curve.  In addition, for $d \geq 2$, the space $\mathbb{P}^{1}_{d}$ studied by Piontkovski \cite{piont} is a noncommutative analogue of the projective line (see Section \ref{sub.ncp1} for more details).  In \cite{CN}, two-periodic elliptic helices composed of simple vector bundles over $X$ (see Definition \ref{def:mutable} for details) were constructed and used to build double covers of $\mathbb{P}^{1}_{d}$ by ${\sf C}^{\theta}$ for certain values of $d$ and $\theta$.  Based on the constructions in \cite{CN}, it seems reasonable to think of two-periodic elliptic helices as noncommutative analogues of degree two line bundles over $X$, and we pursue that perspective here.  For this reason, in this paper we classify all two-periodic elliptic helices, $\underline{\mathcal{E}} := (\mathcal{E}_{i})_{i \in \mathbb{Z}}$, over $X$ (Theorem \ref{thm.elliptic2extend}):

\begin{theorem} \label{theorem.class}
Suppose $\mathcal{E}_{-1}$ and $\mathcal{E}_{0}$ are simple vector bundles over $X$ with slopes $\mu_{-1}$ and $\mu_{0}$, respectively, such that $\mu_{-1}<\mu_{0}$.  Let $d:= \operatorname{dim }\operatorname{Hom}(\mathcal{E}_{-1}, \mathcal{E}_{0})$.  Then the pair $(\mathcal{E}_{-1},\mathcal{E}_{0})$ extends, via mutation, to an elliptic helix of period two if and only if
\begin{itemize}
\item{} $d=2$.  In this case, $r_{-1}=r_{0}=1$ and $d_{0}=d_{-1}+2$, or

\item{} $d>2$ and
\begin{equation} \label{eqn.helixcondD}
dr_{-1}r_{0}-r_{-1}^{2}-r_{0}^{2}>0.
\end{equation}
\end{itemize}
\end{theorem}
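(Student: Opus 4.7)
The plan is to analyze the numerical invariants of the helix under iterated mutation, reducing the geometric extension problem to a positivity question for an integer linear recurrence. Write $r_i = \rk \cE_i$ and $d_i = \deg \cE_i$; since $\cE_{-1}$ and $\cE_0$ are simple (hence stable) with $\mu_{-1} < \mu_0$, Serre duality on $X$ (using $\omega_X \cong \cO_X$) gives $\operatorname{Ext}^{1}(\cE_{-1},\cE_0)=0$, and Riemann--Roch identifies $d = r_{-1}d_0 - r_0 d_{-1}$. The right mutation $\cE_1 := R_{\cE_0}\cE_{-1}$ is defined by a short exact sequence
$$
0 \arr \cE_{-1} \arr \Hom(\cE_{-1},\cE_0)^{*} \otimes \cE_0 \arr \cE_1 \arr 0,
$$
yielding $r_1 = dr_0 - r_{-1}$ and $d_1 = dd_0 - d_{-1}$; iterating (with the symmetric left mutation in the other direction) produces a two-sided recurrence $r_{i+1} = dr_i - r_{i-1}$, $d_{i+1} = dd_i - d_{i-1}$. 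A direct computation shows the determinant $r_{i-1}d_i - r_i d_{i-1} = d$ is preserved along the orbit, so $\dimension \Hom(\cE_i,\cE_{i+1}) = d$ at every stage.

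The first thing to prove is that whenever this recurrence produces $r_i > 0$ for all $i \in \mathbb{Z}$, each $\cE_i$ is genuinely a simple vector bundle. The slope gaps stay uniformly positive because $\mu_i - \mu_{i-1} = d/(r_{i-1}r_i) > 0$, which keeps the relevant $\operatorname{Ext}^{1}$ groups trivial and ensures that each mutation triangle collapses to a short exact sequence with torsion-free cokernel (the source being stable of smaller slope). The preserved determinant combined with $\gcd(r_{-1},d_{-1}) = \gcd(r_0,d_0) = 1$ (forced by Atiyah's simplicity criterion on $X$) propagates inductively to $\gcd(r_i,d_i) = 1$ for all $i$, which together with torsion-freeness upgrades each $\cE_i$ to a stable, hence simple, vector bundle.

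The heart of the argument is the positivity analysis for $(r_i)_{i \in \mathbb{Z}}$. The quadratic form $Q(x,y) := dxy - x^{2} - y^{2}$ is a first integral of the rank recurrence, so $Q(r_i,r_{i+1}) = Q(r_{-1},r_0) = dr_{-1}r_0 - r_{-1}^{2} - r_0^{2}$ for all $i$. When $d=2$ the recurrence reads $r_{i+1} - r_i = r_i - r_{i-1}$, making $(r_i)$ arithmetic; positivity in both directions forces it to be constant $r_i = r$, and then $d = r(d_0 - d_{-1}) = 2$ leaves only $(r, d_0-d_{-1}) = (1,2)$, since the alternative $(2,1)$ contradicts $\gcd(2,d_{-1}) = \gcd(2,d_0) = 1$. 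When $d > 2$, the characteristic polynomial $t^{2} - dt + 1$ has reciprocal real roots $\lambda^{\pm 1}$ with $\lambda > 1$ irrational (as $d^{2}-4$ is not a perfect square for $d \geq 3$); writing $r_i = A\lambda^i + B\lambda^{-i}$ and using $\lambda + \lambda^{-1} = d$ one computes $Q(r_{-1},r_0) = (d^{2}-4)AB$. Irrationality of $\lambda$ forces $A, B \neq 0$, and positivity of $(r_i)$ for all $i \in \mathbb{Z}$ is equivalent to $AB > 0$, which is exactly condition $(\ref{eqn.helixcondD})$.

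The main obstacle I anticipate is the geometric upgrade carried out in the second paragraph: ensuring that numerical positivity truly promotes each iterated mutation to a simple vector bundle rather than a complex or a sheaf with torsion. This requires tracking slope monotonicity and Ext-vanishing uniformly along the entire $\mathbb{Z}$-orbit and invoking the Atiyah classification at each step to lift the numerical gcd condition plus torsion-freeness to stability.
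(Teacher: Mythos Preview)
Your argument is essentially the paper's own: both reduce the extension problem to the positivity of the rank recurrence $r_{i+1}=dr_i-r_{i-1}$ and then analyze that recurrence case by case. The paper invokes \cite[Lemma~7.6]{morph} for the reduction, while you sketch it directly via slope monotonicity, Ext-vanishing and Atiyah's coprimality criterion; for $d>2$ you package the eigenvalue analysis through the first integral $Q(x,y)=dxy-x^2-y^2=(d^2-4)AB$, which is exactly the reformulation the paper records in Remark~\ref{rem.invariant}.

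Two small points. First, you omit the case $d=1$: the theorem implicitly asserts no helix exists then, and the paper handles it in one line by observing $r_2=r_1-r_0=-r_{-1}<0$. Your framework covers it too (the characteristic roots are nonreal, or equivalently $Q(r_{-1},r_0)<0$ for all positive integers), but you should say so. Second, in your geometric upgrade the delicate step is torsion-freeness of the \emph{right} mutation $\operatorname{coker}(\mathcal{E}_{-1}\to \Hom(\mathcal{E}_{-1},\mathcal{E}_0)^*\otimes\mathcal{E}_0)$; left mutations are automatically locally free as kernels of bundle surjections, but for the cokernel you need an extra argument (e.g.\ dualise and run the left-mutation argument on $(\mathcal{E}_0^*,\mathcal{E}_{-1}^*)$, which is how the paper invokes \cite[Lemma~7.6]{morph} twice). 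Your sketch gestures at this but does not pin it down.
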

The numerical condition \cite[Lemma 7.6]{morph}, which we didn't know when \cite{CN} was written, plays a key role in the proof.  We next show that, in case $d>2$, the sequence of slopes (in the negative direction) of the vector bundles in a two-periodic elliptic helix has an irrational limit, $\theta$ (Lemma \ref{lemma.limit}).  The value of $\theta$, which is always in $\mathbb{Q} + \mathbb{Q}\sqrt{d^2-4}$, is explicitly given in terms of the numerical data defining $\mathcal{E}_{-1}$ and $\mathcal{E}_{0}$.  We also prove that all two-periodic elliptic helices are ample (Proposition \ref{prop.smallample}).  The significance of this last result is that the $\mathbb{Z}$-algebra $B_{\underline{\mathcal{E}}}$ with $i,j$-component $\operatorname{Hom}(\mathcal{E}_{-j}, \mathcal{E}_{-i})$ and multiplication induced by composition is a noncommutative homogeneous coordinate ring of the noncommutative elliptic curve ${\sf C}^{\theta}$.  As $\theta$ is irrational, $B_{\underline{\mathcal{E}}}$ is coherent and nonnoetherian (\cite[Proposition 3.1]{polish}).

Before we describe our noncommutative version of Proposition \ref{prop.commutative}(1), we relate the $d=2$ case of Theorem \ref{theorem.class} to the commutative case.  The following terminology will be useful:

\begin{definition} \label{def:numericalShift}
Let $\underline{\mathcal{E}}$ be a two-periodic elliptic helix.  If $\mathcal{L}$ denotes a line bundle over $X$, we let $\mathcal{L} \otimes \underline{\mathcal{E}}$ denote the two-periodic elliptic helix with $i$th component $\mathcal{L} \otimes \mathcal{E}_{i}$ and call this the {\it twist} of $\underline{\mathcal{E}}$ by $\mathcal{L}$.  If $m \in \mathbb{Z}$, we let $\underline{\mathcal{E}}[m]$ denote the two-periodic elliptic helix with $i$th component $\mathcal{E}_{i+m}$ and call $\underline{\mathcal{E}}[m]$ the {\it shift of $\underline{\mathcal{E}}$ by $m$}.  We say $\underline{\mathcal{E}}$ and $\underline{\mathcal{F}}$ are {\it numerically equal} if $\operatorname{rank }\mathcal{F}_{i} = \operatorname{rank }\mathcal{E}_{i}$ and $\operatorname{deg }\mathcal{F}_{i}=\operatorname{deg }\mathcal{E}_{i}$ for all $i$.  We say two-periodic elliptic helices are {\it in the same numerical class} if they are numerically equal after shifting and twisting.
\end{definition}
Now suppose $\underline{\mathcal{E}}$ is a two-periodic elliptic helix with $d=2$. Then, by Theorem \ref{theorem.class} (and Remark \ref{remark.helix}(3)), we may twist $\underline{\mathcal{E}}$ by either a degree zero or degree one line bundle so that some component of our helix is $\mathcal{O}_{X}$ while the next is a degree two line bundle $\mathcal{L}$.   It follows that, up to shifting, we may assume $\mathcal{E}_{-1} \cong \mathcal{O}_{X}$ and $\mathcal{E}_{0} \cong \mathcal{L}$.  Since a two-periodic elliptic helix is determined by any two subsequent terms (see Remark \ref{remark.helix}(4)), the terms of $\underline{\mathcal{E}}$ must be isomorphic to tensor powers of $\mathcal{L}$ by \cite[Example 3.5]{CN}.  Notice that, in this case, the negative limit slope $\theta$ is $-\infty$, and every pair of two-periodic elliptic helices are in the same numerical class.   

This leads us to the following 

\begin{question} \label{question.unique}
For which values of $d>2$ and $\theta$ is there a unique numerical class of two-periodic elliptic helices?  If there is not a unique numerical class, how many classes are there?
\end{question}
 
We address Question \ref{question.unique} in Section \ref{section.diffcovers} for various values of $d$ and $\theta$, but a comprehensive solution is beyond our reach.  For example, we show (Proposition \ref{prop.caseyis1}) that if $\theta \in \mathbb{Q}-\frac{d}{2(d-2)}\sqrt{d^2-4}$, then any pair of two-periodic elliptic helices are in the same numerical class.  Although twisting will change the rational part of the negative limit slope by the degree of the twisting bundle, the value of $d$ is unchanged (see Section \ref{section.helixops}).  Therefore, any two elliptic helices with equal $\theta$ (in the set above) and equal $d>2$ are numerically equal up to shifting.  This holds, in particular, for the values of $d$ and $\theta$ studied in \cite{CN}.  

We also find many examples for which the answer to Question \ref{question.unique} is negative.  For example, we prove (Proposition \ref{prop.caseyis3}) that if $d=10$ and $\theta \in \mathbb{Z}-\sqrt{6}$, then there are two distinct numerical classes of two-periodic elliptic helices.  

The heart of the investigation of Question \ref{question.unique} involves the classification of solutions to an equation related to a Pell-like equation, as we describe below.  Given a two-periodic elliptic helix $\underline{\mathcal{E}}$, the left-hand side of (\ref{eqn.helixcondD}) is an associated positive integer $D$ such that every consecutive pair of ranks, $r_{i-1}, r_{i}$ of $\mathcal{E}_{i-1}, \mathcal{E}_{i}$ satisfies the equation
\begin{equation} \label{eqn.orig}
dr_{i-1}r_{i}-r_{i-1}^{2}-r_{i}^{2} = D.
\end{equation}
Diagonalizing the corresponding quadratic form yields the quadratic form
\begin{equation} \label{eqn.formintro}
(d-2)x^2-(d+2)y^{2}=4D
\end{equation}
which has the property that $(r_{i-1}+r_{i}, r_{i}-r_{i-1})$ is an integer solution for all $i$. Furthermore, any other two-periodic elliptic helix with the same limit slope and the same value of $d$ also provides solutions to (\ref{eqn.formintro}).  By an observation of Lagrange, there is a Pell-like equation whose integer solutions are related to solutions to (\ref{eqn.formintro}) \cite[Section 17]{pell}.  Specifically, for every integer solution $(x,y)$ to (\ref{eqn.formintro}), the pair $(X,Y)$ with $X=4(d^2-4)y$ and $Y=2(d-2)x$ is an integer solution to
\begin{equation} \label{eqn.pellpell}
X^{2}-4(d^{2}-4)Y^2=-64(d^{2}-4)(d-2)D.
\end{equation}
In general, there is a finite set $S$ of solutions to (\ref{eqn.pellpell}) such that every solution to (\ref{eqn.pellpell}) is a so-called Pell associate of a solution in $S$.  See \cite[Section 17]{pell} for more details.  Our analysis of solutions to (\ref{eqn.orig}), however, doesn't utilize the theory of Pell-like equations.

We next turn to the study of morphisms induced by two-periodic elliptic helices (Section \ref{sec.doubleCovers}), and note that a choice of basis of $\operatorname{Hom}(\mathcal{E}_{-1}, \mathcal{E}_{0})$ induces a double cover (i.e. a functor)
$$
F_{\underline{\mathcal{E}}}:{\sf C}^{\theta} \longrightarrow \mathbb{P}^{1}_{d}
$$
which has a left-adjoint $F_{\underline{\mathcal{E}}}^{*}$ (see (\ref{eqn.maptop1})).  We prove that if $\mathcal{O}(i)$ is the twist of the canonical structure sheaf of $\mathbb{P}^{1}_{d}$, then $F_{\underline{\mathcal{E}}}^{*}(\mathcal{O}(i)) \cong \mathcal{E}_{i}$ for all $i$ (Proposition \ref{prop:pullbackFunctor}).  This is our noncommutative version of Proposition \ref{prop.commutative}(2).  It is interesting to note that, as was mentioned above, the values of $\theta$ that arise are constrained to be in $\mathbb{Q}+\mathbb{Q}\sqrt{d^2-4}$.  Noncommutative elliptic curves derived equivalent to $X$ are parameterized by the real numbers, and it is unclear whether it is possible to extend our results to other values of $\theta$.

We use Proposition \ref{prop:pullbackFunctor} to prove the following noncommutative generalization of Proposition \ref{prop.commutative}(3)  (Proposition \ref{prop.twistTranslateHelix}):

\begin{proposition} 
Let $\underline{\mathcal{E}}, \underline{\mathcal{F}}$ be elliptic helices such that $\deg \mathcal{E}_i = \deg \mathcal{F}_i = d_i$,  $\rk \mathcal{E}_i = \rk \mathcal{F}_i=  r_i$ for $i = -1, 0$. Then there exist auto-equivalences $\sigma$ of ${\sf C}^{\theta}$ and $\tau$ of $\mathbb{P}_{d}^{1}$ such that the diagram of functors
$$
\begin{CD}
{\sf C}^{\theta} & \overset{\sigma}{\longrightarrow} & {\sf C}^{\theta} \\
@V{F_{\underline{\mathcal{E}}}}VV @VV{F_{\underline{\mathcal{F}}}}V \\ 
\mathbb{P}^{1}_{d} & \underset{\tau}{\longrightarrow} & \mathbb{P}^{1}_{d}
\end{CD}
$$
commutes up to natural equivalence.
\end{proposition}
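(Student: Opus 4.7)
The plan is to construct $\sigma$ and $\tau$ in two stages: first an auto-equivalence $\sigma$ of ${\sf C}^{\theta}$ sending $(\mathcal{E}_{-1}, \mathcal{E}_0)$ to $(\mathcal{F}_{-1}, \mathcal{F}_0)$ (from which $\sigma(\underline{\mathcal{E}}) \cong \underline{\mathcal{F}}$ follows termwise by the uniqueness of helix extensions, Remark \ref{remark.helix}(4)), and then an auto-equivalence $\tau$ of $\mathbb{P}^{1}_{d}$ absorbing the residual ambiguity in the bases used to define $F_{\underline{\mathcal{E}}}$ and $F_{\underline{\mathcal{F}}}$.

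For the first step, I would use the auto-equivalences of ${\sf C}^{\theta}$ given by tensoring with a line bundle $\mathcal{L} \in \operatorname{Pic}^0(X)$ and by pullback $t_a^{*}$ along a translation of $X$; both preserve rank, degree and slope-semistability, hence preserve the heart ${\sf C}^{\theta}$. By Atiyah's classification, simple bundles of rank $r$ and degree $d$ (with $\gcd(r,d) = 1$, forced by simplicity) are parameterized by $X$ via the determinant, and a calculation using the theorem of the square shows that tensoring by $\mathcal{L}$ acts on this moduli as multiplication by $r$, while $t_a^{*}$ acts as translation by $\pm d \cdot a$. Hence the combined action of $\operatorname{Pic}^0(X) \times X$ on the pair moduli $X \times X$ is given, up to signs, by the integer matrix $\left(\begin{smallmatrix} r_{-1} & d_{-1} \\ r_0 & d_0 \end{smallmatrix}\right)$, whose determinant $r_{-1}d_0 - r_0 d_{-1} = r_{-1}r_0(\mu_0 - \mu_{-1})$ is strictly positive. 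The map is therefore an isogeny of $X \times X$, in particular surjective, and a suitable pair $(\mathcal{L}, a)$ produces the required $\sigma$.

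For the second step, taking left adjoints reduces the sought commutativity $\tau \circ F_{\underline{\mathcal{E}}} \simeq F_{\underline{\mathcal{F}}} \circ \sigma$ to $F_{\underline{\mathcal{E}}}^{*} \circ \tau^{-1} \simeq \sigma^{-1} \circ F_{\underline{\mathcal{F}}}^{*}$. Both composites send $\mathcal{O}(i)$ to isomorphic copies of $\mathcal{E}_i$ by Proposition \ref{prop:pullbackFunctor} and the previous step; what distinguishes them is the identification of $\Hom(\mathcal{O}(-1), \mathcal{O}(0))$ with $\Hom(\mathcal{E}_{-1}, \mathcal{E}_0)$, which may differ by an element of $\operatorname{GL}_d$. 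Any such change of basis of the $d$ generators of Piontkovski's algebra yields an auto-equivalence of $\mathbb{P}^{1}_{d}$, and choosing $\tau^{-1}$ to implement the required base change gives $\tau$.

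The principal obstacle is the first step, specifically the surjectivity of the $\operatorname{Pic}^0(X) \times X$-action on pair moduli. For $\operatorname{End}(X) = \mathbb{Z}$ this is immediate from the nonvanishing of the integer determinant, but in the CM case one needs to observe that the same integer determinant remains nonzero as an element of $\operatorname{End}(X)$, and hence still defines an isogeny of abelian surfaces. The second step is essentially formal, resting on the $\operatorname{GL}_d$-symmetry of $\mathbb{P}^{1}_{d}$ coming from the action on the generators of Piontkovski's algebra.
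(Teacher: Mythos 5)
Your proposal is correct and follows essentially the same route as the paper: reduce via the left adjoints (Proposition \ref{prop:pullbackFunctor}) to finding an auto-equivalence of ${\sf C}^{\theta}$ carrying $(\mathcal{E}_{-1},\mathcal{E}_0)$ to $(\mathcal{F}_{-1},\mathcal{F}_0)$, realize it by twisting with a degree-zero line bundle and pulling back along a translation, and check surjectivity of the resulting map on $X \times X$ given by the integer matrix $\left(\begin{smallmatrix} r_{-1} & d_{-1}\\ r_0 & d_0\end{smallmatrix}\right)$, with the residual base-change ambiguity absorbed by the $PGL_d$ part of $\operatorname{Aut}\mathbb{P}^1_d$. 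Your worry about the CM case is moot: since the matrix is integral, composing with its adjugate gives multiplication by $d$ on each factor (this is exactly the paper's argument), so surjectivity holds regardless of $\operatorname{End}(X)$.
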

Given this result, one might ask what the group of autoequivalences of $\mathbb{P}^{1}_{d}$ and ${\sf C}^{\theta}$ are.  We determine the autoequivalences of $\mathbb{P}^{1}_{d}$ in Proposition \ref{prop.autoP1d}:

\begin{proposition}
The group of auto-equivalences of $\mathbb{P}^{1}_{d}$ is isomorphic to $\mathbb{Z} \times PGL_d$. The factor $\mathbb{Z}$ is generated by graded shifts whilst $PGL_d$ corresponds to automorphisms of $k^d$.
\end{proposition}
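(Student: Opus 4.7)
The plan is to realize $\mathbb{P}^1_d$ through its canonical homogeneous coordinate $\mathbb{Z}$-algebra $B$ with $B_{ij} = \operatorname{Hom}(\mathcal{O}(-j), \mathcal{O}(-i))$, which is the tensor $\mathbb{Z}$-algebra on the $d$-dimensional vector space $V = B_{i,i+1}$. First I would construct the homomorphism $\mathbb{Z} \times PGL_d \to \mathrm{Aut}(\mathbb{P}^1_d)$. The $\mathbb{Z}$ factor is generated by the shift auto-equivalence $\mathcal{O}(i) \mapsto \mathcal{O}(i+1)$, while the $PGL_d$ factor comes from elements $g \in GL(V)$ acting diagonally on each $B_{i,i+1}$ and extending multiplicatively to $B$; scalar $g$ act by inner automorphisms of $B$, so this descends to a well-defined $PGL_d$-action that commutes with the shift. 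Injectivity is then immediate: a pair $(n,[g])$ acting as the identity must fix $\mathcal{O}(0)$ (forcing $n = 0$) and act on $\operatorname{Hom}(\mathcal{O}(0),\mathcal{O}(1))$ by scalars (forcing $[g] = 1$).

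For surjectivity, let $F$ be an arbitrary auto-equivalence of $\mathbb{P}^1_d$. The crucial step is to show that $F(\mathcal{O}(0)) \cong \mathcal{O}(n)$ for some $n \in \mathbb{Z}$. I would characterize the $\mathcal{O}(i)$ intrinsically as (up to isomorphism) the simple exceptional objects extending to an exceptional helix of the canonical numerical type, so that $F$ must send $\mathcal{O}(0)$ to another such object. After composing $F$ with the shift by $-n$, we may assume $F(\mathcal{O}(0)) \cong \mathcal{O}(0)$. Since the helix $(\mathcal{O}(i))_{i\in\mathbb{Z}}$ is determined by any two consecutive terms via mutation and $F$ commutes with mutation, we obtain compatible isomorphisms $F(\mathcal{O}(i)) \cong \mathcal{O}(i)$ for every $i$. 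Fixing such isomorphisms, $F$ induces automorphisms $g_i \in GL(V)$ of each $B_{i,i+1}$, and compatibility with the tensor multiplication in $B$ forces the $g_i$ to agree in $PGL(V)$. Rescaling the chosen isomorphisms by appropriate scalars $\lambda_i \in k^\times$ lifts all $g_i$ to a common element $g \in GL(V)$, and $F$ is then the image of $(n,[g])$.

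The main obstacle will be the intrinsic characterization of the canonical line bundles, namely the proof that every auto-equivalence sends $\mathcal{O}(0)$ to some $\mathcal{O}(n)$. This is a rigidity statement requiring the identification of numerical invariants preserved by auto-equivalences together with a classification showing that the $\mathcal{O}(i)$ exhaust the objects carrying those invariants; in effect, one must verify that the canonical helix of $\mathbb{P}^1_d$ is the unique exceptional helix up to shift and the $PGL_d$-action.
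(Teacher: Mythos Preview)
Your overall strategy---construct the map, prove injectivity, then for surjectivity reduce to the intrinsic characterization of the $\mathcal{O}(i)$ and show that the degree-one action determines everything---matches the paper's. However, there are two genuine problems.

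First, the claim that $B$ is the tensor $\mathbb{Z}$-algebra on $V$ is false. The Euler exact sequences
\[
0 \to \mathcal{O}(i) \to k^d \otimes \mathcal{O}(i+1) \to \mathcal{O}(i+2) \to 0
\]
show that $\dim B_{i,i+2} = d^2 - 1$, not $d^2$; the algebra is the noncommutative symmetric algebra $\mathbb{S}^{nc}(k^d)$, which has a single quadratic relation in each pair of consecutive degrees. This is not cosmetic: your argument that ``compatibility with the tensor multiplication in $B$ forces the $g_i$ to agree in $PGL(V)$'' would actually \emph{fail} for the tensor algebra, since the multiplication $V \otimes V \to V^{\otimes 2}$ imposes no relation between $g_i$ and $g_{i+1}$. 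What really forces the $g_i$ to be determined by $g_0$ is precisely the Euler relation. The paper makes this step precise via a separate lemma (Proposition~\ref{prop:functorPreserving Euler}) showing that a functor preserving exactness of a universal Euler sequence has its action on $\operatorname{Hom}(\mathcal{O}_1,\mathcal{O}_2)$ determined, up to $\operatorname{Aut}(F\mathcal{O}_2)$, by its action on $\operatorname{Hom}(\mathcal{O}_0,\mathcal{O}_1)$; it then invokes an Eilenberg--Watts theorem for $\mathbb{Z}$-algebras to conclude that the bimodule $\{F\mathcal{O}(i)\}$ determines $F$ up to natural isomorphism.

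Second, for the intrinsic characterization you propose ``simple exceptional objects extending to an exceptional helix of the canonical numerical type,'' but you do not say what numerical invariants are available on $\mathbb{P}^1_d$ or why they are preserved, and you yourself flag this as the main obstacle. The paper's solution is rather different and more concrete: using a torsion pair $(\mathsf{T},\mathsf{F})$ from \cite{CNrepStudy} in which $\mathsf{F}$ consists of direct sums of the $\mathcal{O}(i)$, together with the Serre functor $\otimes\,\omega$ (which satisfies $\mathcal{O}(i)\otimes\omega \cong \mathcal{O}(i-2)$ and stabilizes both $\mathsf{T}$ and $\mathsf{F}$), it characterizes the $\mathcal{O}(i)$ as exactly the indecomposables $\mathcal{F}$ for which $(\mathcal{F}\otimes\omega^n)_{n\in\mathbb{Z}}$ is ample. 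Since any auto-equivalence commutes with the Serre functor and preserves ampleness, $F$ must permute the $\mathcal{O}(i)$; a Hom-dimension count then shows the permutation is a shift. Note also that your passage ``$F(\mathcal{O}(0))\cong\mathcal{O}(0)$, hence by mutation $F(\mathcal{O}(i))\cong\mathcal{O}(i)$ for all $i$'' skips a step: mutation needs two consecutive terms, so you must first argue (e.g.\ via Hom dimensions) that $F(\mathcal{O}(1))\cong\mathcal{O}(1)$ as well.
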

Our main tool for the proof of this proposition and Proposition \ref{prop:pullbackFunctor} is a $\mathbb{Z}$-algebra generalization of the Eilenberg-Watts theorem \cite[Theorem 1.2]{ewsmith} (see Section \ref{section.ew}).

Although the autoequivalences of $D^{b}(X)$ were computed in \cite{polish}, we have not been able to determine the autoequivalences of ${\sf C}^{\theta}$, but the problem seems an interesting one (see Remark \ref{rem.ellipticaut}).  
\medskip
\begin{flushleft}
{\it Notation and conventions:}   We will let ${\sf Coh }X$ denote the category of coherent sheaves over $X$.  The symbol $\mu$ (decorated by a subscript) will denote the slope of a vector bundle, that is, the degree of the bundle divided by its rank.  Moreover 
\medskip
\begin{center} {\it ${\sf C}$ will denote a $k$-linear, $\operatorname{Hom}$-finite abelian category,}
\end{center}
\medskip
where $\operatorname{Hom}$-finite means that the $k$-dimension of all $\operatorname{Hom}$-spaces over ${\sf C}$ are finite dimensional.  Other notation will be introduced locally.
\end{flushleft}

\section{Definition and basic properties of two-periodic elliptic helices}

In this section, we recall, from \cite{CN} and \cite{morph},  the definition and basic properties of two-periodic elliptic helices over the category ${\sf Coh }X$.  The definitions are motivated by \cite[Section~7]{bp}.

\begin{definition}
An object $\cE \in {\sf C}$ is {\it elliptically exceptional} if i) $\operatorname{Ext}^j(\cE,\cE) \cong k$ for $j = 0,1$ and is zero otherwise and ii) for any $\cF \in {\sf C}$, the natural pairing $\operatorname{Hom}(\cE,\cF) \otimes \,\operatorname{Ext}^1(\cF,\cE) \to \,\operatorname{Ext}^1(\cE,\cE) = k$ is non-degenerate.
\end{definition}
It follows (see \cite[Section 6]{morph}) that a coherent sheaf $\cE$ over $X$ is elliptically exceptional in ${\sf Coh }X$ if and only if  $\cE$ is either a vector bundle with relatively prime degree and rank or $\cE$ is a skyscraper sheaf.

We recall from \cite{az2} that if $\mathcal{E}$ and $\mathcal{F}$ are objects of ${\sf C}$ such that $\operatorname{End }\mathcal{E} = k = \operatorname{End }\mathcal{F}$, one can define objects $\operatorname{Hom}(\mathcal{E}, \mathcal{F}) \otimes_{k} \mathcal{E}$ and ${}^{*}\operatorname{Hom}(\mathcal{E}, \mathcal{F}) \otimes_{k} \mathcal{F}$ of ${\sf C}$ as in \cite[Section B3]{az2}.  Moreover, given a choice of basis for $\operatorname{Hom}(\mathcal{E}, \mathcal{F})$, there are canonical maps, the evaluation map
$$
\epsilon: \operatorname{Hom}(\mathcal{E}, \mathcal{F}) \otimes_{k} \mathcal{E} \rightarrow \mathcal{F},
$$
and the coevaluation map
$$
\eta: \mathcal{E} \rightarrow {}^{*}\operatorname{Hom}(\mathcal{E}, \mathcal{F}) \otimes_{k} \mathcal{F}.
$$

\begin{definition}  \label{def:mutable}
An ordered pair $\cE,\cF$ of objects in ${\sf C}$ is said to be {\it left mutable} if $\operatorname{Ext}^j(\cE,\cF) = 0$ for $j \neq 0$ and furthermore the evaluation map
$$
\epsilon \colon \operatorname{Hom}(\cE,\cF) \otimes \cE \to \cF
$$
is surjective. In this case we define the {\it left mutation} $L_\cE \cF := \ker \epsilon$, and remark that, up to isomorphism, it does not depend on choice of basis of $\operatorname{Hom}(\mathcal{E}, \mathcal{F})$. We also say {\it $\cF$ left mutates through $\cE$ in ${\sf C}$}. The right-handed versions are defined similarly using coevaluation.
\end{definition}

\begin{definition}  \label{def:ellipticHelix}
Let $\underline{\cE} = (\cE_i)_{i \in \mathbb{Z}}$ be a sequence of elliptically exceptional objects in ${\sf Coh }X$. We say $\underline{\cE}$ is a {\it two-periodic elliptic helix } if
\begin{enumerate}
    \item for all $i<j$ we have $\operatorname{Ext}^1(\cE_i,\cE_j) = 0$ and,
    \item for all $i$, $\cE_i$ left mutates through $\cE_{i-1}$, and this left mutation is isomorphic to $\cE_{i-2}$.
\end{enumerate}
\end{definition}

\begin{remark} \label{remark.helix}
\begin{enumerate}
\item{} By \cite[Lemma 3]{rudakov}, a two-periodic elliptic helix can also be defined using right mutation.

\item{} By the remark following \cite[Question 7.1]{morph}, if $\underline{\cE}$ is a helix, then $\mathcal{E}_{i}$ is a simple bundle for all $i \in \mathbb{Z}$.

\item{} Every two-periodic elliptic helix $\underline{\cE}$ defines, by mutation, a helix in the sense of \cite[Definition 3.1]{CN} since $\operatorname{Hom}(\mathcal{E}_{i},\mathcal{E}_{i})=k$.  In particular, by \cite[Lemma 3]{rudakov}, for each $i$ there is a short exact sequence 
\begin{equation} \label{eqn.eulerelliptic}
0 \longrightarrow \mathcal{E}_{i} \longrightarrow \operatorname{Hom}(\mathcal{E}_{i+1}, \mathcal{E}_{i+2}) \otimes \mathcal{E}_{i+1} \overset{\epsilon}{\longrightarrow} \mathcal{E}_{i+2} \longrightarrow 0, 
\end{equation}
and ${}^{*}\operatorname{Hom}(\mathcal{E}_{i}, \mathcal{E}_{i+1}) \cong \operatorname{Hom}(\mathcal{E}_{i+1}, \mathcal{E}_{i+2})$.  

\item{} It follows from (\ref{eqn.eulerelliptic}) that every two-periodic elliptic helix is determined up to term-wise isomorphism by any consecutive pair of terms.  We will often use this fact in the sequel.
\end{enumerate}
\end{remark}

\section{Classification of two-periodic elliptic helices}
In this section, we determine necessary and sufficient conditions for a pair of simple vector bundles over $X$, $\mathcal{E}_{-1}$, $\mathcal{E}_{0}$, with ranks $r_{-1}$ and $r_{0}$,  degrees $d_{-1}$ and $d_{0}$, and slopes $\mu_{-1}<\mu_{0}$, to extend to a two-periodic elliptic helix. 

\subsection{Preliminaries and notation} \label{sub.notation}
Recall that, by \cite[Lemma 7.6]{morph} applied to both $(\mathcal{E}_{-1}, \mathcal{E}_{0})$ and $(\mathcal{E}_{0}^{*}, \mathcal{E}_{-1}^{*})$, if 
$$
d:=\operatorname{dim} \operatorname{Hom}(\mathcal{E}_{-1},\mathcal{E}_{0})>0
$$ 
and we let $r_{i}$, $i \in \mathbb{Z}$, be defined by 
$$
\begin{pmatrix} r_{n-1} \\ r_{n} \end{pmatrix} := \begin{pmatrix} 0 & 1 \\ -1 & d \end{pmatrix}^{n} \begin{pmatrix} r_{-1} \\ r_{0} \end{pmatrix}
$$
(see (\ref{eqn.eulerelliptic})), then the pair $\mathcal{E}_{-1}$, $\mathcal{E}_{0}$ extends to a two-periodic elliptic helix if and only if $r_{i}>0$ for all $i$.  In order to find necessary and sufficient conditions under which this occurs, we first describe the relevant linear algebra in a slightly more general context.

Suppose $d \in \mathbb{R}$ is such that $|d| \geq 2$, and $A=\begin{pmatrix} 0 & 1 \\ -1 & d \end{pmatrix}$.  Then $A^{-1}=\begin{pmatrix} d & -1 \\ 1 & 0 \end{pmatrix}$, and both $A$ and $A^{-1}$ have characteristic equation
\begin{equation} \label{eqn.char}
cp_A(\lambda) = \lambda^2-d\lambda+1=0
\end{equation}
whose (real) roots are $\alpha_{\pm}=\frac{d}{2}\pm \frac{\sqrt{d^2-4}}{2}$.  The corresponding eigenvectors of both $A$ and $A^{-1}$ are $\begin{pmatrix} 1 \\ \alpha_{+} \end{pmatrix}$ and $\begin{pmatrix} 1 \\ \alpha_{-} \end{pmatrix}$.  Motivated by (\ref{eqn.eulerelliptic}), given pairs of real numbers $(r_{-1}, d_{-1})$ and $(r_{0},d_{0})$, we define , sequences of real numbers $(r_{i})_{i \in \mathbb{Z}}$ and $(d_{i})_{i \in \mathbb{Z}}$ by the formula
$$
\begin{pmatrix} a_{n-1} \\ a_{n} \end{pmatrix} = A^{n} \begin{pmatrix} a_{-1} \\ a_{0} \end{pmatrix}
$$
where either $a_{i}=r_{i}$ for all $i$ or $a_{i}=d_{i}$ for all $i$.

\begin{itemize}
\item{} If $d=2$, there is only one eigenvalue, $\lambda = 1$, whose corresponding eigenvectors are spanned by $\begin{pmatrix} 1 \\ 1 \end{pmatrix}$.  Thus, we may write $\begin{pmatrix} r_{-1} \\ r_{0} \end{pmatrix}$ in terms of a Jordan basis of $\begin{pmatrix} 0 & 1 \\ -1 & 2 \end{pmatrix}$ as follows:
$$
\begin{pmatrix} r_{-1} \\ r_{0} \end{pmatrix} = r_{0} \begin{pmatrix} 1 \\1  \end{pmatrix} +(r_{0}-r_{-1})\begin{pmatrix} -1 \\ 0 \end{pmatrix}.
$$
In particular, for $n \geq 0$, we have
$$
\begin{pmatrix} r_{n-1} \\ r_{n} \end{pmatrix} = A^n \begin{pmatrix} r_{-1} \\ r_{0} \end{pmatrix}=\begin{pmatrix}
nr_{0}-(n-1)r_{-1} \\ (n+1)r_{0}-nr_{-1}.
\end{pmatrix}
$$
Similarly, we may write $\begin{pmatrix} r_{-1} \\ r_{0} \end{pmatrix}$ in terms of a Jordan basis of $\begin{pmatrix} 2 & -1 \\ 1 & 0 \end{pmatrix}$:
$$
\begin{pmatrix} r_{-1} \\ r_{0} \end{pmatrix} = r_{0} \begin{pmatrix} 1 \\1  \end{pmatrix} +(r_{-1}-r_{0})\begin{pmatrix} 1 \\ 0 \end{pmatrix},
$$
so that, for $n \geq 0$, we have
$$
\begin{pmatrix} r_{-(n+1)} \\ r_{-n} \end{pmatrix} = A^{-n}\begin{pmatrix} r_{-1} \\ r_{0} \end{pmatrix} = \begin{pmatrix}
-nr_{0}+(n+1)r_{-1} \\ (1-n)r_{0}+nr_{-1}.
\end{pmatrix}
$$
In particular, for all $n \in \mathbb{Z}$, we have
\begin{equation} \label{eqn.r}
r_{n}=(n+1)r_{0}-nr_{-1}.
\end{equation}

\item{} If $|d|>2$, there are two distinct one-dimensional eigenspaces and we may write
\begin{equation} \label{eqn.evectorexpand}
\begin{pmatrix} a_{-1} \\ a_{0} \end{pmatrix} = a_{+} \begin{pmatrix} 1 \\ \alpha_{+} \end{pmatrix} + a_{-}\begin{pmatrix} 1 \\ \alpha_{-} \end{pmatrix}
\end{equation}
for $a=r$ or $a=d$.  Thus, for all $n \in \mathbb{Z}$, we have
\begin{equation} \label{eqn.a}
a_{n}=\alpha_{+}^{n+1}a_{+}+\alpha_{-}^{n+1}a_{-},
\end{equation}

\end{itemize}

\subsection{Statement and proof of the classification}
In this section, we retain the notation from Section \ref{sub.notation}.  We have the following classification result.

\begin{theorem} \label{thm.elliptic2extend}
Suppose $\mathcal{E}_{-1}$ and $\mathcal{E}_{0}$ are simple vector bundles over $X$ whose slopes, $\mu_{-1}$ and $\mu_{0}$ satisfy $\mu_{-1}<\mu_{0}$.  Let $d:= \operatorname{dim }\operatorname{Hom}(\mathcal{E}_{-1}, \mathcal{E}_{0})$.  Then the ordered pair $(\mathcal{E}_{-1},\mathcal{E}_{0})$ extends to an elliptic helix of period two if and only if
\begin{itemize}
\item{} $d=2$.  In this case, $r_{-1}=r_{0}=1$ and $d_{0}=d_{-1}+2$, or

\item{} $d>2$ and
\begin{equation} \label{eqn.helixcond}
\alpha_{-} < \frac{r_{-1}}{r_{0}} < \alpha_{+}
\end{equation}
where $\alpha_{\pm}=\frac{d \pm \sqrt{d^{2}-4}}{2}$.

\end{itemize}
\end{theorem}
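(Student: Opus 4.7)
By \cite[Lemma 7.6]{morph}, recalled at the start of Section \ref{sub.notation}, the pair $(\mathcal{E}_{-1},\mathcal{E}_0)$ extends to a two-periodic elliptic helix if and only if the sequence $(r_n)_{n\in\mathbb Z}$ obtained by iterating $A = \begin{pmatrix} 0 & 1 \\ -1 & d \end{pmatrix}$ on $\cvec{r_{-1}}{r_0}$ consists of strictly positive integers. The plan is to translate this global positivity into a condition on the initial data by exploiting the eigenstructure of $A$ already developed in Section \ref{sub.notation}, splitting according to whether $d=1$, $d=2$, or $d>2$. The hypothesis $\mu_{-1}<\mu_0$, together with Riemann-Roch and the simplicity (hence stability) of the $\mathcal{E}_i$ on an elliptic curve, forces $d = \dim\Hom(\mathcal{E}_{-1},\mathcal{E}_0) = r_{-1}d_0 - r_0 d_{-1} \geq 1$; moreover $d=1$ is ruled out immediately, since in that case a direct computation gives $A^3 = -I$, whence $r_2 = -r_{-1}<0$. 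Only $d=2$ and $d>2$ therefore require genuine analysis.

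For $d=2$, formula (\ref{eqn.r}) expresses $r_n = (n+1)r_0 - n r_{-1}$, a linear function of $n$. Positivity at both tails $n \to \pm\infty$ forces the leading coefficient $r_0 - r_{-1}$ to vanish, so $r_{-1} = r_0 = r$ and the sequence is constant. Riemann-Roch then gives $2 = d = r(d_0 - d_{-1})$, leaving $r \in \{1,2\}$. The option $r=2$ would force $d_0 = d_{-1}+1$, so exactly one of $d_{-1},d_0$ is even; but elliptic exceptionality of each $\mathcal{E}_i$ requires $\gcd(r,d_i)=1$, forcing both $d_{-1}$ and $d_0$ odd. Hence $r=1$ and $d_0 = d_{-1}+2$, as claimed.

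For $d>2$, I will use the eigendecomposition (\ref{eqn.a}) to write $r_n = r_+ \alpha_+^{n+1} + r_- \alpha_-^{n+1}$, where $\alpha_\pm$ are strictly positive with $\alpha_+>1>\alpha_->0$ and $\alpha_+\alpha_- = 1$. Since both eigenvalues are positive, the sequence $(r_n)$ is everywhere positive if and only if both $r_\pm$ are positive: if, say, $r_+<0$, the $\alpha_+^{n+1}$ term dominates as $n\to+\infty$ and drives $r_n$ negative, and symmetrically $r_-<0$ forces negativity at $n\to-\infty$. The borderline values $r_\pm = 0$ cannot occur, since they would force $r_{-1}/r_0 \in \{\alpha_-,\alpha_+\}$, which are irrational for every integer $d\geq 3$ (as $d^2-4$ lies strictly between $(d-1)^2$ and $d^2$). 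Solving the $2\times 2$ linear system for $r_\pm$ in terms of $r_{-1}, r_0$ and simplifying using $\alpha_+\alpha_-=1$ converts the condition $r_\pm>0$ directly into the strict inequality $\alpha_- < r_{-1}/r_0 < \alpha_+$, which is (\ref{eqn.helixcond}).

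The only real subtlety beyond the reduction provided by Lemma 7.6 is the interaction between positivity of the real orbit and positive-integrality of the ranks. The recurrence $r_{n+1} = d r_n - r_{n-1}$ visibly preserves $\mathbb Z$ from integer initial data, and the irrationality of $\alpha_\pm$ for integer $d\geq 3$ rules out all marginal degenerations of the eigenvector expansion. Once these bookkeeping points are clear, the argument reduces to the eigenvalue analysis already performed in Section \ref{sub.notation}, combined with the small Riemann-Roch computation that fixes $r=1$ in the $d=2$ case.
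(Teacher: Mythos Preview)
Your proof is correct and follows essentially the same strategy as the paper: reduce via \cite[Lemma 7.6]{morph} to positivity of all $r_n$, then analyze case-by-case using the eigendecomposition of $A$ from Section~\ref{sub.notation}. Your execution is in fact slightly tighter than the paper's in two places---you rule out $r=2$ in the $d=2$ case by a direct parity/coprimality check on $d_{-1},d_0$ rather than by propagating along the helix to find a non-simple term, and in the $d>2$ case you phrase the equivalence ``$r_n>0$ for all $n$ $\Leftrightarrow$ $r_\pm>0$'' as a single biconditional, whereas the paper treats the two implications separately with a somewhat longer computation for the converse.
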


\begin{proof} Retain the notation from Section \ref{sub.notation}, so that $\mathcal{E}_{-1}$ and $\mathcal{E}_{0}$ have rank and degree $(r_{-1}, d_{-1})$, $(r_{0}, d_{0})$ as in Section \ref{sub.notation}.

If $d=1$ and $(\mathcal{E}_{-1}, \mathcal{E}_{0})$ extended to an elliptic helix, then we would have exact sequences $0 \rightarrow \mathcal{E}_{i-1} \rightarrow \mathcal{E}_{i} \rightarrow \mathcal{E}_{i+1} \rightarrow 0$ for $i \geq 0$ by Remark \ref{remark.helix}.  In particular, $r_{1}=r_{0}-r_{-1}$ while $r_{2}=r_{1}-r_{0}=-r_{-1}<0$, a contradiction.

Now, suppose $d=2$.  In this case, we show that if $(\mathcal{E}_{-1}, \mathcal{E}_{0})$ extends to an elliptic helix, then $r_{i}=1$ and $d_{i}=d_{i-1}+2$ for all $i$.  By (\ref{eqn.r}), $r_{n}=(n+1)r_{0}-nr_{-1}$ for all $n \in \mathbb{Z}$.  Thus, $r_{n}>0$ for all $n$ if and only if \begin{equation} \label{eqn.ineq0}
(n+1)r_{0}>nr_{-1}
\end{equation} 
for all $n$.  Rearranging this inequality for $n>0$, we find 
\begin{equation} \label{eqn.ineq1}
\frac{n+1}{n}>\frac{r_{-1}}{r_{0}}.
\end{equation}
Substituting $-n$ for $n$ in (\ref{eqn.ineq0}) yields $(-n+1)r_{0}>-nr_{-1}$.  If $n>1$, this may be rearranged to get 
\begin{equation} \label{eqn.ineq2}
\frac{r_{-1}}{r_{0}} > \frac{n-1}{n}.
\end{equation}
and so, considering both inequalities (\ref{eqn.ineq1}) and (\ref{eqn.ineq2}), we deduce $r_{0}=r_{1}$.  

Next, we note that
\begin{eqnarray*}
2 & = & \operatorname{dim}\operatorname{Hom}(\mathcal{E}_{-1},\mathcal{E}_{0}) \\
& = & d_{0}r_{-1}-d_{-1}r_{0} \\
& = & r_{0}(d_{0}-d_{-1}),
\end{eqnarray*}
so that the only possible values of rank are $1$ or $2$.  Finally, if both $r_{-1}$ and $r_{0}$ are $2$, then $d_{0}=d_{-1}+1$ and $r_{i}=2$ for all $i$.  It follows that $d_{i}=d_{i-1}+1$ for all $i$, so if there was a helix in this case, there would be a term in the helix with rank and degree both two.  Since this bundle isn't simple, we do not obtain an elliptic helix.

The converse, that if $r_{-1}=1=r_{0}$ and $d_{0}=d_{-1}+2$, then $(\mathcal{E}_{-1}, \mathcal{E}_{0})$ extends to an elliptic helix follows from the fact that $r_{i}=1$ for all $i\in \mathbb{Z}$ in this case, so that $r_{i}>0$ for all $i$.

Finally, suppose $d>2$.  By (\ref{eqn.a}) we have
$$
r_{m}=\alpha_{+}^{m+1}r_{+}+\alpha_{-}^{m+1}r_{-}
$$
for all $m \in \mathbb{Z}$.  Thus, if $r_{m}>0$ for all $m \in \mathbb{Z}$, then, since  $\alpha_{+}>1$ and $\alpha_{+}\alpha_{-}=1$, $r_{+}\alpha_{+}^{2m+2}>-r_{-}$ for all $m \in \mathbb{Z}$.  Since the left-hand side of this inequality is unbounded as $m \to \infty$, this is only possible if either $r_{+} > 0$ or $r_{+}=0$ and $r_{-}>0$.  The latter case is impossible since if $r_{+}=0$, then by (\ref{eqn.evectorexpand}), $r_{-}$ is irrational and equal to $r_{-1}$, which is an integer.  Thus, $r_{+}>0$.  Similarly, $r_{+}\alpha_{+}^{2m+2}>-r_{-}$ for all $m << 0$ implies that $r_{-}>0$ since $\alpha_{+}\alpha_{-}=1$.  We conclude that if $r_{m}>0$ for all $m \in \mathbb{Z}$, then $r_{+}>0$ and $r_{-}>0$.  Using (\ref{eqn.a}) in case $m=-1$ and $m=0$, we deduce that
\begin{equation} \label{eqn.rplus}
r_{+}=\frac{r_{0}-\alpha_{-}r_{-1}}{\alpha_{+}-\alpha_{-}}
\end{equation}
so that $r_{+}>0$ implies $r_{0}>\alpha_{-}r_{-1}$, which implies $\alpha_{+}>\frac{r_{-1}}{r_{0}}$.  Similarly, since $r_{-}>0$, we find $(\alpha_{+}-\alpha_{-})r_{-1}>r_{0}-\alpha_{-}r_{-1}$, which implies $\frac{r_{-1}}{r_{0}}>\alpha_{-}$.

Conversely, suppose
\begin{equation} \label{eqn.conbigd}
\alpha_{+} > \frac{r_{-1}}{r_{0}} > \alpha_{-}.
\end{equation}
We first claim that $r_{+}>0$ and $r_{-}>0$.  To prove that $r_{+}>0$, we know that since $\alpha_{+}r_{0}>r_{-1}$ by (\ref{eqn.conbigd}), we have
$$
\alpha_{+}(\alpha_{+}r_{+}+\alpha_{-}r_{-})> r_{+}+r_{-}
$$
by (\ref{eqn.evectorexpand}).  Simplifying this inequality yields $(\alpha_{+}^{2}-1)r_{+}>0$, whence the first claim.

Next, we prove that $r_{-}>0$.  We first note that it suffices to prove $r_{-1}>r_{+}$ since, if this were true, then the fact that $r_{-1}=r_{+}+r_{-}$ implies that $r_{-}>0$.  Thus, we prove that $r_{-1}>r_{+}$.  To prove this, we note that it suffices to prove $\alpha_{-}r_{0}>r_{+}$.  For, if this is the case, then by (\ref{eqn.conbigd}), $r_{-1}>\alpha_{-}r_{0}>r_{+}$ as desired.  Thus, to prove $r_{-}>0$, it remains to prove $\alpha_{-}r_{0}>r_{+}$.  To this end, by (\ref{eqn.evectorexpand}), we have 
$$
r_{+}=\frac{r_{0}-\alpha_{-}r_{-1}}{\alpha_{+}-\alpha_{-}}
$$
so we need to prove 
$$
(\alpha_{+}-\alpha_{-})\alpha_{-}r_{0}>r_{0}-\alpha_{-}r_{-1}.
$$
Simplifying this last inequality yields $\frac{r_{-1}}{r_{0}}>\alpha_{-}$, which holds by (\ref{eqn.conbigd}).

Thus, we have established that $r_{+}$ and $r_{-}$ are positive, and the result now follows from (\ref{eqn.a}).
\end{proof}

\begin{remark} \label{rem.invariant}
Since $\alpha_{\pm}$ are the roots of the quadratic $q(\lambda) = \lambda^2 - d \lambda +1$, the condition (\ref{eqn.helixcond}) from Theorem \ref{thm.elliptic2extend} amounts to $0 > q(\tfrac{r_{-1}}{r_0})$ which in turn can be re-written more succinctly as $dr_{-1}r_0 -(r_{-1}^2+r_{0}^{2})> 0$.  That is, the integer 
$$
D(r_{-1},r_{0}):= dr_{-1}r_{0}-r_{-1}^{2}-r_{0}^{2}
$$ 
is positive.
\end{remark}

We now show that the elliptic helices described in Theorem \ref{thm.elliptic2extend} with $d>2$ have an irrational limit slope.

\begin{lemma} \label{lemma.limit}
Let $\underline{\mathcal{E}}$ denote a two-periodic elliptic helix with $d>2$. Let $\mu_{n}$ denote the slope of $\mathcal{E}_{n}$.  Then $\lim_{n \to -\infty}\mu_{n}$ exists and is irrational.
\end{lemma}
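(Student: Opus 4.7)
The plan is to compute the limit explicitly using the eigenvalue decomposition of Section \ref{sub.notation} and then to verify its irrationality by a rationalization argument.

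For the first step, I would apply (\ref{eqn.a}) to both the rank and degree sequences, so that for every $n$,
\[
\mu_n = \frac{d_n}{r_n} = \frac{\alpha_+^{n+1} d_+ + \alpha_-^{n+1} d_-}{\alpha_+^{n+1} r_+ + \alpha_-^{n+1} r_-}.
\]
Since $0 < \alpha_- < 1 < \alpha_+$ and $\alpha_+ \alpha_- = 1$, the ratio $\alpha_+/\alpha_- = \alpha_+^{2}$ exceeds $1$, so $(\alpha_+/\alpha_-)^{n+1} \to 0$ as $n \to -\infty$. Dividing numerator and denominator by $\alpha_-^{n+1}$, and using the positivity $r_- > 0$ established in the proof of Theorem \ref{thm.elliptic2extend}, the $\alpha_+$-eigencomponent is killed in the limit and one obtains
\[
\lim_{n \to -\infty} \mu_n = \frac{d_-}{r_-}.
\]

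For the second step, solving the linear system (\ref{eqn.evectorexpand}) at $n = -1, 0$ for the eigen-coordinates yields
\[
\frac{d_-}{r_-} = \frac{d_0 - \alpha_+ d_{-1}}{r_0 - \alpha_+ r_{-1}}.
\]
Setting $s := \sqrt{d^{2}-4}$, we have $\alpha_+ = (d+s)/2$, and since $d$ is an integer with $d \geq 3$ the bounds $(d-1)^2 < d^2 - 4 < d^2$ show $s$ is irrational. Thus the numerator and denominator above may be written as $A + Bs$ and $C + Ds$ respectively, with $A, B, C, D$ explicit integers in $d_{-1}, d_0, r_{-1}, r_0, d$; a short verification gives $C + Ds = 2 s r_- \neq 0$. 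Rationalizing the denominator places the limit in $\mathbb{Q} + \mathbb{Q} s$, and it lies in $\mathbb{Q}$ if and only if the coefficient of $s$ in the rationalized expression vanishes. A direct computation reduces this condition to $d_{-1} r_0 = d_0 r_{-1}$, i.e.\ $\mu_{-1} = \mu_0$, contradicting the standing hypothesis $\mu_{-1} < \mu_0$.

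I do not anticipate a serious obstacle. The only subtleties are the nonvanishing of $r_-$, which is already in Theorem \ref{thm.elliptic2extend}, and the elementary fact that $\sqrt{d^2 - 4}$ is irrational for integer $d \geq 3$; the remainder is bookkeeping with the eigenvalue decomposition.
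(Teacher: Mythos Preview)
Your argument is correct and follows essentially the same route as the paper: both compute $\lim_{n\to-\infty}\mu_n = d_-/r_-$ via the eigen-decomposition (\ref{eqn.a}), rewrite this in terms of $r_{-1},r_0,d_{-1},d_0,d$ and $\sqrt{d^2-4}$, and conclude irrationality from the irrationality of $\sqrt{d^2-4}$. Two minor remarks: your identity should read $C+Ds = -2sr_-$ (a harmless sign slip), and your nonvanishing condition $BC-AD\neq 0$ simplifies directly to $2(d_0r_{-1}-d_{-1}r_0)=2d\neq 0$; the paper goes a bit further and records the explicit closed form (\ref{eqn.neglimit}) for $\theta$, which it needs in later sections, whereas your streamlined argument suffices for the lemma as stated.
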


\begin{proof}
By (\ref{eqn.a}), we note that since $\alpha_{+}>1>\alpha_{-}$,
$$
\lim_{n \to -\infty}\mu_{n} = \frac{d_{-}}{r_{-}}.
$$
Furthermore, we may solve for $r_{-}$ and $d_{-}$ given the ranks and degrees of $\mathcal{E}_{-1}$ and $\mathcal{E}_{0}$.  We find that
$$
d_{-}=d_{-1}(\frac{1}{2}+\frac{d}{2\sqrt{d^2-4}})-d_{0}\frac{1}{\sqrt{d^2-4}}
$$
and
$$
r_{-}=r_{-1}(\frac{1}{2}+\frac{d}{2\sqrt{d^2-4}})-r_{0}\frac{1}{\sqrt{d^2-4}}
$$
so that the negative limit slope exists and is
$$
\frac{d_{-}}{r_{-}}=\frac{d_{-1}(\frac{1}{2}+\frac{d}{2\sqrt{d^2-4}})-d_{0}\frac{1}{\sqrt{d^2-4}}}{r_{-1}(\frac{1}{2}+\frac{d}{2\sqrt{d^2-4}})-r_{0}\frac{1}{\sqrt{d^2-4}}}.
$$
A straightforward computation, using the fact that $d=d_{0}r_{-1}-d_{-1}r_{0}$, shows that
\begin{multline} \label{eqn.neglimit}
\theta(d_{-1},d_0,r_{-1},r_0) :=
\frac{d_{-}}{r_{-}}  \\
=\frac{2(r_{0}d_{0}+d_{-1}r_{-1})-d(r_{0}d_{-1}+d_{0}r_{-1})}{2(r_{0}^2+r_{-1}^2-dr_{0}r_{-1})}+\frac{d}{2(r_{0}^2+r_{-1}^2-dr_{0}r_{-1})}\sqrt{d^2-4}.
\end{multline}
By Remark \ref{rem.invariant}, the denominator is nonzero, and by hypothesis, $d>2$.  Thus, since $\sqrt{d^2-4}$ is irrational, the result follows.
\end{proof}

\begin{remark}  \label{rem.Dfunction}
In the formula for the limit slope $\theta$ in Lemma~\ref{lemma.limit}, the denominator contains the quadratic function of ranks $D(r_{-1},r_0)$ defined in Remark \ref{rem.invariant}.  Note that if the values for $\theta$ and $d$ are fixed, then so is the value of $D(r_{-1},r_0)$ since $\sqrt{d^2-4}$ is irrational.
\end{remark}

\section{A condition for ampleness}
In this section, we utilize aspects of Polishchuk's argument from \cite[Theorem 3.5]{polish} to prove that two-periodic elliptic helices are ample.  The argument we give is general enough to allow us to deduce that the helices we constructed in \cite[Theorem 7.23]{morph} are ample, as well.  Although we stated this fact in \cite{morph}, we did not include details there.

For the readers convenience, we recall the definition of ampleness.  The definition is (superficially) distinct from that in \cite{polish}, as our indexing conventions are different.  We let $\underline{\mathcal{L}} = (\mathcal{L}_{i})_{i \in \mathbb{Z}}$ denote a sequence of objects in ${\sf C}$ such that $\operatorname{Hom}(\mathcal{L}_{i},\mathcal{L}_{i})=k$.

We call $\underline{\mathcal{L}}$
\begin{itemize}
\item{} {\it projective} if for every epimorphism $f: \mathcal{M} \rightarrow \mathcal{N}$ in ${\sf C}$ there exists an integer $n$ such that $\operatorname{Hom}_{\sf C}(\mathcal{L}_{-i},f)$ is surjective for all $i>n$, and

\item{} {\it ample for ${\sf C}$} if it is projective, and if for every $\mathcal{M} \in {\sf C}$ and every $m \in \mathbb{Z}$ there exists a surjection
    $$
    \oplus_{j=1}^{s}\mathcal{L}_{-i_{j}} \rightarrow \mathcal{M}
    $$
for some $i_{1},\ldots, i_{s}$ with $i_{j} \geq m$ for all $j$.
\end{itemize}

\begin{lemma} \label{lemma.ample}
Suppose $\theta$ is irrational, and suppose $\underline{\mathcal{L}} := (\mathcal{L}_{n})_{n \in \mathbb{Z}}$ is a sequence of simple bundles over $X$.  Let $r_{n}$ and $\mu_{n}$ denote the rank and the slope of $\mathcal{L}_{n}$.  If
\begin{enumerate}
\item{} $\mu_{n} > \theta$ for all $n$,

\item{} $\lim_{n \to -\infty}\mu_{n} = \theta$ and

\item{} $\lim_{n \to -\infty} r_{n}^{2}(\mu_{n}-\theta)$ exists and is greater than $1$,
\end{enumerate}
then $\underline{\mathcal{L}}$ is ample for ${\sf C}^{\theta}$.
\end{lemma}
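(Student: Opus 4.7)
The plan is to adapt Polishchuk's argument from \cite[Theorem 3.5]{polish}, where an analogous ampleness statement is verified for his specific construction. Since $\mu_{n}>\theta$ for all $n$, each $\mathcal{L}_{n}$ lies in ${\sf C}^{\theta}$ by the description of Polishchuk's heart, so $\operatorname{Hom}$ and $\operatorname{Ext}^{1}$ in ${\sf C}^{\theta}$ can be computed from the derived category of $X$ throughout.

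For projectivity, given an epimorphism $f\colon \mathcal{M} \to \mathcal{N}$ in ${\sf C}^{\theta}$ with kernel $K$, surjectivity of $\operatorname{Hom}(\mathcal{L}_{-i}, f)$ reduces, via the long exact sequence and the cohomological dimension one of ${\sf C}^{\theta}$ (\cite[Proposition 1.4]{polish}), to showing $\operatorname{Ext}^{1}(\mathcal{L}_{-i}, K)=0$ for $i \gg 0$. The Harder-Narasimhan filtration of $K$ in ${\sf C}^{\theta}$ has finitely many semistable factors, each of slope strictly greater than $\theta$. By Serre duality on $X$ together with semistability, $\operatorname{Ext}^{1}(\mathcal{L}_{-i}, -)$ vanishes on any semistable object of slope exceeding $\mu_{-i}$. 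Since $\mu_{-i}\to \theta$ by hypothesis (2), this applies to all HN factors of $K$ once $i$ is sufficiently large.

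For ampleness, reduce to the case that $\mathcal{M}$ is semistable of slope $\nu>\theta$ by induction along its Harder-Narasimhan filtration in ${\sf C}^{\theta}$, lifting surjections using the projectivity just established. For such $\mathcal{M}$, once $i$ is large enough that $\mu_{-i}<\nu$, $\operatorname{Ext}^{1}(\mathcal{L}_{-i},\mathcal{M})$ vanishes, so Riemann-Roch on $X$ gives
$$
h_{i} := \dim \operatorname{Hom}(\mathcal{L}_{-i}, \mathcal{M}) = r_{-i}\,\operatorname{rk}\mathcal{M}\,(\nu-\mu_{-i}).
$$
Let $\mathcal{N}_{i}$ denote the cokernel in ${\sf C}^{\theta}$ of the evaluation map $\epsilon_{i}\colon \operatorname{Hom}(\mathcal{L}_{-i}, \mathcal{M})\otimes \mathcal{L}_{-i} \to \mathcal{M}$. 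The crux is to prove $\mathcal{N}_{i}=0$ for some sufficiently large $i$. Applying $\operatorname{Hom}(\mathcal{L}_{-i}, -)$ to the short exact sequences
$$
0 \to \ker \epsilon_{i} \to \operatorname{Hom}(\mathcal{L}_{-i}, \mathcal{M})\otimes \mathcal{L}_{-i} \to \operatorname{Im} \epsilon_{i} \to 0, \qquad 0 \to \operatorname{Im} \epsilon_{i} \to \mathcal{M} \to \mathcal{N}_{i} \to 0,
$$
together with Riemann-Roch estimates on each piece, yields a numerical inequality for $(\operatorname{rk}\mathcal{N}_{i}, \deg \mathcal{N}_{i})$ whose leading asymptotic term is controlled by $r_{-i}^{2}(\mu_{-i}-\theta)$; assuming $\mathcal{N}_{i}\neq 0$ contradicts this inequality precisely when $r_{-i}^{2}(\mu_{-i}-\theta)>1$, which is exactly hypothesis (3).

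The main obstacle is this last numerical step: identifying the decisive asymptotic coefficient as $r_{-i}^{2}(\mu_{-i}-\theta)$, and justifying why the strict inequality $\lim r_{n}^{2}(\mu_{n}-\theta)>1$ (rather than $\geq 1$) is what forces the cokernel to vanish. This amounts to tracking how $\operatorname{rk} \mathcal{N}_{i}$ and $\deg \mathcal{N}_{i}$ combine with $h_{i}=r_{-i}\operatorname{rk}\mathcal{M}(\nu-\mu_{-i})$, and it is here that the particular normalization built into hypothesis (3) must be used carefully.
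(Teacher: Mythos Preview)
Your overall strategy---adapting Polishchuk's argument from \cite[Theorem 3.5]{polish}---is the same as the paper's, but you are redoing far more of that argument than necessary and, as you yourself flag, you do not actually complete the decisive numerical step.

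The paper's proof is much shorter: rather than reconstructing Polishchuk's ampleness argument (Harder--Narasimhan reduction, evaluation maps, cokernel analysis), it simply observes that the proof of \cite[Theorem 3.5]{polish} reduces ampleness to two explicit numerical inequalities. Namely, one needs
\[
\mu_{n}-\theta > \frac{1}{r_{n}^{2}} \quad \text{for } n\ll 0
\]
(this is the case $r=0$ in Polishchuk's proof), and, for every integer pair $(r,d)$ with $r>0$ and $\mu:=d/r>\theta$,
\[
r_{n}^{2}(\mu_{n}-\theta) > \frac{\mu-\theta}{\mu-\mu_{n}} \quad \text{for } n\ll 0.
\]
The first is immediate from hypothesis~(3). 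For the second, hypothesis~(2) gives $\lim_{n\to -\infty}\frac{\mu-\theta}{\mu-\mu_{n}}=1$, so hypothesis~(3) again suffices. This is precisely the ``asymptotic coefficient'' you were looking for: the right-hand side of the second inequality limits to $1$, which is why the strict inequality in~(3) is exactly what is needed.

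So your proposal is not wrong, but it leaves as an ``obstacle'' the very computation that carries the proof. The fix is not to push through your cokernel analysis from scratch, but to read off these two inequalities directly from Polishchuk's proof and verify them against the hypotheses---a two-line check.
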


\begin{proof}
In what follows, we will write $d_{n}$ for the degree of $\mathcal{L}_{n}$.  By the proof of \cite[Theorem 3.5]{polish}, two inequalities must be satisfied.  First, we must have
$$
\mu_{n}-\theta > \frac{1}{r_{n}^{2}}
$$
for $n<<0$ (this is the $r=0$ case in Polishchuk's proof), and, for all integer pairs $(r,d)$ with $r>0$ and $\mu:=d/r>\theta$,
$$
r_{n}^{2}(\mu_{n}-\theta)> \frac{\mu-\theta}{\mu-\mu_{n}}
$$
for $n<<0$.  Since $\lim_{n \to -\infty} \frac{\mu-\theta}{\mu-\mu_{n}}=1$, both of these inequalities hold by our hypotheses.
\end{proof}

The hypotheses in the next lemma are purely numerical, but the notation is consistent with the $d>2$ case in Section \ref{sub.notation}.
\begin{lemma}  \label{lem:amplenessNumerics}
Let $d \in \mathbb{R}$ be such that $|d| >2$ and $\alpha_{\pm}$ be the roots of $\lambda^2 - d\lambda + 1$ indexed so $\alpha_+>1>\alpha_-$. Let $d_{\pm}, r_{\pm} \in \mathbb{R}$ and set $d_{n} = d_+ \alpha_+^{n+1} + d_-\alpha_-^{n+1}$ and $r_{n} = r_+ \alpha_+^{n+1} + r_-\alpha_-^{n+1}$ for $n \in \mathbb{Z}$. Finally, suppose $r_{n} \neq 0$ for all $n$, $\theta := \lim_{n \to -\infty} d_n/r_n$ and
$$ M = \begin{pmatrix}
    r_{-1} & d_{-1} \\ r_{0} & d_{0}
\end{pmatrix}.
$$
If $\det M > \sqrt{d^2-4}$ then $\lim_{n \to -\infty} r^2_n (d_n/r_n - \theta)$ exists and is greater than 1.
\end{lemma}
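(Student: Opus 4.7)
The plan is to compute $\lim_{n\to-\infty} r_n^2(d_n/r_n - \theta)$ explicitly in terms of the coefficients $r_\pm, d_\pm$ and then rewrite the answer in terms of $\det M$ via a linear change of basis. There is no serious obstacle; everything reduces to routine eigenvector manipulations, and the only delicate point is keeping track of which eigenvalue dominates as $n \to -\infty$.

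Because $\alpha_+ > 1 > \alpha_- > 0$, the dominant term of $d_n/r_n$ as $n \to -\infty$ comes from the $\alpha_-^{n+1}$ component, so the hypothesis that $\theta$ is a finite real number forces $r_- \neq 0$ and $\theta = d_-/r_-$. Substituting this back into $d_n - \theta r_n$ cancels the $\alpha_-^{n+1}$ contribution, leaving
\begin{equation*}
    d_n - \theta r_n \;=\; \frac{d_+ r_- - d_- r_+}{r_-}\,\alpha_+^{n+1}.
\end{equation*}
Multiplying by $r_n$ and using $\alpha_+\alpha_- = 1$ yields
\begin{equation*}
    r_n^2\bigl(d_n/r_n - \theta\bigr) \;=\; \frac{d_+ r_- - d_- r_+}{r_-}\bigl(r_+\alpha_+^{2(n+1)} + r_-\bigr),
\end{equation*}
and since $\alpha_+ > 1$ implies $\alpha_+^{2(n+1)} \to 0$ as $n \to -\infty$, the limit exists and equals $d_+ r_- - d_- r_+$.

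To match this with $\det M$, I would note the matrix identity
\begin{equation*}
    M \;=\; \begin{pmatrix} 1 & 1 \\ \alpha_+ & \alpha_- \end{pmatrix}\begin{pmatrix} r_+ & d_+ \\ r_- & d_- \end{pmatrix},
\end{equation*}
which is immediate from the defining formulas at $n = -1$ and $n = 0$. Taking determinants and using $\alpha_+ - \alpha_- = \sqrt{d^2-4}$ produces $\det M = \sqrt{d^2-4}\,(d_+ r_- - d_- r_+)$, so the limit equals $\det M/\sqrt{d^2-4}$, which exceeds $1$ exactly when $\det M > \sqrt{d^2-4}$, as required.
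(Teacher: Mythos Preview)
Your proof is correct and follows essentially the same approach as the paper's: both compute $\theta = d_-/r_-$, expand $r_n(d_n - \theta r_n)$ to find the limit equals $d_+r_- - d_-r_+$, and then use the change-of-basis matrix relating $(r_\pm,d_\pm)$ to $(r_{-1},d_{-1},r_0,d_0)$ to rewrite this as $\det M/\sqrt{d^2-4}$. The only difference is cosmetic (you order the rows of the transition matrix as $(1,1;\alpha_+,\alpha_-)$ rather than the paper's $(1,1;\alpha_-,\alpha_+)$, which introduces a harmless sign that cancels).
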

\begin{proof}
We begin by noting that since $\alpha_{+}\alpha_{-}=1$, $\lim_{n \to -\infty}\frac{d_{n}}{r_{n}}=\frac{d_{-}}{r_{-}}$.  Next, we observe that
\begin{eqnarray*}
r_{n}^{2}(d_{n}/r_{n} - \theta) & = & r_{n}(d_{n}-r_{n}\theta) \\
& = & (r_{+}\alpha_{+}^{n+1}+r_{-}\alpha_{-}^{n+1})\biggl((d_{+}\alpha_{+}^{n+1}+d_{-}\alpha_{-}^{n+1})-(r_{+}\alpha_{+}^{n+1}+r_{-}\alpha_{-}^{n+1})\frac{d_{-}}{r_{-}}\biggr) \\
& = & (r_{+}\alpha_{+}^{n+1}+r_{-}\alpha_{-}^{n+1})\biggl(d_{+}\alpha_{+}^{n+1}-\frac{r_{+}d_{-}}{r_{-}}\alpha_{+}^{n+1}\biggr) \\
& = & (r_{+}\alpha_{+}^{n+1}+r_{-}\alpha_{-}^{n+1})\alpha_{+}^{n+1}\biggl(d_{+}-\frac{r_{+}d_{-}}{r_{-}}\biggr)
\end{eqnarray*}
Thus,
$$
\lim_{n \to \infty}r_{-n}^{2}(\mu_{-n}-\theta) = r_{-}d_{+}-r_{+}d_{-} = \det D
$$
where
$$
D = \begin{pmatrix}
    r_- & d_- \\ r_{+} & d_{+}
\end{pmatrix}.
$$
Now
$$\begin{pmatrix}
    1 & 1 \\ \alpha_- & \alpha_+
\end{pmatrix} D = M
$$
so the condition $\det D >1$ just amounts to the condition
$$ \det M > \begin{vmatrix} 1 & 1 \\ \alpha_- & \alpha_+
\end{vmatrix} = \alpha_+ - \alpha_- = \sqrt{d^2 -4}.
$$

\end{proof}

\begin{prop} \label{prop.smallample}
Let $\underline{\mathcal{E}}$ be a two-periodic elliptic helix with negative limit slope $\theta$.  Then $\underline{\mathcal{E}}$ is ample for ${\sf C}^{\theta}$.
\end{prop}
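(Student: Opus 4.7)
The plan is to apply Lemma~\ref{lemma.ample} to $\underline{\mathcal{E}}$, so the task reduces to verifying the three hypotheses there for the sequence of simple bundles $\mathcal{E}_n$. The case $d=2$ reduces to classical ampleness of a degree-two line bundle on $X$ (since, as noted in the introduction, the helix then consists of tensor powers of such a bundle), so I focus on $d>2$. Lemma~\ref{lemma.limit} guarantees that $\theta$ exists and is irrational, and condition (2) of Lemma~\ref{lemma.ample} is immediate from the definition of $\theta$.

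For condition (3), I would invoke Lemma~\ref{lem:amplenessNumerics} verbatim: the only thing to check is its hypothesis $\det M > \sqrt{d^2-4}$, where $M$ is the matrix of ranks and degrees from Section~\ref{sub.notation}. Here
$$\det M = r_{-1}d_0 - r_0 d_{-1} = \dim\operatorname{Hom}(\mathcal{E}_{-1}, \mathcal{E}_0) = d,$$
the second equality being Riemann-Roch (valid because $\operatorname{Ext}^1(\mathcal{E}_{-1}, \mathcal{E}_0) = 0$ by definition of an elliptic helix). Since $d > 2$ we have $d > \sqrt{d^2-4}$, so Lemma~\ref{lem:amplenessNumerics} applies and condition (3) follows.

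Condition (1), that $\mu_n > \theta$ for every $n$, is the only hypothesis requiring a direct computation. Using the eigenvector expansions of Section~\ref{sub.notation} together with $\theta = d_-/r_-$ (valid since $\alpha_+ > 1 > \alpha_- > 0$ forces the $\alpha_+^{n+1}$ terms to vanish in the limit as $n \to -\infty$), clearing denominators yields
$$\mu_n - \theta = \frac{\alpha_+^{n+1}\,(d_+ r_- - d_- r_+)}{r_n\, r_-}.$$
The factor $d_+ r_- - d_- r_+$ equals $\det D$ in the notation of Lemma~\ref{lem:amplenessNumerics}, and the change-of-basis identity recorded in its proof gives $\det D = \det M/(\alpha_+ - \alpha_-) = d/\sqrt{d^2-4} > 0$. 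Since $\alpha_+^{n+1} > 0$, $r_n > 0$ for all $n$ (by Theorem~\ref{thm.elliptic2extend}, as $\underline{\mathcal{E}}$ is a helix), and $r_- > 0$ (established inside the proof of Theorem~\ref{thm.elliptic2extend}), one concludes $\mu_n > \theta$.

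I expect the main obstacle to be mostly bookkeeping: carefully matching the abstract sequences $(r_n), (d_n)$ and eigenvector coefficients $r_\pm, d_\pm$ of Section~\ref{sub.notation} to the geometric data of the helix, and extracting the positivity of $r_\pm$ from the interior of the proof of Theorem~\ref{thm.elliptic2extend} rather than from its statement. All the substantive analytic content has already been absorbed into Lemmas~\ref{lemma.ample} and~\ref{lem:amplenessNumerics}.
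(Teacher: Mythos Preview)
Your proof is correct and follows the same approach as the paper, namely verifying $\det M = d > \sqrt{d^2-4}$ and invoking Lemma~\ref{lem:amplenessNumerics} (together with Lemma~\ref{lemma.ample}). The paper's one-line proof leaves conditions (1) and (2) of Lemma~\ref{lemma.ample} implicit, so your explicit verification of condition~(1) via the eigenvector expansion and positivity of $r_-$, $\det D$ is a useful addition rather than a deviation.
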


\begin{proof}
Since $\operatorname{det }M = \operatorname{det }\begin{pmatrix}
    r_{-1} & d_{-1} \\ r_{0} & d_{0}
\end{pmatrix} =d> \sqrt{d^{2}-4}$, this follows immediately from Lemma \ref{lem:amplenessNumerics}.
\end{proof}

\begin{proposition}
The three-periodic elliptic helices $(\mathcal{E}_{n})_{n \in \mathbb{Z}}$ constructed in \cite[Theorem 7.23]{morph} are ample.
\end{proposition}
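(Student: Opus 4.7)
The plan is to follow the strategy of Proposition~\ref{prop.smallample} and reduce ampleness to the numerical inequality of Lemma~\ref{lem:amplenessNumerics}. First, I would extract from \cite[Theorem 7.23]{morph} the explicit ranks $r_n$ and degrees $d_n$ of the helix members $\mathcal{E}_n$, together with the integer $d$ governing the mutation recurrence linking consecutive terms. For a three-periodic elliptic helix of the type constructed there, the recurrence should again take the form $a_{n+1} = d\, a_n - a_{n-1}$ with $|d| > 2$, so that $r_n$ and $d_n$ admit expansions $a_n = a_+\alpha_+^{n+1} + a_-\alpha_-^{n+1}$, where $\alpha_\pm$ are the roots of $\lambda^2 - d\lambda + 1$, exactly as in Section~\ref{sub.notation}.

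Having set up this numerical framework, I would verify the three hypotheses of Lemma~\ref{lemma.ample} for $\underline{\mathcal{E}}$ with negative limit slope $\theta := \lim_{n \to -\infty} \mu_n = d_-/r_-$. Hypotheses (1) ($\mu_n > \theta$) and (2) ($\mu_n \to \theta$ as $n \to -\infty$) follow from the closed form for $\mu_n$ and the relative sizes $\alpha_+ > 1 > \alpha_-$, together with the positivity of $r_\pm$ and $d_+$ read off from the construction. The irrationality of $\theta$ follows, as in Lemma~\ref{lemma.limit}, from the appearance of $\sqrt{d^2-4}$ in the closed-form analogue of (\ref{eqn.neglimit}). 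Hypothesis (3) is the substantive one: by Lemma~\ref{lem:amplenessNumerics} it suffices to verify $\det M > \sqrt{d^2 - 4}$ for $M = \begin{pmatrix} r_{-1} & d_{-1} \\ r_0 & d_0 \end{pmatrix}$, a direct computation from the explicit initial data in \cite[Theorem 7.23]{morph}. In the two-periodic analogue (Proposition~\ref{prop.smallample}) we found $\det M = d$, so the inequality was automatic; here the analogous calculation should produce a determinant large enough to clear $\sqrt{d^2 - 4}$, perhaps with a small numerical estimate.

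The main obstacle is confirming that the three-periodic helix in \cite[Theorem 7.23]{morph} does in fact have ranks and degrees obeying a single second-order recurrence of the form above. A priori three-periodicity relates $\mathcal{E}_{n+3}$ to $\mathcal{E}_n$ rather than adjacent terms, so two possibilities arise: either the mutation structure already forces a second-order recurrence on consecutive terms (as is often the case for concretely constructed helices over elliptic curves, where the Serre-like shift at period three factors through the two-term mutation relation), or one must restrict to a subsequence indexed by a fixed residue class modulo $3$ and apply the ampleness criterion to each of the three such subsequences separately, then combine them using the definition of ampleness. Either way, once the correct numerical recurrence is in hand, the argument reduces to a finite check on the initial data, completing the proof.
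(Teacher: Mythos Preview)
Your overall strategy---verify the hypotheses of Lemma~\ref{lemma.ample} via Lemma~\ref{lem:amplenessNumerics}---is exactly what the paper does, but your diagnosis of the ``main obstacle'' misses the actual mechanism, and neither of your two proposed resolutions is what works. The three-periodic helix does \emph{not} satisfy a second-order recurrence on consecutive terms: the mutation relation gives the genuinely third-order recurrence $a_{n+1} = d a_n - d a_{n-1} + a_{n-2}$, with characteristic polynomial
\[
\lambda^3 - d\lambda^2 + d\lambda - 1 = (\lambda - 1)(\lambda^2 - (d-1)\lambda + 1).
\]
So a priori $r_n$ and $d_n$ are linear combinations of $1$, $\alpha_+^n$, $\alpha_-^n$, where $\alpha_\pm$ are the roots of $\lambda^2 - (d-1)\lambda + 1$. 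The constant term would obstruct any direct use of Lemma~\ref{lem:amplenessNumerics}.

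The key step you are missing is that for the \emph{specific} initial data of \cite[Theorem~7.23]{morph}---namely $(d_0,r_0)=(0,1)$, $(d_1,r_1)=(d,1)$, $(d_2,r_2)=(d^2-d,d-2)$, from which one computes $(d_{-1},r_{-1})=(-d,d-2)$ and $(d_{-2},r_{-2})=(-d^2+d,d^2-3d+1)$---the coefficient of the $\lambda=1$ eigenvector vanishes for both the rank and degree sequences. The paper verifies this by checking that the vectors $(d_0,d_{-1},d_{-2})$ and $(r_0,r_{-1},r_{-2})$ both lie in the span of $(1,\alpha_-,\alpha_-^2)$ and $(1,\alpha_+,\alpha_+^2)$ via a cross-product computation. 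Only after this reduction does Lemma~\ref{lem:amplenessNumerics} apply, now with $d$ replaced by $d-1$, and the final inequality to check is $\det\begin{pmatrix} r_{-1} & d_{-1}\\ r_0 & d_0\end{pmatrix} = d > \sqrt{(d-1)^2-4}$, which is elementary. Your alternative of passing to residue classes modulo $3$ is not needed and would not sidestep the issue cleanly, since Lemma~\ref{lem:amplenessNumerics} still demands a two-term geometric expansion.
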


\begin{proof}

Note first that the formulas for $r_{n}$ and $d_{n}$ given in \cite[Lemma 7.18]{morph} are for $n \geq 0$, whereas we are interested in behavior for $n<<0$.  Now, for all $n$, we have
$$
a_{n+1}=da_{n}-da_{n-1}+a_{n-2}
$$
for $a_{j}=r_{j}$ or $a_{j}=d_{j}$.  It follows that
$$
\begin{pmatrix} d & -d & 1 \\ 1 & 0 & 0 \\ 0 & 1 & 0 \end{pmatrix} \begin{pmatrix} a_{n-1} \\ a_{n} \\ a_{n+1} \end{pmatrix} = \begin{pmatrix} a_{n-2} \\ a_{n-1} \\ a_{n} \end{pmatrix}.
$$
Note that the characteristic polynomial this time is
$$\lambda^3 - d \lambda^2 + d\lambda -1 = (\lambda -1) (\lambda^2 -(d-1)\lambda + 1).$$
Let $\alpha_{\pm}$ be the roots of $\lambda^2 - (d-1) \lambda + 1$ indexed so $\alpha_+ > 1 > \alpha_-$ so that the sequences of ranks and degrees are linear combinations of the geometric series $1, \alpha_+^n, \alpha_-^n$.
Note first that $(d_{0},r_{0})=(0, 1)$, $(d_{1}, r_{1})=(d,1)$ and $(d_{2},r_{2})=(d^{2}-d, d-2)$.  We deduce that $(d_{-1}, r_{-1})=(-d, d-2)$ and $(d_{-2},r_{-2})=(-d^{2}+d, d^{2}-3d+1)$.

Solving the initial value problem, we find that the sequences of $d_{-n}$ and $r_{-n}$ are actually only linear combinations of $\alpha_-^n$ and $\alpha_+^n$. Indeed, we have parallel cross products
$$
\begin{pmatrix}
    1 \\ \alpha_- \\ \alpha_-^2
\end{pmatrix} \times
\begin{pmatrix}
    1 \\ \alpha_+ \\ \alpha_+^2
\end{pmatrix} =
(\alpha_+-\alpha_-)\begin{pmatrix}
    1 \\ 1-d \\ 1
\end{pmatrix}, \quad
\begin{pmatrix}
    d_{0} \\ d_{-1} \\ d_{-2}
\end{pmatrix} \times
\begin{pmatrix}
    r_{0} \\ r_{-1} \\ r_{-2}
\end{pmatrix} =
d\begin{pmatrix}
    1 \\ 1-d \\ 1
\end{pmatrix}
$$

Hence, we may use Lemma~\ref{lem:amplenessNumerics} to verify the hypotheses of Lemma~\ref{lemma.ample}. We are done on noting
$$ \det \begin{pmatrix}
    r_{-1} & d_{-1} \\ r_{0} & d_{0}
\end{pmatrix} = \begin{vmatrix}
    d-2 & -d \\ 1 & 0
\end{vmatrix} = d > \sqrt{(d-1)^2 - 4}.
$$
\end{proof}

\section{Helix operations and their numerical incarnations} \label{section.helixops}
In this section, we consider operations on a helix $\underline{\mathcal{E}}$, and how it affects the numerical data $d_i = \deg \mathcal{E}_i, \ r_i = \rk \mathcal{E}_i, i = -1,0$. Of particular interest is the question, how does the negative limit slope function $\theta(d_{-1},d_0,r_{-1},r_0)$ in (\ref{eqn.neglimit}) change with the corresponding changes to $d_{-1},d_0,r_{-1},r_0$?

We can shift the helix $\underline{\mathcal{E}}$ to give $\underline{\mathcal{F}}$ with $\mathcal{F}_i = \mathcal{E}_{i+1}$. Of course, the corresponding values of $d = \dim \Hom(\mathcal{F}_{-1}, \mathcal{F}_0)$ and negative limit slope $\theta$ for $\underline{\mathcal{F}}$ are the same as those for $\underline{\mathcal{E}}$. A direct computation proves more generally the numerical version below.
\begin{proposition}  \label{prop.helixShift}
Let $d_{-1},d_0,r_{-1},r_0 \in \mathbb{N}$. Define
$$
\begin{pmatrix}
    d'_{-1} \\ d'_0
\end{pmatrix}
 = A
\begin{pmatrix}
    d_{-1} \\ d_0
\end{pmatrix}, \quad
\begin{pmatrix}
    r'_{-1} \\ r'_0
\end{pmatrix}
 = A
\begin{pmatrix}
    r_{-1} \\ r_0
\end{pmatrix},
$$
where $A=\begin{pmatrix} 0 & 1 \\ -1 & d \end{pmatrix}$ as in Section \ref{sub.notation}.  Then $\theta(d'_{-1},d'_0,r'_{-1},r'_0) = \theta(d_{-1},d_0,r_{-1},r_0)$ and
$$
\begin{vmatrix}
    r'_{-1} & d'_{-1} \\ r'_0 & d'_0
\end{vmatrix} =
\begin{vmatrix}
    r_{-1} & d_{-1} \\ r_0 & d_0
\end{vmatrix}.
$$
\end{proposition}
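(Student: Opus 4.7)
The plan is to handle the two claims separately, each by a short linear-algebra argument that avoids any heavy manipulation of the closed form (\ref{eqn.neglimit}). For the determinant identity, packaging the four seed values into a matrix gives
$$
\begin{pmatrix} r'_{-1} & d'_{-1} \\ r'_0 & d'_0 \end{pmatrix} = A \begin{pmatrix} r_{-1} & d_{-1} \\ r_0 & d_0 \end{pmatrix},
$$
and since $\det A = 0 \cdot d - 1 \cdot (-1) = 1$, taking determinants of both sides yields the claim immediately.

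For the invariance of $\theta$, the cleanest route is via sequences. Extend the primed seeds $(r'_{-1}, r'_0)$ and $(d'_{-1}, d'_0)$ to bi-infinite sequences $(r'_n)_{n \in \mathbb{Z}}$ and $(d'_n)_{n \in \mathbb{Z}}$ using the iteration by $A^n$ from Section \ref{sub.notation}. Because $(r'_{-1}, r'_0)^T = A(r_{-1}, r_0)^T$ and $(d'_{-1}, d'_0)^T = A(d_{-1}, d_0)^T$, uniqueness of the $A$-recursion forces $r'_n = r_{n+1}$ and $d'_n = d_{n+1}$ for all $n \in \mathbb{Z}$. Consequently $\lim_{n \to -\infty} d'_n/r'_n = \lim_{n \to -\infty} d_n/r_n$, and Lemma \ref{lemma.limit} identifies each of these limits with the closed-form expression (\ref{eqn.neglimit}) evaluated on the respective seed, forcing the two expressions to agree.

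The only mild subtlety is that Lemma \ref{lemma.limit} is proved under the helix hypotheses ($d > 2$ together with (\ref{eqn.helixcond})); to cover all parameter values at once, one observes that the asserted equality is a rational identity in $d_{-1}, d_0, r_{-1}, r_0$ which holds on a Zariski-dense subset, and so holds wherever (\ref{eqn.neglimit}) makes sense. Alternatively, a direct verification splits into checking that $D(r_{-1}, r_0) = dr_{-1}r_0 - r_{-1}^2 - r_0^2$ and the numerator $2(r_0 d_0 + d_{-1}r_{-1}) - d(r_0 d_{-1} + d_0 r_{-1})$ are each invariant under the substitution sending $(r_{-1}, r_0, d_{-1}, d_0)$ to $(r_0, dr_0 - r_{-1}, d_0, dd_0 - d_{-1})$; both reduce to short polynomial expansions. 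I anticipate no real obstacle, as the paper itself flags this result as a direct computation.
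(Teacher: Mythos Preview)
Your proposal is correct. The paper itself offers no proof beyond the single sentence ``A direct computation proves more generally the numerical version below,'' so your alternative (b) --- checking directly that both $D(r_{-1},r_0)$ and the numerator in (\ref{eqn.neglimit}) are invariant under $(r_{-1},r_0,d_{-1},d_0)\mapsto(r_0,dr_0-r_{-1},d_0,dd_0-d_{-1})$ --- is exactly what the paper intends, and your determinant argument via $\det A = 1$ is the natural one-line verification of the second claim.

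Your primary route through limits is a genuinely different and more conceptual argument: rather than expanding (\ref{eqn.neglimit}), you recognise the primed sequence as a shift of the original and invoke the limit interpretation of $\theta$. This has the advantage of explaining \emph{why} the identity holds (shifting a sequence does not change its limit) rather than merely verifying it. The cost is the patch you already identified: Lemma~\ref{lemma.limit} is stated under helix hypotheses, so one must either note that its proof is purely numerical and applies whenever $r_- \neq 0$, or pass through the Zariski-density argument (which, since $d$ is preserved and $\sqrt{d^2-4}$ is irrational for $d>2$, reduces to two separate rational-function identities for the rational and irrational parts of $\theta$). Either way there is no obstacle, and the direct computation in (b) sidesteps this entirely.
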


We can also tensor a helix by a line bundle $\cL$, say of degree $a$. Clearly we have $\Hom(\cL \otimes \cE_{-1},\cL \otimes \cE_0) \simeq \Hom(\cE_{-1},\cE_0)$ and the new negative limit slope has increased by the integer $a$. The numerical version of this is the following.

\begin{proposition}  \label{prop.twistHelix}
Let $\cvecr\in \mathbb{Z}_{>0}^2$ and $\cvecd,\cvecdp \in \mathbb{Z}^2$. Then
$$
\begin{vmatrix}
    r_{-1} & d'_{-1} \\ r_0 & d'_0
\end{vmatrix} =
\begin{vmatrix}
    r_{-1} & d_{-1} \\ r_0 & d_0
\end{vmatrix}
$$
iff there is $a \in \mathbb{Q}$ such that $\cvecdp = \cvecd + a \cvecr$ in which case
$$
\theta(d'_{-1},d'_0,r_{-1},r_0) = a + \theta(\seed).
$$
\end{proposition}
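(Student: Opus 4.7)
The proof is a short exercise in $2\times 2$ linear algebra followed by careful substitution into formula~(\ref{eqn.neglimit}). For the biconditional, I rewrite the determinant equality $r_{-1} d'_0 - r_0 d'_{-1} = r_{-1} d_0 - r_0 d_{-1}$ as
\begin{equation*}
r_{-1}(d'_0 - d_0) = r_0(d'_{-1} - d_{-1}),
\end{equation*}
which says the vector $\cvecdp - \cvecd$ is proportional to $\cvecr$ in $\mathbb{Q}^2$. Since $r_0 > 0$, setting $a := (d'_0 - d_0)/r_0 \in \mathbb{Q}$ and reading off $d'_{-1} - d_{-1} = a r_{-1}$ from the displayed equation yields the ``only if'' direction. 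The converse is immediate: substituting $\cvecdp = \cvecd + a\cvecr$ corresponds to adding $a$ times the first column of the $2\times 2$ matrix to its second column, a determinant-preserving operation.

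For the slope identity, the key preliminary observation is that the scalar $d = d_0 r_{-1} - d_{-1} r_0 = \seedmatrix$ appearing throughout formula~(\ref{eqn.neglimit}) is itself the quantity preserved under $\cvecd \mapsto \cvecd + a\cvecr$. Hence formula~(\ref{eqn.neglimit}) continues to apply with the same value of $d$, and in particular the irrational summand $\frac{d}{2(r_0^2 + r_{-1}^2 - d r_0 r_{-1})}\sqrt{d^2-4}$, which depends only on $d$ and the ranks, is unchanged.

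It then remains to verify that the rational summand increases by exactly $a$. Expanding the numerator of the rational part of~(\ref{eqn.neglimit}) after the substitution $d'_i = d_i + a r_i$ and collecting terms gives
\begin{equation*}
2(r_0 d'_0 + d'_{-1} r_{-1}) - d(r_0 d'_{-1} + d'_0 r_{-1}) = \bigl[2(r_0 d_0 + d_{-1} r_{-1}) - d(r_0 d_{-1} + d_0 r_{-1})\bigr] + 2a(r_0^2 + r_{-1}^2 - d r_0 r_{-1}),
\end{equation*}
and dividing by the denominator $2(r_0^2 + r_{-1}^2 - d r_0 r_{-1})$, which is nonzero by Remark~\ref{rem.invariant}, produces precisely $\theta(\seed) + a$. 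The only mild obstacle is this bookkeeping cancellation, but once one spots that the cross-terms involving $a$ assemble into $2a$ times the denominator, the result drops out.
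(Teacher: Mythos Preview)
Your proof is correct and is exactly the kind of direct computation the paper has in mind; the paper does not spell out a proof for this proposition, merely framing it as the numerical shadow of tensoring a helix by a line bundle. One small quibble: your appeal to Remark~\ref{rem.invariant} for nonvanishing of the denominator tacitly assumes the helix inequality $D>0$, whereas the proposition is stated for arbitrary $\cvecr,\cvecd$; strictly speaking the nonvanishing is part of the implicit hypothesis that $\theta$ is defined at all.
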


The final operation we consider on a helix is taking its dual $\underline{\cE}^*$ (and reversing the order of the sequence so its $n$-th term is $\cE^*_{-n}$). Clearly, the dimension $d$ of the Hom-spaces remains the same, as does the quadratic function on ranks $D(r_{-1},r_0)$ defined in Remark~\ref{rem.invariant}. However, the negative limit slope changes in a rather interesting way as the following proposition shows.

\begin{proposition}   \label{prop.dualHelix}
Let $d_{-1},d_0 \in \mathbb{Z}^2$ and $r_{-1},r_0$ be positive integers and
$d = \begin{vmatrix}
    r_{-1} & d_{-1} \\ r_0 & d_0
\end{vmatrix}$ as usual. If
$\theta(\seed) = a + b\sqrt{d^2 - 4}$ with $a, b \in \mathbb{Q}$, then $\theta(-d_0,-d_{-1},r_0,r_{-1}) = -a + b \sqrt{d^2 - 4}$
\end{proposition}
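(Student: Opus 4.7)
The proof will be a direct computation using the explicit formula for $\theta$ given in equation (\ref{eqn.neglimit}) of Lemma \ref{lemma.limit}. Write $\seedp = -d_0, -d_{-1}, r_0, r_{-1}$ for the tuple of numerical data of the dual helix. The first step is to verify the invariants. The determinant expression defining $d$ satisfies
$$
\begin{vmatrix} r'_{-1} & d'_{-1} \\ r'_0 & d'_0 \end{vmatrix}
= \begin{vmatrix} r_0 & -d_0 \\ r_{-1} & -d_{-1} \end{vmatrix}
= -r_0 d_{-1} + r_{-1} d_0 = \begin{vmatrix} r_{-1} & d_{-1} \\ r_0 & d_0 \end{vmatrix} = d,
$$
so the quantity $d$ appearing in $\sqrt{d^2-4}$ is unchanged. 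Similarly, the quadratic $D(r_{-1},r_0) = d r_{-1} r_0 - r_{-1}^2 - r_0^2$ of Remark~\ref{rem.invariant} is symmetric in $r_{-1}, r_0$, so swapping these values leaves the denominator $2(r_0^2 + r_{-1}^2 - d r_0 r_{-1})$ in (\ref{eqn.neglimit}) invariant.

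The second step is to compare numerators. The coefficient of $\sqrt{d^2-4}$ in (\ref{eqn.neglimit}) is $\frac{d}{2(r_0^2 + r_{-1}^2 - dr_0 r_{-1})}$, which depends only on $d$ and the (invariant) denominator, hence equals $b$ for both helices. For the rational part, substituting $\seedp$ into the numerator yields
$$
2\bigl(r_{-1}(-d_{-1}) + (-d_0) r_0\bigr) - d \bigl(r_{-1}(-d_0) + (-d_{-1}) r_0\bigr)
= -\bigl[ 2(r_0 d_0 + d_{-1} r_{-1}) - d(r_0 d_{-1} + d_0 r_{-1}) \bigr],
$$
which is exactly the negative of the original numerator. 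Dividing by the unchanged denominator gives rational part $-a$.

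Combining these observations yields $\theta(\seedp) = -a + b\sqrt{d^2-4}$, as required. There is no real obstacle here beyond bookkeeping signs in the substitution; the content is that the rational part of $\theta$ is built from the skew-symmetric-looking expression $2(r_0 d_0 + d_{-1} r_{-1}) - d(r_0 d_{-1} + d_0 r_{-1})$ whose sign is reversed by the dualizing substitution, while the $\sqrt{d^2-4}$ coefficient is built from the symmetric invariants $d$ and $D(r_{-1},r_0)$.
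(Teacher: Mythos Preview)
Your proof is correct and is exactly the direct computation the paper has in mind; the paper states the proposition without proof (as with the other results in Section~\ref{section.helixops}), treating the verification from the explicit formula~(\ref{eqn.neglimit}) as routine. Your write-up supplies precisely those details: the invariance of $d$ and $D(r_{-1},r_0)$ under the dualizing substitution, and the sign reversal in the numerator of the rational part.
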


\section{Helices with given $d$ and negative limit slope $\theta$} \label{section.diffcovers}
Suppose we fix a positive integer $d$ and slope $\theta\in \mathbb{Q} + \mathbb{Q}\sqrt{d^2-4}$. Consider the question of classifying ``double covers'' $f \colon {\sf C}^{\theta} \to \mathbb{P}^1_d$.  A natural place to start is to classify all helices $\underline{\cE}$ with negative limit slope $\theta$ and consecutive Hom spaces $d$-dimensional up to numerical class. In this section, we give a version of this numerical classification for some easy cases.

We fix a positive integer $d$ and start with a ``numerical seed'' consisting of two vectors $\cvecr \in \mathbb{Z}^2_{>0},  \cvecd \in \mathbb{Z}^2$ such that $\begin{vmatrix}
    r_{-1} & d_{-1} \\ r_0 & d_0
\end{vmatrix} = d>2$. Note that helices with $d=2$ were numerically classified already in Theorem~\ref{thm.elliptic2extend} so we exclude this case. We say $\cvecr,\cvecd$ are {\em relatively prime} if $1 = \gcd(r_{-1},d_{-1}) = \gcd(r_0,d_0)$. For the moment, we do not fix $\theta$, but we do fix \begin{equation}\label{eq.solveDequals}
D := D(r_{-1},r_0) = dr_{-1}r_0 - r_{-1}^2 - r^2_0 >0
\end{equation}
which would be fixed if $\theta$ were.

We extend $\cvecr,\cvecd$ to sequences as in Section \ref{sub.notation} by
$$
\cvec{r_{n-1}}{r_n} = A^n \cvecr, \quad \cvec{d_{n-1}}{d_n} = A^n \cvecd \quad \text{for }\ n \in \mathbb{Z},
$$
where $A=\begin{pmatrix} 0 & 1 \\ -1 & d \end{pmatrix}$.  Recall from Theorem~\ref{thm.elliptic2extend} that the condition $D>0$ is equivalent to the fact that $(r_{i})_{i \in \mathbb{Z}}$ is a sequence of positive integers and hence, that $(r_{i})_{i \in \mathbb{Z}}, (d_{i})_{i \in \mathbb{Z}}$ are the sequences of ranks and degrees of a 2-periodic elliptic helix if $\cvecr, \cvecd$ are relatively prime.

We first consider the Diophantine equation (\ref{eq.solveDequals}) for fixed $D$. One might try to solve this using techniques in the theory of Pell-like equations as described in the introduction, but here, the geometry gives us some extra symmetry. Indeed, we know from Proposition~\ref{prop.helixShift}, that shifting a helix does not change the quantity $D$, so any solution $\cvecr$ yields the infinite family of solutions $A^n\cvecr, n \in \mathbb{Z}$. Amongst all these ``shifted'' solutions, we can more or less select a unique one as follows. Since we have assumed $d>2$ we know from Equation~(\ref{eqn.a}) that
$$r_n = r_+ \alpha_+^{n+1} + r_- \alpha_-^{n+1}$$
where $\alpha_{\pm}$ are the roots of the characteristic polynomial $cp_A(\lambda) = \lambda^2 - d\lambda +1$ and $r_{\pm}$ are positive reals. This is a concave up function of $n$ so, at the price of shifting the sequence, we can normalise until $r_0$ is a minimum. Note that if there is another minimum, then it has to be either $r_{-1}$ or $r_1$. We thus consider the {\em minimality criterion} $r_0 \leq r_{-1}, r_1 = dr_0 - r_{-1}$ which can be more simply expressed as
\begin{equation}   \label{eq.minimalr0}
r_0 \leq r_{-1} \leq (d-1) r_0.
\end{equation}
Note that this minimality criterion on $\cvecr$ ensures that $D(r_{-1},r_0) >0$. It can also be written as
\begin{equation}  \label{eq.minimalV2}
1 \leq \frac{r_{-1}}{r_0}  \leq d-1
\end{equation}
This can be used to limit solutions to the Diophantine equation~(\ref{eq.solveDequals}).
\begin{lemma}  \label{lem.smallDcase}
Suppose that $0<D < 4(d-2)$. If $\cvecr\in \mathbb{Z}_{>0}^2$ satisfies
$$dr_0r_{-1} - r_{-1}^2-r_0^2 = D.$$
Then there are integers $n, y$ such that $\cvecr = A^n \cvec{y}{1}$ where $dy - y^2 - 1 = D$. Furthermore, $D \geq d-2$.
\end{lemma}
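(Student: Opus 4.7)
The plan is to normalize an arbitrary solution via shifting (invoking the invariance of $D$ from Proposition~\ref{prop.helixShift}) so that $r_0$ is the minimum of the sequence $(r_n)_{n\in\mathbb{Z}}$, and then to exploit the resulting range constraint on $r_{-1}/r_0$ to force $r_0=1$.

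First I would extend a putative solution $\cvecr$ to the full sequence $(r_n)_{n\in\mathbb{Z}}$ by multiplication by powers of $A$, as in Section~\ref{sub.notation}. The hypothesis $D>0$ together with Theorem~\ref{thm.elliptic2extend} and the expansion~(\ref{eqn.a}) ensures that all $r_n$ are positive integers, with $r_n = r_+\alpha_+^{n+1}+r_-\alpha_-^{n+1}$ where $r_\pm>0$; in particular $r_n\to\infty$ as $n\to\pm\infty$, so the sequence achieves a minimum. Since Proposition~\ref{prop.helixShift} shows that shifting by $A^n$ preserves $D$, I may replace $\cvecr$ by its appropriate shift so that the minimum occurs at index $0$. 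The minimality inequalities $r_0\leq r_{-1}$ and $r_0\leq r_1 = dr_0-r_{-1}$ are then exactly the normalization~(\ref{eq.minimalr0}), namely $r_0\leq r_{-1}\leq (d-1)r_0$.

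Next I would analyze the quadratic form under this range constraint. Setting $t=r_{-1}/r_0\in[1,d-1]$, the defining equation becomes
$$\frac{D}{r_0^2} = dt - t^2 - 1,$$
and this concave function attains its minimum on $[1,d-1]$ at either endpoint, with value $d-2$ in both cases. Consequently $D\geq(d-2)r_0^2$. Combining this with the hypothesis $D<4(d-2)$ gives $r_0^2<4$, forcing $r_0=1$ since $r_0$ is a positive integer. Setting $y:=r_{-1}$, the equation reads $dy-y^2-1=D$, and the claimed description $\cvecr=A^n\cvec{y}{1}$ is just the undoing of the shift. The final assertion $D\geq d-2$ is immediate from $D\geq (d-2)r_0^2$ with $r_0=1$.

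I do not anticipate a substantive obstacle: the core insight is that shift invariance of $D$ (Proposition~\ref{prop.helixShift}) places every solution into the regime~(\ref{eq.minimalr0}), where the one-variable concave function $dt-t^2-1$ yields the uniform lower bound $D\geq(d-2)r_0^2$. The only step requiring any real care is verifying that the minimum of the integer sequence $(r_n)$ is actually attained, but that follows from convexity of the exponential expression together with positivity of $r_{\pm}$, which in turn is guaranteed by $D>0$.
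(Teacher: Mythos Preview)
Your proof is correct and follows essentially the same route as the paper's: shift so that $r_0$ is minimal (giving the range $r_0\le r_{-1}\le (d-1)r_0$), observe that $D/r_0^2 = -cp_A(r_{-1}/r_0)\ge d-2$ on this interval, and conclude $r_0=1$ from $D<4(d-2)$. Your write-up is slightly more explicit about why the minimum of $(r_n)$ is attained, but the substance is identical.
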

\begin{proof}
Since shifts correspond to multiplication by $A$, we may assume $r_0$ satisfies the minimality criterion of (\ref{eq.minimalV2}). We wish to establish the first assertion by proving that $r_0 = 1$. Note
\begin{equation}  \label{eq.dividedD}
\frac{D}{r_0^2} = -\left(\frac{r_{-1}}{r_0}\right)^2 + d \left(\frac{r_{-1}}{r_0}\right) - 1 =-cp_A\left(\frac{r_{-1}}{r_0}\right)
\end{equation}
and that on the domain $[1,d-1]$ given by (\ref{eq.minimalV2}), $-cp_A(\lambda)$ has its minima at the endpoints.  In addition, this minimum value is $d-2$. Thus
\begin{equation} \label{eq.Dinequality}
    \frac{D}{r_0^2} \geq d-2
\end{equation}
This firstly gives $D \geq d-2$. Furthermore, if $r_0 \geq 2$ then $D \geq 4(d-2)$ so we must have $r_0 = 1$.
\end{proof}

This suggests that we have a lot more control when $D$ is small relative to $d$. We will mainly be interested in the case where $D < 4(d-2)$ so $r_0 = 1$ is fixed.

Let us suppose now that $\cvecr \in \mathbb{Z}_{>0}^2$ is given and pose the question, when can you find a relatively prime $\cvecd \in \mathbb{Z}^2$ such that
$
\begin{vmatrix}
    r_{-1} & d_{-1} \\ r_0 & d_0
\end{vmatrix} = d
$. The following gives a necessary criterion.
\begin{proposition}  \label{prop.necCriterionExistd}
Let $\cvecr \in \mathbb{Z}_{>0}^2, \cvecd \in \mathbb{Z}^2$ be relatively prime and $d = \begin{vmatrix}
    r_{-1} & d_{-1} \\ r_0 & d_0
\end{vmatrix}$.
Then $\gcd(r_{-1},r_0) = \gcd(r_{-1},d) = \gcd(r_0,d)$.
\end{proposition}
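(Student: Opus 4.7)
The plan is to prove the three gcds coincide by a straightforward divisibility argument, exploiting the determinantal identity $d = r_{-1}d_0 - r_0 d_{-1}$ together with the relative primality hypotheses $\gcd(r_{-1},d_{-1}) = 1 = \gcd(r_0,d_0)$.

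First I would set $g := \gcd(r_{-1}, r_0)$ and observe that, since $g$ divides both $r_{-1}$ and $r_0$, the expansion $d = r_{-1}d_0 - r_0 d_{-1}$ immediately gives $g \mid d$. Hence $g$ divides both $\gcd(r_{-1}, d)$ and $\gcd(r_0, d)$.

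For the reverse inclusions, let $h := \gcd(r_{-1}, d)$. Then $h \mid r_{-1}$ and $h \mid d$, so from $r_0 d_{-1} = r_{-1} d_0 - d$ we conclude $h \mid r_0 d_{-1}$. The relative-primality hypothesis $\gcd(r_{-1}, d_{-1}) = 1$ forces $\gcd(h, d_{-1}) = 1$, since $h \mid r_{-1}$. Therefore $h \mid r_0$, whence $h \mid g$. A symmetric argument, using $\gcd(r_0, d_0) = 1$ and the identity $r_{-1} d_0 = d + r_0 d_{-1}$, shows $\gcd(r_0, d) \mid g$. Combining the two directions yields the three-way equality.

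The argument is essentially routine, so there is no real obstacle; the only thing to watch is that the relative-primality hypothesis on the correct pair is invoked for each direction (one uses $\gcd(r_{-1}, d_{-1}) = 1$, the other uses $\gcd(r_0, d_0) = 1$). I expect the proof to occupy only a few lines.
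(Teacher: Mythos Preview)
Your proof is correct and follows essentially the same approach as the paper: both arguments first observe that $\gcd(r_{-1},r_0)$ divides $d$ (hence divides both of the other gcds), and then establish the reverse divisibility by using the determinantal relation to show $\gcd(r_i,d)\mid r_{1-i}\,d_{1-i}$ and invoking the appropriate coprimality hypothesis to strip off the $d_{1-i}$ factor. The only cosmetic difference is that the paper writes out the $\gcd(r_0,d)$ case explicitly (using $\gcd(r_0,d_0)=1$) while you write out the $\gcd(r_{-1},d)$ case (using $\gcd(r_{-1},d_{-1})=1$), with the other handled by symmetry in both proofs.
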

\begin{proof}
Note first that $\gcd(r_{-1},r_0)$ divides both $\gcd(r_{-1},d),\gcd(r_0,d)$. Conversely, suppose $a:= \gcd(r_0,d)$ also divides $r_{-1}$. Then $a|r_{-1}d_0 - r_0d_{-1}$ and hence $a | r_{-1}d_0$. Now $a,d_0$ are relatively prime so $a|r_{-1}$ too and the proof is now complete noting symmetry in $r_{-1},r_0$.
\end{proof}

We unfortunately only have a partial converse to the above result.
\begin{prop}  \label{prop.suffCriterionExistd}
Suppose $d\in \mathbb{Z}$ and $\cvecr \in \mathbb{Z}_{>0}^2$ are such that $\gcd(r_{-1},r_0) = \gcd(r_{-1},d) = \gcd(r_0,d)=: \bar{r}$ is odd. Then there exists $\cvecd \in \mathbb{Z}^2$ relatively prime to $\cvecr$ such that
\begin{equation}  \label{eq.suffCritd}
d = \begin{vmatrix}
    r_{-1} & d_{-1} \\ r_0 & d_0
\end{vmatrix}
\end{equation}
If $\bar{r} = 1$ then any solution $\cvecd$ to Equation~(\ref{eq.suffCritd}) is automatically relatively prime to $\cvecr$.
\end{prop}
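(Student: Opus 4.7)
The plan is to split the statement into the general existence claim and the ``furthermore'' claim, treating the latter first because it is essentially formal. For the ``furthermore'' part, when $\bar{r}=1$ we have $\gcd(r_{-1},r_0)=1$, so Bezout produces some $\cvecd$ satisfying (\ref{eq.suffCritd}). For any such $\cvecd$, if a prime $p$ divides both $r_{-1}$ and $d_{-1}$, then it divides $r_{-1}d_0-r_0 d_{-1}=d$, hence divides $\gcd(r_{-1},d)=\bar{r}=1$, a contradiction; the symmetric argument handles $(r_0,d_0)$.

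For the general case, I would factor out $\bar{r}$: write $r_{-1}=\bar{r}s_{-1}$, $r_0=\bar{r}s_0$, $d=\bar{r}e$, so the hypotheses translate to $\gcd(s_{-1},s_0)=\gcd(s_{-1},e)=\gcd(s_0,e)=1$ and (\ref{eq.suffCritd}) becomes $s_{-1}d_0-s_0 d_{-1}=e$. By Bezout (using $\gcd(s_{-1},s_0)=1$) there is a particular solution $(d_{-1}^*,d_0^*)$, and the general solution is $(d_{-1},d_0)=(d_{-1}^*+us_{-1},\,d_0^*+us_0)$ for $u\in\mathbb{Z}$. The target inequalities are $\gcd(\bar{r}s_{-1},d_{-1}^*+us_{-1})=1$ and $\gcd(\bar{r}s_0,d_0^*+us_0)=1$, and the game is to choose $u$ making both hold.

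I would then do a prime-by-prime analysis. Reducing $s_{-1}d_0^*-s_0 d_{-1}^*=e$ modulo $s_{-1}$ and using $\gcd(s_{-1},e)=\gcd(s_{-1},s_0)=1$ shows $\gcd(s_{-1},d_{-1}^*)=1$, so every prime divisor of $s_{-1}$ is automatically coprime to $d_{-1}^*+us_{-1}$ for every $u$; the symmetric statement handles primes dividing $s_0$. Hence the only live constraints concern primes $p\mid\bar{r}$. For each such $p$, we must avoid either one residue class of $u$ modulo $p$ (when $p$ divides exactly one of $s_{-1},s_0$, noting $\gcd(s_{-1},s_0)=1$ rules out the ``both'' case) or at most two residue classes (when $p$ is coprime to both $s_{-1}$ and $s_0$).

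The main obstacle, and the step where the oddness of $\bar{r}$ is essential, is checking that this system of avoidance conditions is simultaneously solvable. For each prime $p\mid\bar{r}$, excluding at most two residues from $\mathbb{Z}/p\mathbb{Z}$ leaves a nonempty set precisely when $p\geq 3$; since $\bar{r}$ is odd, every such $p$ qualifies, so we may pick a permissible residue for $u$ modulo each $p\mid\bar{r}$ and assemble them by the Chinese Remainder Theorem to obtain the required $u$. The verification that the argument genuinely needs ``at most two residues'' (rather than one) in the coprime-to-both case is the one piece of bookkeeping that must be handled carefully, and it explains why the proposition cannot simply be extended to even $\bar{r}$.
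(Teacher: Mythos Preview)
Your proof is correct and follows essentially the same route as the paper: factor out $\bar{r}$, take a particular B\'ezout solution, parametrize all solutions by $u\mapsto (d_{-1}^*+us_{-1},\,d_0^*+us_0)$, and then use CRT on primes $p\mid\bar{r}$ after observing that at most two residue classes of $u$ modulo $p$ are excluded, which is harmless since $p\ge 3$. The paper phrases the initial step as first invoking the $\bar{r}=1$ case to obtain $(d'_{-1},d'_0)$ relatively prime to $(r'_{-1},r'_0)$, whereas you derive $\gcd(s_i,d_i^*)=1$ directly from $s_{-1}d_0^*-s_0 d_{-1}^*=e$; these are the same observation.
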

\begin{proof}
The last assertion follows immediately from definitions.

Write $\cvecr = \bar{r} \cvec{r'_{-1}}{r'_0}, d = \bar{r} d'$. Since $r'_{-1}, r'_0,d'$ are pairwise relatively prime, we can find $\cvec{d'_{-1}}{d'_0} \in \mathbb{Z}^2$ relatively prime to $\cvec{r'_{-1}}{r'_0}$ such that  $d' = \begin{vmatrix}
    r'_{-1} & d'_{-1} \\ r'_0 & d'_0
\end{vmatrix}$. We seek an integer $m$ such that $\cvecd = m \cvec{r'_{-1}}{r'_0} + \cvec{d'_{-1}}{d'_0}$ has coordinates relatively prime to $\bar{r}$. Since any such $\cvecd$ satisfies Equation~(\ref{eq.suffCritd}) and is relatively prime to $\cvec{r'_{-1}}{r'_0}$, we will have obtained our desired $\cvecd$.

For each prime $p|\bar{r}$ and $i = -1,0$, the fact that $r'_i,d'_i$ are relatively prime ensures there is at most one value of $m(p)_i \in \mathbb{Z}/p$ such that
$$
m(p)_i r'_i + d'_i \equiv 0 \bmod p.
$$
Since $p>2$, we can pick $m(p) \in \mathbb{Z}/p$ such that $m(p) \not\equiv m(p)_i \bmod p$ for all $p$. It suffices now to use the Chinese remainder theorem to pick any $m \in \mathbb{Z}$ such that $m \equiv m(p) \bmod p$ for the finite number of primes $p| \bar{r}$. 
\end{proof}

\begin{remark}
It is easy to see that the above Proposition fails if the assumption that $\bar{r}$ is odd does not hold.
\end{remark}

We are interested in helices up to shifting and twisting by line bundles (see Definition \ref{def:numericalShift}). Recall from Propositions~\ref{prop.helixShift}, \ref{prop.twistHelix} that such operations do not change the dimensions $d$ of consecutive Hom spaces, nor the invariant $D$, and that the negative limit slope $\theta$ will only change by the integer corresponding to the degree of the line bundle one twists by.

We will show later, in contrast to the commutative case $d=2$, that $\theta$ and $d$ do not uniquely determine the numerical class of a helix. The only uniqueness result we have is the following.

\begin{proposition}  \label{prop.helixUniqueUpToDual}
Let $\underline{\cE}, \underline{\cE}'$ be helices with ranks and degrees given by $r_i, d_i, r'_i, d'_i$ respectively and let $\theta$ be the negative limit slope of $\underline{\cE}$. Suppose that i) both helices have $d$-dimensional consecutive Hom spaces, ii) $D:= D(r_{-1},r_{0}) = D(r'_{-1},r'_0)$, iii) $\gcd(r_{-1},r_0) = 1$ and iv) $r_i = r'_j$ for some $i,j$.

Then
\begin{enumerate}
    \item $\underline{\cE}'$ is in the same numerical class as either $\underline{\cE}$ or its dual $\underline{\cE}^*$.
    \item $\theta$ is also the negative limit slope of some twist of $\underline{\cE}^*$ iff $\theta \in \frac{1}{2}\mathbb{Z} - \frac{d}{2D}\sqrt{d^2-4}$.
    \item If $D<4(d-2)$, $\underline{\cE}^*, \underline{\cE}$ are in the same numercial class iff $r_{i-1} = r_i =1$ or $r_{i-1} = r_{i+1}$ for some $i$.
\end{enumerate}
\end{proposition}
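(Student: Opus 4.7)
I plan to treat the three parts in order, using the structural ideas from part (1) together with the formulas from Propositions~\ref{prop.helixShift}, \ref{prop.twistHelix}, and \ref{prop.dualHelix}.

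For part (1), I would first use Proposition~\ref{prop.helixShift} to shift both $\underline{\cE}$ and $\underline{\cE}'$ so that $r_0 = r'_0 =: R$ (possible by hypothesis iv, and shifts preserve $d$, $D$, and numerical class). Since both helices satisfy the quadratic relation from Remark~\ref{rem.invariant} with the common value $D$, both $r_{-1}$ and $r'_{-1}$ are roots of $x^2 - dRx + (R^2 + D)=0$. By Vieta these two roots are $r_{-1}$ and $dR - r_{-1} = r_1$. If $r'_{-1} = r_{-1}$, the two rank sequences coincide; if $r'_{-1} = r_1$, an induction on the recurrence $r_{n+1} = dr_n - r_{n-1}$ shows $r'_n = r_{-n}$, which is exactly the rank sequence of $\underline{\cE}^*$. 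In either case, once the ranks agree, the two expressions $d = r_{-1}d_0 - r_0 d_{-1} = r_{-1}d'_0 - r_0 d'_{-1}$ give $r_{-1}(d'_0-d_0) = r_0(d'_{-1}-d_{-1})$, whence $\gcd(r_{-1},r_0)=1$ produces a unique integer $a$ with $d'_i - d_i = a r_i$ for $i=-1,0$. By Proposition~\ref{prop.twistHelix} this exhibits the required twist by a line bundle of degree $a$.

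For part (2), by Lemma~\ref{lemma.limit}, equation~(\ref{eqn.neglimit}), and Remark~\ref{rem.Dfunction} we may write $\theta = \alpha - \tfrac{d}{2D}\sqrt{d^2-4}$ with $\alpha \in \mathbb{Q}$. Proposition~\ref{prop.dualHelix} then gives $\theta(\underline{\cE}^*) = -\alpha - \tfrac{d}{2D}\sqrt{d^2-4}$, and by Proposition~\ref{prop.twistHelix} twisting $\underline{\cE}^*$ by a line bundle of degree $n$ adds $n$ to this. Hence $\theta$ is the negative limit slope of some twist of $\underline{\cE}^*$ iff $\theta - \theta(\underline{\cE}^*) = 2\alpha \in \mathbb{Z}$, iff $\alpha \in \tfrac{1}{2}\mathbb{Z}$, which is the stated condition.

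For part (3), $\underline{\cE}^*$ and $\underline{\cE}$ lie in a common numerical class precisely when, after shifting $\underline{\cE}^*$, both the ranks match and the degrees agree up to a twist. The rank of $\underline{\cE}^*$ at position $n$ is $r_{-n}$, so the rank condition reads $r_n = r_{k-n}$ for some $k \in \mathbb{Z}$, i.e. global palindromicity around $k/2$. If $k = 2i$ is even this reduces to $r_{i-1} = r_{i+1}$; if $k = 2i-1$ is odd it forces $r_{i-1} = r_i$, and since $\gcd$ of consecutive ranks is an invariant of the recurrence and therefore equals $\gcd(r_{-1},r_0) = 1$, this yields $r_{i-1} = r_i = 1$. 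Each converse is an easy induction on the recurrence propagating local equality at $i$ to global palindromicity. Finally, once ranks are matched the degree-matching is supplied verbatim by the gcd argument from part (1), since $\underline{\cE}^*$ (and any shift of it) has the same $d$-value as $\underline{\cE}$. The hypothesis $D < 4(d-2)$ enters through Lemma~\ref{lem.smallDcase}, which in this regime pins down the minimum rank to $1$ and keeps the gcd hypothesis in force.

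The main obstacle I expect is the clean bookkeeping in part (1) — distinguishing the ``same helix up to shift'' subcase from the ``dual helix up to shift'' subcase, and tracking that the rank-reversal argument preserves the gcd hypothesis for the degree computation — after which part (2) is a direct application of the formulas and part (3) reduces to combinatorial palindromicity plus the gcd argument already in place.
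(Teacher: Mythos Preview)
Your approach matches the paper's proof essentially step for step: shift to align a common rank, solve the quadratic in $y$ to get the $\underline{\cE}$-versus-$\underline{\cE}^*$ dichotomy and invoke $\gcd(r_{-1},r_0)=1$ to produce an \emph{integer} twist (part 1); compare the rational part of $\theta$ under dualization and integer twists (part 2); and for part 3 reduce to palindromicity of the rank sequence and reuse the gcd argument for the degrees. The paper is terser---in part (3) it normalizes so that $r_0$ is the minimum and then reads off $r_{-1}=r_0$ or $r_{-1}=r_1$ directly---but the content is the same.

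One small correction: your closing remark that ``$D<4(d-2)$ enters through Lemma~\ref{lem.smallDcase} \ldots\ and keeps the gcd hypothesis in force'' is off. The gcd condition is hypothesis iii), not a consequence of the bound on $D$, and in fact your own argument for (3) never invokes Lemma~\ref{lem.smallDcase} or the inequality $D<4(d-2)$ at all. (The paper's proof of (3) does not visibly use it either.)
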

\begin{proof}
The solutions of the equation
\begin{equation*}
    - y^2 + dr_0 y - r_0^2 = D
\end{equation*}
are $y=r_{-1}$ and $y=r_{1}$.  After shifting, we may assume, by iv), that $r_{0}'=r_{0}$ so that either $r_{-1}'=r_{-1}$ or $r_{-1}'=r_{1}$.  Proposition~\ref{prop.twistHelix} and our assumption iii) ensure that $\underline{\cE}'$ is numerically equal to a twist of $\underline{\cE}$ in the first case, and a twist of $\underline{\cE}^*$ in the second. This proves (1).

If $\theta = a - \frac{d}{2D}\sqrt{d^2-4}$, then the negative limit slope of twists of $\underline{\cE}^*$ must have the form $n -a - \frac{d}{2D}\sqrt{d^2-4}$ for some integer $n$ by Proposition \ref{prop.dualHelix} and Proposition \ref{prop.twistHelix}. These can only be equal if $a \in \frac{1}{2}\mathbb{Z}$, thus proving (2).

Finally, suppose that $\underline{\cE}^*,\underline{\cE}$ are in the same numerical class. We first shift $\underline{\cE}$ so that $r_0\leq r_{-1}$ and $r_0 < r_1$. If $r_{-1} \neq r_0$, then we must have $r_{-1} = r_1$. The converse, is just an elementary calculation.
\end{proof}

We now address the question of determining numerical classes of helices with given parameters $D$ and $d$. We wish to use Lemma~\ref{lem.smallDcase}, valid when $D < 4(d-2)$, which allows us to shift the helix so that $r_0 = 1$. We may thus suppose that the helix has $\cvecr = \cvec{y}{1}$ for some $y \in \mathbb{Z}_{>0}$. Note $D = dy-y^2-1$ which suggests that we should look at the cases $y=1,2,3,4$. We look at each of these in turn. Recall also from Proposition~\ref{prop.suffCriterionExistd}, given any $d$ relatively prime to $y$, one can find $\cvecd \in \mathbb{Z}^2$ relatively prime to $\cvecr$ with $\seedmatrix = d$. There thus exist helices corresponding to these numerical invariants.

The $y=1$ case is the following.
\begin{proposition}  \label{prop.caseyis1}
Fix an integer $d >2$. Let  $\cvecr \in \mathbb{Z}^2_{>0}, \cvecd \in \mathbb{Z}^2$ be relatively prime vectors such that $D(r_{-1},r_0) = d-2$ and $d = \seedmatrix$. Then $\cvecr = A^n\cvec{1}{1}$ for some $n\in \mathbb{Z}$.

In particular, all helices with $d$-dimensional consecutive Hom spaces and negative limit slope $\theta \in \mathbb{Q} -\frac{d}{2(d-2)}\sqrt{d^2-4}$ are in the same numerical class.
\end{proposition}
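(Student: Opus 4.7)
The plan is to invoke Lemma~\ref{lem.smallDcase} with $D = d-2$. Since $d>2$, we have $0 < d-2 < 4(d-2)$, so the hypothesis is met and the lemma provides integers $n$ and $y \geq 1$ with $\cvecr = A^n \cvec{y}{1}$ and $dy - y^2 - 1 = d-2$. Rearranging gives the quadratic $y^2 - dy + (d-1) = 0$, which factors as $(y-1)(y-(d-1)) = 0$. If $y = 1$ we are done. Otherwise $y = d-1$, and the direct computation $A^{-1} \cvec{1}{1} = \cvec{d-1}{1}$, using $A^{-1} = \begin{pmatrix} d & -1 \\ 1 & 0 \end{pmatrix}$, shows that $\cvecr = A^{n-1}\cvec{1}{1}$, completing the first part.

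For the ``in particular'' assertion, let $\underline{\mathcal{E}}$ be a helix with $d$-dimensional consecutive Hom spaces and negative limit slope $\theta$ of the prescribed form. Reading off the coefficient of $\sqrt{d^2-4}$ in the formula~(\ref{eqn.neglimit}) gives $\frac{d}{2(r_0^2 + r_{-1}^2 - dr_0 r_{-1})} = -\frac{d}{2D(r_{-1},r_0)}$, and comparing with the hypothesized coefficient $-\frac{d}{2(d-2)}$---using that $\sqrt{d^2-4}$ is irrational, as in Remark~\ref{rem.Dfunction}---forces $D(r_{-1},r_0) = d-2$. Hence the first part applies.

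Now suppose $\underline{\mathcal{E}}, \underline{\mathcal{F}}$ are two such helices. By the first part, the rank vectors of each lie in the $A$-orbit of $\cvec{1}{1}$, so Proposition~\ref{prop.helixShift} lets us shift each helix so that both have $\cvecr = \cvec{1}{1}$. Both associated degree vectors $\cvecd, \cvecdp$ then satisfy $d_0 - d_{-1} = d = d'_0 - d'_{-1}$, so $\cvecdp - \cvecd$ is an integer multiple of $\cvec{1}{1} = \cvecr$. By Proposition~\ref{prop.twistHelix}, $\underline{\mathcal{F}}$ is numerically equal to a twist of $\underline{\mathcal{E}}$, so the two helices lie in the same numerical class. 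The only step requiring real verification is the quadratic factorization and the identification $\cvec{d-1}{1} = A^{-1}\cvec{1}{1}$; the remainder is bookkeeping with the operations of Section~\ref{section.helixops}.
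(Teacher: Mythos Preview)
Your argument is correct. The first part matches the paper's proof almost verbatim: both invoke Lemma~\ref{lem.smallDcase} to reduce to $r_0=1$, solve the resulting quadratic in $y$, and handle the $y=d-1$ case by a single shift (the paper computes $r_1=dr_0-r_{-1}=1$, you compute $A^{-1}\cvec{1}{1}=\cvec{d-1}{1}$; these are the same observation).

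For the second assertion your route diverges from the paper's. The paper invokes Proposition~\ref{prop.helixUniqueUpToDual}: part~(1) reduces to showing $\underline{\mathcal{E}}$ and $\underline{\mathcal{E}}^*$ lie in the same numerical class, and part~(3) disposes of that using $r_{-1}=r_0=1$. You instead argue directly: once both helices are shifted to have rank vector $\cvec{1}{1}$, the determinant constraint forces the degree vectors to differ by an integer multiple of $\cvec{1}{1}$, which is exactly an integer twist by Proposition~\ref{prop.twistHelix}. Your argument is more elementary and self-contained here, bypassing the dual-helix machinery entirely. The paper's route, on the other hand, sets up a template that is reused in Propositions~\ref{prop.caseyis2}--\ref{prop.caseyis4}, where the two roots $y$ and $d-y$ are genuinely distinct modulo shifts and the comparison with the dual becomes the crux of the matter; your direct method would not extend so cleanly to those cases.
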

\begin{proof}
By Lemma~\ref{lem.smallDcase}, we may assume that $r_0 = 1$.  Since $D(r_{-1},r_{0})=d-2$, either $r_{-1}=1$ or $r_{-1}=d-1$.  If $r_{-1}=1$, we are done with the first part of the result, so suppose $r_{-1}=d-1$.  Then $r_{1}=dr_{0}-r_{-1}=1$, so, after shifting again, we may assume that $r_{-1}=r_{0}=1$, and the first assertion follows.

To prove the second assertion, suppose $\underline{\mathcal{E}}$, $\underline{\mathcal{E}}'$ both have negative limit slope in the indicated range and $d$-dimensional consecutive Hom spaces.  By our negative limit slope assumption and Remark \ref{rem.Dfunction}, $D(r_{-1},r_{0})=D(r_{-1}',r_{0}')=d-2$.  Thus, by Proposition~\ref{prop.helixUniqueUpToDual}(1), $\underline{\mathcal{E}}'$ is in the same numerical class as either $\underline{\mathcal{E}}$ or $\underline{\mathcal{E}}^{*}$.  However, Proposition~\ref{prop.helixUniqueUpToDual}(3), shows that $\underline{\cE}, \underline{\cE}^*$ are in the same numerical class so we are done. 
\end{proof}

The $y=2$ case is the following.
\begin{proposition}  \label{prop.caseyis2}
Fix an integer $d >2$. Let  $\cvecr \in \mathbb{Z}^2_{>0}, \cvecd \in \mathbb{Z}^2$ be relatively prime vectors such that $D(r_{-1},r_0) = 2d-5$ and $d = \seedmatrix$. Then $\cvecr = A^n\cvec{2}{1}$ or $A^n \cvec{d-2}{1}$ for some $n\in \mathbb{Z}$.

Let $\theta \in \mathbb{Q} - \frac{d}{2(2d-5)}\sqrt{d^2-4}$. If $d \neq 5$, then all helices $\underline{\cE}$ with $d$-dimensional consecutive Hom spaces and negative limit slope $\theta$ are in the same numerical class. For $d=5$, there are no helices unless the negative limit slope $\theta \in \mathbb{Z}+  \frac{1}{2} - \frac{1}{2}\sqrt{21}$ in which case there are two distinct numerical classes.
\end{proposition}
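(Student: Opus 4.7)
The first assertion is a direct application of Lemma~\ref{lem.smallDcase}. Since $D = 2d-5 < 4(d-2)$ for $d\geq 3$, we may shift the helix so that $\cvecr = \cvec{y}{1}$, where $y$ satisfies $dy - y^2 - 1 = 2d-5$, i.e.\ $y^2 - dy + 2(d-2) = 0$. This quadratic has discriminant $(d-4)^2$, giving $y=2$ or $y=d-2$.

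For the second assertion, the key structural observation is that the $\cvec{d-2}{1}$ helix is the dual of the $\cvec{2}{1}$ helix up to a shift: if $\underline{\cE}$ is of $\cvec{2}{1}$-type, then $r_1 = dr_0 - r_{-1} = d-2$, and the dual $\underline{\cE}^{*}$ satisfies $\cvec{r_{-1}^*}{r_0^*} = \cvec{r_1}{r_0} = \cvec{d-2}{1}$. By Proposition~\ref{prop.helixUniqueUpToDual}(1), any helix under consideration is numerically equivalent to $\underline{\cE}$ or $\underline{\cE}^*$ for some fixed $\cvec{2}{1}$-type $\underline{\cE}$, so the task reduces to determining, for each $d$, when both types can share a common $\theta$ and whether they are then in the same numerical class.

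I would split into cases on $d$. For even $d$, the equation $dr_{-1}r_0 - r_{-1}^2 - r_0^2 = 2d-5$ reduced mod $2$ forces $r_{-1}^2 + r_0^2 \equiv 1$, so exactly one of $r_{-1},r_0$ is even; combined with the simplicity requirement $\gcd(r_i, d_i)=1$ and the determinant condition $r_{-1}d_0 - r_0d_{-1} = d$, reducing the latter mod $2$ produces an odd left-hand side against an even right-hand side, a contradiction, so no helix exists. For $d=3$, the identity $A\cvec{2}{1} = \cvec{1}{d-2} = \cvec{1}{1} = \cvec{d-2}{1}$ shows the two types are shifts of each other, giving one numerical class. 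For odd $d \neq 3,5$, substituting $d_{-1} = 2d_0 - d$ (resp.\ $d_{-1}' = (d-2)d_0' - d$) into the formula of Lemma~\ref{lemma.limit} yields rational parts of $\theta$ equal to $d_0 - \tfrac{d(d-4)}{2(2d-5)}$ and $d_0' + \tfrac{d(d-4)}{2(2d-5)}$ respectively. By Proposition~\ref{prop.twistHelix} twisting only changes the rational part by an integer, so both types realize a single $\theta$ iff $\tfrac{d(d-4)}{2d-5} \in \mathbb{Z}$; the identity $4d(d-4) = (2d-3)(2d-5) - 15$ together with $\gcd(4, 2d-5) = 1$ shows this is equivalent to $(2d-5)\mid 15$, i.e.\ $d \in \{3,4,5,10\}$. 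For odd $d \notin \{3,5\}$ this fails, so any given $\theta$ is realized by at most one type and there is exactly one numerical class.

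For $d=5$ the same computation gives rational parts $d_0 - \tfrac{1}{2}$ and $d_0' + \tfrac{1}{2}$, both in $\mathbb{Z}+\tfrac{1}{2}$, so no helix exists unless $\theta \in \mathbb{Z}+\tfrac{1}{2}-\tfrac{1}{2}\sqrt{21}$. For such $\theta$, both types are realized, and I would apply Proposition~\ref{prop.helixUniqueUpToDual}(3) to show the classes are distinct: the rank sequence of the $\cvec{2}{1}$-type helix at $d=5$ is $\ldots, 43, 9, 2, 1, 3, 14, 67, \ldots$, which contains no two consecutive $1$'s, and the recursion $r_{n+1} = dr_n - r_{n-1}$ implies that any $r_{i-1} = r_{i+1}$ would force full symmetry $r_{i-k}=r_{i+k}$ for all $k$, hence $i=0$ (the unique minimum), giving $r_{-1} = r_1$, i.e.\ $2 = 3$, a contradiction. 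Hence exactly two numerical classes. The main technical obstacles I anticipate are the divisibility equivalence $(2d-5)\mid d(d-4) \Leftrightarrow d \in \{3,4,5,10\}$ and the parity contradiction for even $d$.
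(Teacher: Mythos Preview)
Your proof is correct and follows essentially the same strategy as the paper: reduce to the two rank-types $\cvec{2}{1}$ and $\cvec{d-2}{1}$ via Lemma~\ref{lem.smallDcase}, recognise them as a helix and its dual via Proposition~\ref{prop.helixUniqueUpToDual}(1), and then determine for which $d$ the two types can share a negative limit slope by the divisibility condition $(2d-5)\mid d(d-4)$. The only differences are cosmetic: you exclude even $d$ by a direct parity argument where the paper cites Proposition~\ref{prop.necCriterionExistd}, you compute the rational parts of $\theta$ for both types directly rather than invoking Proposition~\ref{prop.helixUniqueUpToDual}(2), and your identity $4d(d-4)=(2d-3)(2d-5)-15$ reduces the divisibility to $(2d-5)\mid 15$ whereas the paper substitutes $d=2\bar d+1$ first.
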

\begin{proof}
The proof of the first assertion follows the same argument as that for Proposition~
\ref{prop.caseyis1}.  To prove the second assertion, let $\theta'  \in \mathbb{Q} - \frac{d}{2(2d-5)}\sqrt{d^2-4}$, and suppose $\underline{\mathcal{E}}'$, denotes a two-periodic elliptic helix with negative limit slope $\theta'$.  Then $D(r_{-1}',r_{0}')=2d-5$.  On the other hand, consider the two-periodic elliptic helix $\underline{\mathcal{E}}$ with $\cvecr = \cvec{2}{1}, \cvecd = \cvec{-d}{0}$.  Note that $D(2,1) = 2d-5$ and that $\begin{vmatrix}
    2 & -d \\ 1 & 0
\end{vmatrix} = d$.  Then, by Proposition~\ref{prop.helixUniqueUpToDual}(1), $\underline{\mathcal{E}}'$ is in the same numerical class as either $\underline{\mathcal{E}}$ or $\underline{\mathcal{E}}^{*}$. Therefore, it suffices to show that if $d \neq 5$, then the negative limit slope of $\underline{\mathcal{E}}$, $\theta$, isn't the negative limit slope of some twist of $\underline{\mathcal{E}}^{*}$, while if $d=5$, $\underline{\mathcal{E}}$ and $\underline{\mathcal{E}}^{*}$ have the same negative limit slope, up to twisting, so that the assertion follows from the first part and Proposition~\ref{prop.helixUniqueUpToDual}(3).

To prove the claim above, we use Proposition~\ref{prop.helixUniqueUpToDual}(2) to check when $\theta \in \frac{1}{2}\mathbb{Z}- \frac{d}{2D}\sqrt{d^2-4}$.  From Equation~(\ref{eqn.neglimit}), the rational part of $\theta(\seed)$ is $-\frac{4d - d^2}{2(2d-5)}$ so we must solve for the condition $
2d-5 | 4d - d^2$. Now by Propostion~\ref{prop.necCriterionExistd}, we must have $d$ odd so we write $d = 2\bar{d} + 1$ and consider the divisibility condition
\begin{equation*}
4\bar{d} -3 \ \Bigm| \ 4 \bar{d}^2 - 4 \bar{d} -3.
\end{equation*}
Using the division algorithm, this is equivalent to $4 \bar{d} -3 | \bar{d} + 3$ from which it is easy to conclude that the only solutions are $\bar{d} = 1,2$ giving $d = 3,5$. The case $d=3$ is actually already covered in Proposition~\ref{prop.caseyis1} since then $\cvec{d-2}{1} = \cvec{1}{1}$. 
\end{proof}

\begin{example}
For an explicit example, the case $d=5, \theta = \frac{1}{2} - \frac{1}{2}\sqrt{21}$ can be obtained from helices with numerical seeds $\cvecr = \cvec{2}{1}, \cvecd = \cvec{-3}{1}$ or $\cvecr = \cvec{3}{1}, \cvecd = \cvec{-5}{0}$. They are not in the same numerical class.
\end{example}

The $y=3$ case is the following.
\begin{proposition}  \label{prop.caseyis3}
Fix an integer $d \geq 4$. Let  $\cvecr \in \mathbb{Z}^2_{>0}, \cvecd \in \mathbb{Z}^2$ be relatively prime vectors such that $D(r_{-1},r_0) = 3d-10$ and $d = \seedmatrix$. Then $\cvecr = A^n\cvec{3}{1}$ or $A^n \cvec{d-3}{1}$ for some $n\in \mathbb{Z}$.

Let $\theta \in \mathbb{Q} - \frac{d}{2(3d-10)}\sqrt{d^2-4}$. If $d \neq 5,10$, then all two-periodic elliptic helices $\underline{\cE}$ with $d$-dimensional consecutive Hom spaces and negative limit slope $\theta$ are in the same numerical class. For $d=10$, there are no two-periodic elliptic helices unless the negative limit slope $\theta \in \mathbb{Z} - \sqrt{6}$ in which case there are two numerical classes. The case $d=5$ is covered in Proposition~\ref{prop.caseyis2}.
\end{proposition}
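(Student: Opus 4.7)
The plan is to follow the template of Propositions~\ref{prop.caseyis1} and \ref{prop.caseyis2}. For the first assertion, note that $D = 3d-10 < 4(d-2)$ for all $d \geq 4$, so Lemma~\ref{lem.smallDcase} permits shifting so that $r_0 = 1$; the equation $dr_{-1} - r_{-1}^2 - 1 = 3d-10$ then factors as $(r_{-1}-3)(r_{-1}-(d-3))=0$, giving $\cvecr = \cvec{3}{1}$ or $\cvec{d-3}{1}$. These two seeds are interchanged under duality after a shift, in the sense of Proposition~\ref{prop.dualHelix}.

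For the second assertion, fix $\theta \in \mathbb{Q} - \frac{d}{2(3d-10)}\sqrt{d^2-4}$ and let $\underline{\cE}'$ be a helix with this slope and $d$-dimensional consecutive Hom spaces; by Remark~\ref{rem.Dfunction}, $D(r'_{-1},r'_0) = 3d-10$. Choose a reference $\cvec{3}{1}$-type helix $\underline{\cE}$ with degree seed $\cvec{3d_0-d}{d_0}$ (existence requires $3 \nmid d$ by Proposition~\ref{prop.necCriterionExistd}). Then Proposition~\ref{prop.helixUniqueUpToDual}(1) reduces the question to determining (i)~when some twist of $\underline{\cE}^*$ also has slope $\theta$, and (ii)~in that case, whether $\underline{\cE}$ and $\underline{\cE}^*$ lie in the same numerical class.

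For (i), using (\ref{eqn.neglimit}) with $r_{-1}=3$, $r_0=1$, $d_{-1}=3d_0-d$, a direct computation gives the rational part of $\theta$ as $R = d_0 - \frac{d(d-6)}{2(3d-10)}$. By Proposition~\ref{prop.helixUniqueUpToDual}(2), criterion (i) holds iff $R \in \tfrac{1}{2}\mathbb{Z}$, i.e., $(3d-10)\mid d(d-6)$. Since $9d(d-6) \equiv -24d \pmod{3d-10}$ and $\gcd(3,3d-10)=1$, this is equivalent to $(3d-10)\mid 24d$; the identity $72d = 24(3d-10)+240$ then reduces it to $(3d-10)\mid 240$. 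Enumerating positive divisors of $240$ of the form $3d-10$ with $d\geq 4$ and $3\nmid d$ yields exactly $d \in \{4, 5, 10\}$.

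The three exceptional values are then treated in turn. For $d = 4$, $\cvec{d-3}{1} = \cvec{1}{1}$ and a short calculation of the rank sequence $(\ldots,11,3,1,1,3,11,\ldots)$ shows both seeds define the same helix up to shift, so the numerical class is unique. The $d=5$ case is Proposition~\ref{prop.caseyis2}. For $d = 10$, the constant $\frac{d(d-6)}{2(3d-10)}$ equals $1$, so $R \in \mathbb{Z}$ and $\theta \in \mathbb{Z} - \sqrt{6}$ (after simplifying $\sqrt{96} = 4\sqrt{6}$); no helix exists for $\theta$ outside this set. For $\theta \in \mathbb{Z}-\sqrt{6}$ both $\cvec{3}{1}$- and $\cvec{7}{1}$-type helices arise, and Proposition~\ref{prop.helixUniqueUpToDual}(3) (applicable since $D=20<4(d-2)=32$) places them in distinct numerical classes: the rank sequence $(\ldots,29,3,1,7,69,\ldots)$ has no adjacent equal $1$'s and no palindromic triple, since the strictly unimodal minimum is $r_0 = 1$ and $r_{-1}=3 \neq 7 = r_1$. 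For remaining $d \geq 4$ with $3 \nmid d$, criterion (i) fails, so each $\theta$ is realized by at most one numerical class. The main obstacle is the divisibility reduction to $(3d-10)\mid 240$ and the subsequent verification in the $d = 10$ case; Proposition~\ref{prop.helixUniqueUpToDual}(3) is what ultimately separates the two classes.
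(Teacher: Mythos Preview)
Your proof is correct and follows the same template as the paper's own argument: apply Lemma~\ref{lem.smallDcase} to reduce to $r_0=1$, solve the resulting quadratic for $r_{-1}$, then use Proposition~\ref{prop.helixUniqueUpToDual} to reduce uniqueness to the half-integer criterion on the rational part of $\theta$. The only substantive difference is in how the divisibility condition is solved. The paper substitutes $d = 3\bar d + b$ with $b\in\{1,2\}$ (since $3\nmid d$) and handles the two residue classes separately via the division algorithm, arriving at $d\in\{4,5,10\}$. You instead clear the factor of $9$ using $\gcd(9,3d-10)=1$, reduce $(3d-10)\mid d(d-6)$ to $(3d-10)\mid 240$, and enumerate divisors; this is a bit more direct and avoids the case split. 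You also spell out explicitly why the $d=4$ case collapses to a single class (the rank sequence is palindromic around $1,1$) and why the two $d=10$ seeds are genuinely distinct via Proposition~\ref{prop.helixUniqueUpToDual}(3), both of which the paper leaves implicit by deferring to Propositions~\ref{prop.caseyis1} and~\ref{prop.caseyis2}.
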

\begin{proof}
The proof of the first assertion is similar to the proof of the first assertion in Proposition \ref{prop.caseyis1}.  For the second assertion, we repeat the proof used in Proposition~\ref{prop.caseyis2} using this time the vectors $\cvecr = \cvec{3}{1}, \cvecd = \cvec{-d}{0}$. Also, $d$ is coprime to $3$ so we write $d = 3\bar{d} + b$ where $b = 1$ or 2. The condition that the rational part of $\theta(\seed)$ is half an integer corresponds now to the divisibility condition
$$ 9 \bar{d} + 3b -10 \  \Bigm| \ 9 \bar{d}^2 + (6b - 18) \bar{d} + b^2 - 6b.$$
Examining cases $b=1,2$ one readily arrives at the unique solutions $d=4,5,10$. The solution $d=4$ is covered by Proposition~\ref{prop.caseyis1}, the solution $d=5$ is covered by Proposition~\ref{prop.caseyis2}  leaving the only new case $d=10$.
\end{proof}

\begin{example}
For an explicit example, the case $d=10, \theta = -\sqrt{6}$ can be obtained from helices with numerical seeds $\cvecr = \cvec{3}{1}, \cvecd = \cvec{-7}{1}$ or $\cvecr = \cvec{7}{1}, \cvecd = \cvec{-17}{-1}$. They are not in the same numerical class. 
\end{example}

The $y=4$ case is the following.
\begin{proposition}  \label{prop.caseyis4}
Fix an integer $d \geq 5$. Let  $\cvecr \in \mathbb{Z}^2_{>0}, \cvecd \in \mathbb{Z}^2$ be relatively prime vectors such that $D(r_{-1},r_0) = 4d-17$ and $d = \seedmatrix$. Then $\cvecr = A^n\cvec{4}{1}$ of $A^n\cvec{d-4}{1}$ for some $n\in \mathbb{Z}$.

In particular, all helices with $d$-dimensional consecutive Hom spaces and negative limit slope $\theta \in \mathbb{Q} -\frac{d}{2(4d-17)}\sqrt{d^2-4}$ are in the same numerical class unless $d= 17$. If $d= 17$, then there are two numerical equivalence classes for $\theta \in \mathbb{Z}  -\frac{d}{2(4d-17)}\sqrt{d^2-4}$.
\end{proposition}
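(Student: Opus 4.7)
The plan is to mirror the structure of the proofs of Propositions~\ref{prop.caseyis2} and~\ref{prop.caseyis3}. First, since $D = 4d - 17 < 4(d-2)$ for every $d \geq 5$, Lemma~\ref{lem.smallDcase} lets me assume after a shift that $r_0 = 1$. The Diophantine equation $dy - y^2 - 1 = 4d - 17$ for $y = r_{-1}$ then becomes $y^2 - dy + (4d-16) = 0$, which factors as $(y-4)(y-(d-4)) = 0$, so $y \in \{4, d-4\}$. Proposition~\ref{prop.necCriterionExistd} applied with $r_0 = 1$ forces $\gcd(y, d) = 1$; since $\gcd(d-4, d) = \gcd(4, d)$, this is equivalent to $d$ being odd. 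These observations yield the first assertion.

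For the second assertion, by Proposition~\ref{prop.helixUniqueUpToDual}(1) every helix with invariants $d$ and $D = 4d-17$ is numerically equivalent to a fixed helix $\underline{\cE}$ with seed $\cvecr = \cvec{4}{1}$, or to its dual $\underline{\cE}^*$. Inspection of the rank sequence of $\underline{\cE}$ shows that no two consecutive ranks both equal $1$, and that $r_{-1} = r_1$ would force $d = 8$, which is excluded by parity. Hence Proposition~\ref{prop.helixUniqueUpToDual}(3) places $\underline{\cE}$ and $\underline{\cE}^*$ in different numerical classes, and Proposition~\ref{prop.helixUniqueUpToDual}(2) then pinpoints the precise $\theta$ for which both classes realize the same negative limit slope.

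To apply Proposition~\ref{prop.helixUniqueUpToDual}(2), I would evaluate the rational part of $\theta$ using the explicit seed $\cvecd = \cvec{-d}{0}$, for which $\seedmatrix = d$. Formula~(\ref{eqn.neglimit}) gives this rational part as $\tfrac{8d-d^2}{2(4d-17)}$, and since $4d-17$ is always odd the condition that it lie in $\tfrac{1}{2}\mathbb{Z}$ reduces via the Euclidean algorithm (first to $4d-17 \mid 15 d$, then to $4d-17 \mid 255$) to $4d-17 \mid 255 = 3 \cdot 5 \cdot 17$. The divisors produce $d \in \{5, 8, 17, 68\}$, and imposing $d$ odd leaves $d \in \{5, 17\}$. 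The case $d = 5$ collapses $\cvec{d-4}{1}$ to $\cvec{1}{1}$ with $D = 3 = d - 2$, which is absorbed into Proposition~\ref{prop.caseyis1}, so only $d = 17$ survives as a genuine new exception with two numerical classes.

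The main obstacle I anticipate is managing the case analysis cleanly: one must ensure the congruence reductions are carried out correctly under the hypothesis that $4d-17$ is odd (so multiplying by $4$ is harmless), and one must verify that the exceptional $d = 17$ actually lands in the $\theta$-coset claimed, which involves tracking how integer twists act on the rational part computed via~(\ref{eqn.neglimit}). Once those calculations are in place, the rest is a routine adaptation of the arguments in Propositions~\ref{prop.caseyis2} and~\ref{prop.caseyis3}.
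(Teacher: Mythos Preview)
Your proposal is correct and follows the same overall architecture as the paper's proof: the same seed $\cvecr = \cvec{4}{1},\ \cvecd = \cvec{-d}{0}$, the same appeal to Lemma~\ref{lem.smallDcase} for the first assertion, and the same use of Proposition~\ref{prop.helixUniqueUpToDual}(1)--(3) to reduce the second assertion to deciding when the rational part of $\theta$ lies in $\tfrac{1}{2}\mathbb{Z}$.

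The one genuine difference is the arithmetic. The paper writes $d = 4\bar d + b$ with $b\in\{1,3\}$ and, in each case, performs a division-algorithm step followed by an inequality bound on $\bar d$ to isolate the exceptional values. Your route is tidier: you clear denominators directly (using that $4d-17$ is odd) to obtain $4d-17 \mid 15d$ and then $4d-17 \mid 255$, after which the exceptional $d$ fall out from the divisor list of $255$ together with the parity constraint. Both methods land on $d\in\{5,17\}$ and dispose of $d=5$ via Proposition~\ref{prop.caseyis1}; your reduction simply avoids the case split. Your explicit check of Proposition~\ref{prop.helixUniqueUpToDual}(3) (ruling out $r_{i-1}=r_i=1$ and $r_{-1}=r_1$, the latter forcing the even value $d=8$) is also a bit more detailed than the paper, which leaves this implicit in ``repeat the proof of Proposition~\ref{prop.caseyis2}.''
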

\begin{proof}
We again repeat the proof used in Proposition~\ref{prop.caseyis2} using this time the vectors $\cvecr = \cvec{4}{1}, \cvecd = \cvec{-d}{0}$. We determine when $\theta(\seed)$ has rational part half an integer. Since $d$ must be odd, we may write $d = 4 \bar{d} + b$ where $b=1$ or 3. The required divisibility condition now is 
$$ 16 \bar{d} + 4b -17 \Big| 16 \bar{d}^2 + (8b-32) \bar{d} + b^2 - 8b.$$
Using the division algorithm, this is equivalent to 
\begin{equation}\label{eq:halfIntegerCond}
 16 \bar{d} + 4b -17 \Big| (4b+1) \bar{d} + b^2 - 4b -17.    
\end{equation}
Suppose first that $b=1$. Then we must have 
$16 \bar{d} - 13 \Big| 5\bar{d} - 20$. 
In particular, either $5\bar{d} -20 = 0$ which corresponds to $d = 17$, or we have the inequality 
\begin{equation}
|16 \bar{d} - 13 | \leq |5\bar{d} - 20|.
\end{equation}
Solving this gives $\bar{d} \leq \frac{11}{7}$.
For such $\bar{d}$, $16 \bar{d} - 13 \Big| 5\bar{d} - 20$ only when $\bar{d}=1$ giving the solution $d=5$ which is already covered in  Proposition~\ref{prop.caseyis1}. Note that when $d=17$, the rational part of $\theta(\seed)$ is integral. 

Suppose now that $b = 3$ so that Equation~(\ref{eq:halfIntegerCond}) becomes $16 \bar{d} -5 \Big| 13\bar{d}  -20$. Again we have an inequality, this time
\begin{equation}
|16 \bar{d} - 5 | \leq |13\bar{d} - 20|.
\end{equation}
Solving gives $\bar{d} \leq \frac{25}{29}$
so there are no solutions with $d \geq 5$. 
\end{proof}

\begin{example} 
We now consider the case $d=5$ and $\theta=\frac{23}{10}-\frac{1}{30}\sqrt{21}$.  In this case, $D=75$, so $D \neq d-2, 2d-5, 3d-10, 4d-17$.  In particular, our previous analysis cannot be used.  We let $\begin{pmatrix} r_{-1} \\ r_{0} \end{pmatrix} = \begin{pmatrix} 4 \\ 7 \end{pmatrix}$, $\begin{pmatrix} d_{-1} \\ d_{0} \end{pmatrix} = \begin{pmatrix} 9 \\ 17 \end{pmatrix}$, or $\begin{pmatrix} r_{-1} \\ r_{0} \end{pmatrix} = \begin{pmatrix} 5 \\ 5 \end{pmatrix}$, $\begin{pmatrix} d_{-1} \\ d_{0} \end{pmatrix} = \begin{pmatrix} 11 \\ 12 \end{pmatrix}$.  Since, in the second case, $5|r_{i}$ for all $i$, corresponding two-periodic elliptic helices are not in the same numerical class.  
\end{example}



\section{Double covers of noncommutative projective lines by noncommuative elliptic curves}
\label{sec.doubleCovers}

Modulo the automorphism group $PGL_2$ of $\mathbb{P}^1$, the finite 2-to-1 morphisms from $X$ to $\mathbb{P}^1$ are given by the complete linear systems of a degree two line bundle (Proposition \ref{prop.commutative}(2)), and the automorphism group of $X$ acts transitively on these (Proposition \ref{prop.commutative}(3)). The goal of this subsection is to investigate the corresponding noncommutative story where we replace $X$ with ${\sf C}^{\theta}$ and $\mathbb{P}^1$ with Piontkovski's noncommutative line $\mathbb{P}^1_d$. 

\subsection{Basic notions from noncommutative algebraic geometry}  Recall that a {\it $\mathbb{Z}$-algebra over $k$} is a ring $A$ with decomposition $A = \oplus_{i,j \in \mathbb{Z}} A_{ij}$ such that 
\begin{itemize}
\item{} $A_{ij}$ are vector spaces over $k$,
\item{} multiplication is induced by associative multiplication maps 
$$
A_{ij} \otimes_{k} A_{jl} \to A_{jl}
$$ 
(multiplication $A_{ij} A_{kl}=0$ if $j \neq k$), and 
\item{} each $A_{ii}$ contains a unit element $e_i$ satisfying the usual unit axiom.
\end{itemize}
See \cite{sierra} for more information about $\mathbb{Z}$-algebras.  If $A$ is a $\mathbb{Z}$-algebra, we let ${\sf Gr }A$ denote the category of graded right $A$-modules.  If, in addition, $A$ is coherent, we may define an abelian category
$$
{\sf proj }A := {\sf coh }A/{\sf tors }A
$$
where ${\sf coh }A$ denotes the full subcategory of graded right $A$-modules consisting of coherent modules and ${\sf tors }A$ is the full subcategory of $A$ consisting of right-bounded modules \cite{polishproj}.  In addition, we let $\pi: {\sf coh }A \rightarrow {\sf proj }A$ denote the quotient functor.

If $\underline{\mathcal{E}}$ is a sequence of objects in ${\sf C}$, then we let $B_{\underline{\mathcal{E}}}$ denote the $\mathbb{Z}$-algebra with $i,j$-component $\operatorname{Hom}(\mathcal{E}_{-j}, \mathcal{E}_{-i})$ and product induced by composition, and we let $\mathbb{S}^{nc}(\underline{\mathcal{E}})$ is the quadratic part of $B_{\underline{\mathcal{E}}}$ (see \cite[Definition 2.1, 2.2]{CN}).  We let
$$
\Gamma_{\underline{\mathcal{E}}}:{\sf C } \longrightarrow {\sf Gr }B_{\underline{\mathcal{E}}}
$$
denote the functor $\bigoplus_{i\geq 0} \operatorname{Hom}(\mathcal{E}_{-i},-)$, where components of the image in degrees less than $0$ are just $0$ modules.  Recall that, by \cite[Proposition 2.3]{polishproj}, if $\underline{\mathcal{E}}$ is ample, and $X$ is an object of ${\sf C}$, then $\Gamma_{\underline{\mathcal{E}}}(X)$ is a coherent module.

\subsection{The Eilenberg-Watts Theorem for $\mathbb{Z}$-algebras} \label{section.ew}
The goal of this section is to prove a version of the Eilenberg-Watts Theorem for $\mathbb{Z}$-algebras (Proposition \ref{prop:EilenbergWatts}).  This result will be used to compute the automorphism group of noncommutative projective lines (Proposition \ref{prop.autoP1d}), and to classify double covers (Proposition \ref{prop:pullbackFunctor}).  

As the classical Eilenberg-Watts theorem characterizes those functors which are tensoring with a bimodule, we will need to first specify the notion of bimodule we will be using. Below, given a finite dimensional vector space $V$ and object $\mathcal{F} \in {\sf C}$, we let $V \otimes_k \mathcal{F}$ be the object in {\sf C} which represents the covariant functor $V^* \otimes_k \Hom_{\sf C}(\mathcal{F},-) \simeq \Hom_k(V,\Hom_{\sf C}(\mathcal{F},-))$. For $v \in V$ and $\phi \in \Hom_k(V,\Hom_{\sf C}(\mathcal{F},\mathcal{G}))$, we use the notation $\phi(v) \colon v \otimes \mathcal{F} \to \mathcal{G}$. 
\begin{definition} \label{def:AmoduleInC}
An {\em $(A,{\sf C})$-bimodule} is a collection of objects $\mathcal{F}_{\bullet} = \{\mathcal{F}_i\}_{i \in \mathbb{Z}}$ in ${\sf C}$ and morphisms $\mu_{ij} \colon A_{ij} \otimes_k \mathcal{F}_j \to \mathcal{F}_i$ for $i,j \in\mathbb{Z}$ satisfying the usual module axioms.  
\end{definition}
\begin{example}  \label{eg:functorBimodule}
Given a $k$-linear functor $F \colon {\sf coh}(A) \to {\sf C}$ we obtain the $(A,{\sf C})$-bimodule, $F(A)_{\bullet}:= \{F(e_iA)\}$. Indeed, given $a \in A_{ij}$, if $m_a \colon e_jA \to e_iA$ denotes left multiplication by $a$, then $F(a): F(e_j A) \to F(e_i A)$ gives the $A$-module structure. 
\end{example}
Suppose now that $A$ is coherent and $\mathcal{F}_{\bullet}$ is an $(A,{\sf C})$-bimodule. Given $M \in {\sf coh}(A)$ we wish to define $M \otimes_A \mathcal{F}_{\bullet} \in {\sf C}$. If {\sf C} has countably infinite direct sums then we can define 
\begin{equation} \label{eq:AmodInCtensor}
    M \otimes_A \mathcal{F}_{\bullet} := \left( \bigoplus_i M_i \otimes_k \mathcal{F}_i\right)\Big/\Omega \in {\sf C} 
\end{equation}
where $\Omega$ is the sum of the images of the maps 
\begin{equation}  \label{eq:tensorReln}
\rho_{ij}:=m_{ij} \otimes 1 - 1 \otimes \mu_{ij} \colon M_i \otimes A_{ij} \otimes \mathcal{F}_j \to M_j \otimes \mathcal{F}_j \oplus M_i \otimes \mathcal{F}_i
\end{equation}
and $m_{ij}$ denotes module multiplication in $M$. If {\sf C} does not have infinite direct sums, then we need to revert back to the universal property implied by Equations~(\ref{eq:AmodInCtensor}) and (\ref{eq:tensorReln}) as follows. 
\begin{definition}  \label{def:TensorC}
We define $M \otimes_A \mathcal{F}_{\bullet}$ to be the object in {\sf C} that is universal with respect to the following property. 
\begin{enumerate}
    \item There are morphisms $\iota_j \colon M_j \otimes_k \mathcal{F}_j \to M \otimes_A \mathcal{F}_{\bullet}$ for all $j \in \mathbb{Z}$ such that
    \item the composite morphisms
    $$M_i \otimes A_{ij} \otimes \mathcal{F}_j \xrightarrow{\rho_{ij}} M_j \otimes \mathcal{F}_j \oplus M_i \otimes \mathcal{F}_i \xrightarrow{(\iota_j\ -\iota_i)} M \otimes_A \mathcal{F}_{\bullet}$$
    are all zero. 
\end{enumerate}
\end{definition}
\begin{example}  \label{eg:tensoreiA}
One readily checks that $e_i A \otimes_A \mathcal{F}_{\bullet} \simeq \mathcal{F}_i$ with defining morphisms $\iota_j = \mu_{ij} \colon e_i Ae_j \otimes_k \mathcal{F}_j \to \mathcal{F}_i$.
\end{example}
To show existence of $M \otimes_A \mathcal{F}_{\bullet}$ more generally, we give an explicit construction as follows. Note that $M$ is coherent so there is a finite presentation
\begin{equation}  \label{eq:Mpresentation}
    \bigoplus_{s} e_{j_s} A \xrightarrow{\Phi}  \bigoplus_{r} e_{i_r} A \to M \to 0
\end{equation}
where $\Phi$ is viewed as a matrix with entries in $A$. We may thus define
\begin{equation} \label{eq:tensorViaPresentation}
M \underline{\otimes}_A \mathcal{F}_{\bullet} : = \operatorname{coker}\left(\bigoplus_{s} \mathcal{F}_{j_s}\xrightarrow{\Phi}  \bigoplus_{r} \mathcal{F}_{i_r}\right)
\end{equation}
which we shall show is $M \otimes_A \mathcal{F}_{\bullet}$.
\begin{proposition}  \label{prop:constructTensor}
We have $M \underline{\otimes}_A \mathcal{F}_{\bullet} \simeq M \otimes_A \mathcal{F}_{\bullet}$
\end{proposition}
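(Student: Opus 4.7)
The plan is to verify that $M \underline{\otimes}_A \mathcal{F}_{\bullet}$ satisfies the universal property of Definition~\ref{def:TensorC}; by Yoneda this would give the desired isomorphism and, as a byproduct, show the construction is independent of the chosen presentation in (\ref{eq:Mpresentation}).

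The first step will be to define the structural morphisms $\iota_j \colon M_j \otimes_k \mathcal{F}_j \to M \underline{\otimes}_A \mathcal{F}_{\bullet}$. Given $m \in M_j$, the surjection $\bigoplus_r e_{i_r}A \twoheadrightarrow M$ in degree $j$ produces a lift $(a_r) \in \bigoplus_r A_{i_r, j}$ with $m = \sum_r m_r \cdot a_r$, where $m_r \in M_{i_r}$ is the image of $e_{i_r}$. I would define $\iota_j(m \otimes -) \colon \mathcal{F}_j \to M \underline{\otimes}_A \mathcal{F}_{\bullet}$ as the composition of $\bigoplus_r \mu_{i_r, j}(a_r \otimes -) \colon \mathcal{F}_j \to \bigoplus_r \mathcal{F}_{i_r}$ with the canonical map to the cokernel (\ref{eq:tensorViaPresentation}). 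The key verification is that this is independent of the lift: two lifts differ by an element in the image of $\Phi$ in degree $j$, and applying the bimodule structure maps to such a difference yields precisely an element in the image of the $\mathcal{F}$-version of $\Phi$, hence zero in the cokernel. I would then verify condition (2) of Definition~\ref{def:TensorC}: if $(b_r)$ lifts $m \in M_i$, then $(b_r a)$ lifts $ma \in M_j$, and both sides of the compatibility equation collapse to $\sum_r \mu_{i_r, j}(b_r a \otimes f)$ by the bimodule associativity axiom of Definition~\ref{def:AmoduleInC}.

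For universality, suppose given $(N, \{\phi_j\})$ satisfying condition (2). The maps $\phi_{i_r}(m_r \otimes -) \colon \mathcal{F}_{i_r} \to N$ assemble into $\psi \colon \bigoplus_r \mathcal{F}_{i_r} \to N$, and condition (2) applied along the columns of $\Phi$ shows $\psi$ vanishes on the image of the $\mathcal{F}$-incarnation of $\Phi$. The universal property of the cokernel then induces $\bar{\phi} \colon M \underline{\otimes}_A \mathcal{F}_{\bullet} \to N$. To confirm $\bar{\phi} \circ \iota_j = \phi_j$ for \emph{all} $j$, not just those appearing as some $i_r$, I would take a lift $(a_r)$ of $m \in M_j$ and use condition (2) to rewrite $\phi_j(m \otimes f) = \sum_r \phi_{i_r}(m_r \otimes \mu_{i_r, j}(a_r \otimes f))$, which matches $\bar{\phi}(\iota_j(m \otimes f))$ by construction; uniqueness of $\bar{\phi}$ follows from the fact that the images of the $\iota_{i_r}$ (applied to the generators $m_r$) surject onto the image of $\bigoplus_r \mathcal{F}_{i_r}$ in $M \underline{\otimes}_A \mathcal{F}_{\bullet}$.

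The main obstacle I anticipate is the well-definedness of $\iota_j$ with respect to the chosen lift: this is the step where the two distinct roles of $\Phi$ — as a matrix of $A$-module morphisms in (\ref{eq:Mpresentation}) and as the matrix inducing the cokernel in (\ref{eq:tensorViaPresentation}) — must be reconciled via the bimodule axioms. Once this compatibility is set up cleanly, the remaining verifications are essentially formal consequences of the universal properties of cokernels and of $V \otimes_k \mathcal{F}$.
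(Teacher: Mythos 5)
Your proposal is correct and takes essentially the same route as the paper: both verify the universal property of Definition~\ref{def:TensorC} for the cokernel $M \underline{\otimes}_A \mathcal{F}_{\bullet}$, constructing $\iota_j$ by factoring the multiplication maps through the degreewise presentation $M_j = \bigl(\bigoplus_r e_{i_r}Ae_j\bigr)/\Phi\bigl(\bigoplus_s e_{j_s}Ae_j\bigr)$ and obtaining the universal comparison map by assembling the $\phi_{i_r}$ into a morphism out of $\bigoplus_r \mathcal{F}_{i_r}$ that kills the image of $\Phi$. Your explicit check that a change of lift lands in the image of the $\mathcal{F}$-incarnation of $\Phi$ (via bimodule associativity) is exactly the well-definedness step the paper compresses into ``this factors through the quotient by definition of $M \underline{\otimes}_A \mathcal{F}_{\bullet}$.''
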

\begin{proof}
We check the universal property for $M \underline{\otimes}_A \mathcal{F}_{\bullet}$. We first construct
$$
\iota_l \colon M_l \otimes_k \mathcal{F}_l \to M \underline{\otimes}_A \mathcal{F}_{\bullet}
$$ 
as follows. Note first that 
\begin{equation*}
    M_l = \frac{\bigoplus_{r} e_{i_r} Ae_l}{\Phi(\bigoplus_{s} e_{j_s} Ae_l)}
\end{equation*}
Now multiplication, as in Example~\ref{eg:tensoreiA} defines a morphism 
\begin{equation*}
  \iota'_l\colon \bigoplus_{r} e_{i_r} Ae_l \otimes \mathcal{F}_l \to \bigoplus_{r} \mathcal{F}_{i_r} \to M \underline{\otimes}_A \mathcal{F}_{\bullet}.
\end{equation*}
By definition of $M \underline{\otimes}_A \mathcal{F}_{\bullet}$, this factors through the quotient $M_l \otimes \mathcal{F}_l$ to give $\iota_l$. The $A$-module compatibility conditions in Definition~\ref{def:TensorC}(2) holds since it holds in $\iota'_l$. 

It remains only to show that $M\underline{\otimes}_A \mathcal{F}_{\bullet}$ is universal so suppose there is another object $\mathcal{G} \in {\sf C}$ equipped with morphisms $\omega_l \colon M_l \otimes_k \mathcal{F}_l \to \mathcal{G}$ satisfying the compatibility condition in Definition~\ref{def:TensorC}(2). The $\omega_{i_r}$ and (\ref{eq:Mpresentation}) can be used to define a morphism $\bigoplus_{r} \mathcal{F}_{i_r} \to \mathcal{G}$ and the compatibility condition ensures it factors through $M \underline{\otimes}_A \mathcal{F}_{\bullet}$ as desired. 
\end{proof}

\begin{proposition}  \label{prop:tensorIsFunctor}
We have a right exact $k$-linear functor $(-) \otimes_A \mathcal{F}_{\bullet} \colon {\sf coh}(A) \to {\sf C}$. 
\end{proposition}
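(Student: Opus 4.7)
The plan is to combine the universal property of Definition~\ref{def:TensorC} (which yields the functorial structure abstractly) with the explicit cokernel formula of Proposition~\ref{prop:constructTensor} (which reduces right exactness to the right exactness of cokernels of free modules).

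First I would extend $(-) \otimes_A \mathcal{F}_\bullet$ to morphisms. Given $\phi\colon M \to N$ in ${\sf coh}(A)$, the composites $\iota^N_j \circ (\phi_j \otimes 1_{\mathcal{F}_j})\colon M_j \otimes_k \mathcal{F}_j \to N \otimes_A \mathcal{F}_\bullet$ satisfy the compatibility condition of Definition~\ref{def:TensorC}(2), because $\phi$ is $A$-linear and the $\iota^N_j$ already satisfy the corresponding condition for $N$. The universal property then produces a unique morphism $\phi \otimes_A 1_{\mathcal{F}_\bullet}\colon M \otimes_A \mathcal{F}_\bullet \to N \otimes_A \mathcal{F}_\bullet$. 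Functoriality, preservation of identities, and $k$-linearity are then immediate from the uniqueness half of the universal property together with $k$-linearity of $\otimes_k$.

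For right exactness, let $M' \xrightarrow{f} M \xrightarrow{g} M'' \to 0$ be exact in ${\sf coh}(A)$. Using coherence of $A$ to secure finite free covers, I would pick a presentation $P_1 \xrightarrow{\Phi} P_0 \xrightarrow{\pi} M \to 0$ with $P_0, P_1$ finite direct sums of modules of the form $e_j A$, together with a surjection $\psi\colon Q \twoheadrightarrow M'$ of the same type. Since $Q$ is projective in ${\sf Gr}\,A$, I can lift $f \circ \psi$ along $\pi$ to obtain $\tilde{\psi}\colon Q \to P_0$. A short diagram chase shows that the kernel of $g \circ \pi$ is $\tilde{\psi}(Q) + \Phi(P_1)$, so
\begin{equation*}
Q \oplus P_1 \xrightarrow{(\tilde{\psi},\,\Phi)} P_0 \longrightarrow M'' \longrightarrow 0
\end{equation*}
is a finite free presentation of $M''$. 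Applying Proposition~\ref{prop:constructTensor} to both $M$ and $M''$ (using that $e_j A \otimes_A \mathcal{F}_\bullet \simeq \mathcal{F}_j$ by Example~\ref{eg:tensoreiA} and that the tensor product commutes with finite direct sums via the universal property), together with the elementary identity $\operatorname{coker}(A \oplus B \to C) \simeq \operatorname{coker}(A \to \operatorname{coker}(B \to C))$, yields
\begin{equation*}
M'' \otimes_A \mathcal{F}_\bullet \;\simeq\; \operatorname{coker}\bigl(Q \otimes_A \mathcal{F}_\bullet \longrightarrow M \otimes_A \mathcal{F}_\bullet\bigr),
\end{equation*}
where the displayed map is induced by $\tilde{\psi}$ and hence, by functoriality of the tensor product, factors through $f \otimes_A 1\colon M' \otimes_A \mathcal{F}_\bullet \to M \otimes_A \mathcal{F}_\bullet$. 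Since $\psi$ admits a finite free presentation (again by coherence), a further application of Proposition~\ref{prop:constructTensor} shows $Q \otimes_A \mathcal{F}_\bullet \to M' \otimes_A \mathcal{F}_\bullet$ is surjective, so the images of $Q \otimes_A \mathcal{F}_\bullet$ and $M' \otimes_A \mathcal{F}_\bullet$ in $M \otimes_A \mathcal{F}_\bullet$ agree, giving exactly right exactness.

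The main obstacle I anticipate is the bookkeeping associated with compatible free presentations and the verification that the cokernel description of Proposition~\ref{prop:constructTensor} is genuinely additive in the free pieces; both points are essentially formal consequences of the universal property, but the notational overhead can obscure what is at bottom a routine argument in homological algebra.
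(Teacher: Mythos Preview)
Your proof is correct and follows essentially the same strategy as the paper's: obtain functoriality from the universal property, then reduce right exactness to the behaviour of cokernels via compatible free presentations and Proposition~\ref{prop:constructTensor}. The paper simply compresses your explicit horseshoe-style construction into the phrase ``Cartan--Eilenberg resolution $P^*_1 \to P^*_0 \to M^* \to 0$'' and replaces your iterated-cokernel identity with a one-line appeal to ``right exactness of cokernels'', but the underlying argument is the same.
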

\begin{proof}
Universality defines the $k$-linear functor so we prove right exactness. Given any exact sequence 
\begin{equation*}
   M^* : 0 \to M' \to M \to M'' \to 0
\end{equation*}
of coherent $A$-modules, we can find a Cartan-Eilenberg resolution $P^*_1 \to P^*_0 \to M^* \to 0$. Note that the $P^*_i \otimes_A \mathcal{F}_{\bullet}$  are still split exact. Hence right exactness follows from Proposition~\ref{prop:constructTensor} and right exactness of cokernels. 
\end{proof}
We have our first version of the Eilenberg-Watts theorem.
\begin{proposition}  \label{prop:EilenbergWatts}
Let $F \colon {\sf coh}(A) \to {\sf C}$ be a right exact functor. Then $F$ is naturally isomorphic to $(- ) \otimes_A F(A)_{\bullet}$ where $F(A)_{\bullet}$ is the $(A,{\sf C})$-bimodule in Example~\ref{eg:functorBimodule}. 
\end{proposition}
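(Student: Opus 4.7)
The plan is a standard Eilenberg--Watts argument adapted to the $\mathbb{Z}$-algebra setting: construct a natural transformation $\tau_M \colon M \otimes_A F(A)_\bullet \to F(M)$, verify it is an isomorphism on the representable modules $e_i A$, then extend to arbitrary coherent $M$ via right-exactness and a finite presentation.

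To construct $\tau_M$, I would use the universal property from Definition~\ref{def:TensorC}. For each $l \in \mathbb{Z}$, a morphism $\iota_l \colon M_l \otimes_k F(A)_l \to F(M)$ is the same data as a $k$-linear map $M_l \to \Hom_{{\sf C}}(F(A)_l, F(M))$. I would define this by sending $m \in M_l$ to $F(\phi_m)$, where $\phi_m \colon e_l A \to M$ is the right $A$-module homomorphism with $e_l \mapsto m$. The compatibility condition of Definition~\ref{def:TensorC}(2) amounts, for $m \in M_i$ and $a \in A_{ij}$, to checking that $F(\phi_{ma}) = F(\phi_m) \circ \mu_{ij}(a \otimes -)$ as morphisms $F(A)_j \to F(M)$. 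But $\phi_{ma} = \phi_m \circ \ell_a$, where $\ell_a \colon e_j A \to e_i A$ is left multiplication by $a$, and by the very definition of the bimodule $F(A)_\bullet$ in Example~\ref{eg:functorBimodule}, $F(\ell_a) = \mu_{ij}(a \otimes -)$. Naturality of the resulting $\tau_M$ in $M$ is then immediate from $\phi_{f(m)} = f \circ \phi_m$ for any morphism $f \colon M \to N$ in ${\sf coh}(A)$.

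To see that $\tau_M$ is an isomorphism, I would first treat the case $M = e_i A$: by Example~\ref{eg:tensoreiA} the domain is $F(A)_i = F(e_i A)$, and unwinding the construction shows $\tau_{e_i A}$ is the identity. For general coherent $M$ I would choose a finite presentation $\bigoplus_s e_{j_s} A \to \bigoplus_r e_{i_r} A \to M \to 0$ and apply both $F$ and $(-) \otimes_A F(A)_\bullet$. Both are right exact ($F$ by hypothesis, the tensor product by Proposition~\ref{prop:tensorIsFunctor}) and, being $k$-linear functors between $k$-linear abelian categories, preserve finite direct sums. Thus $\tau$ fits into a commutative ladder with exact rows in which the two leftmost vertical arrows are isomorphisms, and the five lemma forces $\tau_M$ to be an isomorphism as well.

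The main obstacle --- really the only nontrivial check --- is the compatibility computation above: the recognition that $F$ applied to the left-multiplication morphisms on $A$ reproduces the bimodule structure on $F(A)_\bullet$ recorded in Example~\ref{eg:functorBimodule}. Once this identification is in hand, the remainder is routine diagram chasing with the universal property of $\otimes_A$ and the right exactness of both functors.
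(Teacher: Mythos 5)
Your proposal is correct and follows essentially the same route as the paper: define the map via the universal property of $\otimes_A$ by sending $m \in M_l$ to $F$ applied to left multiplication $e_lA \to M$ by $m$, check the bimodule compatibility, verify the isomorphism on the $e_iA$, and conclude for general coherent $M$ by applying both right exact functors to a finite presentation. Your explicit verification of the compatibility condition via $F(\ell_a) = \mu_{ij}(a \otimes -)$ spells out a step the paper leaves to the reader, and your five-lemma ladder is just a restatement of the paper's appeal to Proposition~\ref{prop:constructTensor}.
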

\begin{proof}
We first construct the morphism $\eta_M \colon M \otimes_A F(A)_{\bullet} \to F(M)$ for any $M \in {\sf coh}(A)$ by using the universal property as follows. We define $\chi_l \colon M_l \otimes F(e_lA) \to F(M)$ to be $m \otimes F(e_lA) \xrightarrow{F(\lambda_m)} F(M)$ for any $m \in M_l$ where $\lambda_m\colon e_lA \to M$ is left multiplication by $m$. Checking the compatibility condition in Definition~\ref{def:TensorC}(2) ensures this induces the morphism $\eta_M$. The naturality of $\eta$ follows from the universal property. To show that $\eta_M$ is an isomorphism, first observe that it is an isomorphism for $M = e_iA$ and then apply the right exact functor $F$ to (\ref{eq:Mpresentation})  to identify $F(M)$ with $M \underline{\otimes}_A F(A)_{\bullet}$. The result now follows from Proposition~\ref{prop:constructTensor}.
\end{proof}
\begin{corollary} \label{cor:EilenbergWatts}
Let $A$ be the $\mathbb{Z}$-algebra associated to some ample sequence so in particular, $A$ is coherent. Let $F \colon {\sf proj}(A) \to {\sf C}$ be a right exact functor. Then $(-)\otimes_A F(\pi A)_{\bullet}$ induces a functor on ${\sf proj}(A)$ and there is a natural isomorphism $\eta \colon (-) \otimes_A F(\pi A)_{\bullet} \to F$. Here $F( \pi A)_{\bullet}$ is the $(A,{\sf C})$-bimodule of Example~\ref{eg:functorBimodule} applied to the functor $F':= F \circ \pi$. 
\end{corollary}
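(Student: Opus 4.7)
The plan is to reduce this corollary directly to Proposition~\ref{prop:EilenbergWatts}, applied to the composite functor $F' := F \circ \pi \colon {\sf coh}(A) \to {\sf C}$.

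First, I would verify that $F'$ is right exact. Since $A$ is coherent, ${\sf tors}(A)$ is a Serre subcategory of ${\sf coh}(A)$, and the quotient functor $\pi \colon {\sf coh}(A) \to {\sf proj}(A)$ is exact. Combined with the hypothesis that $F$ is right exact, this makes $F'$ right exact. Proposition~\ref{prop:EilenbergWatts} then yields a natural isomorphism
$$\eta' \colon (-)\otimes_A F'(A)_\bullet \xrightarrow{\;\sim\;} F'$$
of functors from ${\sf coh}(A)$ to ${\sf C}$. By the definition in Example~\ref{eg:functorBimodule}, the $(A,{\sf C})$-bimodule $F'(A)_\bullet = \{F'(e_i A)\} = \{F(\pi(e_i A))\}$ is precisely $F(\pi A)_\bullet$.

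Next, I would show that the functor $G := (-)\otimes_A F(\pi A)_\bullet$ descends to ${\sf proj}(A)$. By the universal property of the Serre quotient, it suffices to check that $G$ kills every torsion module $T \in {\sf tors}(A)$. But the natural isomorphism $\eta'$ gives $G(T) \cong F'(T) = F(\pi T) = F(0) = 0$, so $G$ vanishes on ${\sf tors}(A)$. Thus $G$ factors uniquely (up to natural isomorphism) through $\pi$ as $\bar{G} \circ \pi$ for some functor $\bar{G} \colon {\sf proj}(A) \to {\sf C}$; this is the functor that the corollary refers to as ``induced on ${\sf proj}(A)$.''

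Finally, to produce the natural isomorphism $\eta \colon \bar{G} \to F$ on ${\sf proj}(A)$, observe that $\eta'$ is a natural isomorphism $\bar{G} \circ \pi \to F \circ \pi$. Since $\pi$ is essentially surjective, every object of ${\sf proj}(A)$ is isomorphic to $\pi M$ for some $M \in {\sf coh}(A)$, and every morphism in ${\sf proj}(A)$ is a fraction of morphisms coming from ${\sf coh}(A)$; the components of $\eta'$ on objects $\pi M$ assemble into a well-defined natural transformation $\eta \colon \bar{G} \to F$ whose components are all isomorphisms. The main (rather mild) technical point is this last descent step, namely confirming that a natural isomorphism $\bar{G} \circ \pi \cong F \circ \pi$ descends to a natural isomorphism $\bar{G} \cong F$ along the Serre localization; this is a standard consequence of the universal property of $\pi$ together with the fact that both $\bar{G}$ and $F$ are already defined on ${\sf proj}(A)$.
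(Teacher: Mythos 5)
Your overall route is the same as the paper's: apply Proposition~\ref{prop:EilenbergWatts} to $F' = F \circ \pi$, identify $F'(A)_{\bullet}$ with $F(\pi A)_{\bullet}$, and then descend both the functor and the natural isomorphism through the quotient. The one step that is not correctly justified is the descent of $G := (-) \otimes_A F(\pi A)_{\bullet}$ to ${\sf proj}(A)$. You invoke ``it suffices to check that $G$ kills every torsion module,'' but that is not the universal property of the Serre quotient for a functor that is merely right exact. The quotient ${\sf proj}(A) = {\sf coh}(A)/{\sf tors}(A)$ is the localization of ${\sf coh}(A)$ at the class $\Sigma$ of morphisms with torsion kernel and cokernel, so the condition you actually need is that $G$ inverts every morphism in $\Sigma$. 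Annihilating the objects of ${\sf tors}(A)$ implies this for \emph{exact} functors, but not for right exact ones: a right exact $G$ killing torsion gives $G(M) \cong G(\operatorname{im} f)$ and $G(\operatorname{im} f) \to G(N)$ surjective, but not injective. As a toy illustration, on finitely generated abelian groups modulo torsion, the right exact functor $(-) \otimes_{\mathbb{Z}} \mathbb{Q}/\mathbb{Z}$ kills every torsion module yet fails to invert the inclusion $2\mathbb{Z} \hookrightarrow \mathbb{Z}$ (zero kernel, torsion cokernel), since it induces multiplication by $2$ on $\mathbb{Q}/\mathbb{Z}$, which is surjective but not injective. So ``kills torsion'' alone does not yield the factorization.

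The gap is immediately repairable with what you already have in hand, and the repair is exactly the paper's argument: the natural isomorphism $\eta' \colon G \to F' = F \circ \pi$ shows that $G$ inverts precisely the same morphisms as $F \circ \pi$ does, and $\pi$ inverts every morphism in $\Sigma$ by construction of the localization; hence $G$ inverts $\Sigma$ and factors through $\pi$. The rest of your argument --- identifying $F'(A)_{\bullet}$ with $F(\pi A)_{\bullet}$ and descending $\eta'$ to $\eta$ using essential surjectivity of $\pi$ and the calculus of fractions --- is fine.
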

\begin{proof}
Note first that the composite functor $F' \colon {\sf coh}(A) \xrightarrow{\pi} {\sf proj}(A) \xrightarrow{F} {\sf C}$ is right exact so by Proposition~\ref{prop:EilenbergWatts},there is a natural isomorphism $\eta'\colon (-)\otimes_A F'(A)_{\bullet} \to F'$. Since $F'$ maps morphisms with torsion kernel and cokernel to isomorphisms, the same is true of $(-)\otimes_A F'(A)_{\bullet}$ so it follows readily that $\eta'$ induces a natural isomorphism $\eta\colon (-)\otimes_A F(\pi A)_{\bullet} \to F$.
\end{proof}

\subsection{Functors preserving exactness of Euler sequences}
Given objects $\mathcal{F},\mathcal{G} \in {\sf C}$, recall that we have two universal morphisms 
\begin{equation*}
    \iota_{\text{univ}} \colon \mathcal{F} \to \,^*\Hom(\mathcal{F},\mathcal{G}) \otimes \mathcal{G}, \quad \pi_{\text{univ}} \colon \Hom(\mathcal{F},\mathcal{G})  \otimes\mathcal{F} \to \mathcal{G}.
\end{equation*}
Universality for $\iota_{\text{univ}}$ means that given a finite dimensional vector space $V$ and morphism $\iota \colon \mathcal{F} \to V \otimes_k \mathcal{G}$, then there exists a unique linear map $\lambda_{\iota} \colon \ ^*\Hom(\mathcal{F},\mathcal{G}) \to V$ such that $\iota = (\lambda_{\iota} \otimes \operatorname{id}_{\mathcal{G}}) \circ \iota_{\text{univ}}$. Universality for $\pi_{\text{univ}}$ is dual, any morphism $\pi\colon V \otimes \mathcal{F} \to \mathcal{G}$ factors uniquely as $\pi_{\text{univ}} \circ (\rho_{\pi} \otimes \operatorname{id}_{\mathcal{F}})$ for some linear $\rho_{\pi}$. 

\begin{proposition}  \label{prop:functorPreserving Euler}
Suppose $V$ is a finite dimensional vector space, $\mathcal{O}_0,\mathcal{O}_1,\mathcal{O}_2\in {\sf C}$ and 
\begin{equation} \label{eq:EulerTypeSeq}
    0 \to \mathcal{O}_0 \xrightarrow{\iota} V \otimes \mathcal{O}_1 \xrightarrow{\pi} \mathcal{O}_2 \to 0
\end{equation}
is an exact sequence in {\sf C}. 
\begin{enumerate}
    \item Then $\iota$ is determined, up to $\operatorname{Aut}(\mathcal{O}_0)$, by $\pi$, and $\pi$ is determined, up to $\operatorname{Aut}(\mathcal{O}_2)$, by $\iota$. 
    \item Suppose ${\sf D}$ is a $k$-linear, Hom-finite abelian category and $F \colon {\sf C \to D}$ is a $k$-linear functor which preserves exactness of (\ref{eq:EulerTypeSeq}). If $\pi$ is universal (so $V \simeq \Hom(\mathcal{O}_1,\mathcal{O}_2)$) then $F_{\mathcal{O}_1,\mathcal{O}_2}\colon \Hom(\mathcal{O}_1,\mathcal{O}_2) \to \Hom(F\mathcal{O}_1,F\mathcal{O}_2)$ is completely determined, up to $\operatorname{Aut}(F\mathcal{O}_2)$, by the map $F_{\mathcal{O}_0,\mathcal{O}_1} \colon \Hom(\mathcal{O}_0,\mathcal{O}_1) \to \Hom(F\mathcal{O}_0,F\mathcal{O}_1)$. Dually, if $\iota$ is universal, then $F_{\mathcal{O}_0,\mathcal{O}_1}$ is completely determined, up to $\operatorname{Aut}(F\mathcal{O}_0)$, by $F_{\mathcal{O}_1,\mathcal{O}_2}$.
\end{enumerate}
\end{proposition}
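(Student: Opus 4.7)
The strategy is to establish Part (1) by direct invocation of the universal properties of kernels and cokernels, and then reduce Part (2) to Part (1) by applying $F$ to the exact sequence (\ref{eq:EulerTypeSeq}) and translating the resulting indeterminacy through the universal properties of $\iota_{\text{univ}}$ and $\pi_{\text{univ}}$.

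For Part (1), exactness of (\ref{eq:EulerTypeSeq}) expresses precisely that $\iota = \ker\pi$ and $\pi = \operatorname{coker}\iota$. If two sequences share the same $\pi$, both $\iota_1,\iota_2$ present $\ker\pi$, so the universal property of the kernel yields a unique $\sigma\in\operatorname{Aut}(\mathcal{O}_0)$ with $\iota_2 = \iota_1\circ\sigma$; the symmetric argument for cokernels handles the other claim.

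For Part (2), since $V$ is finite-dimensional we have $V\otimes\mathcal{O}_1\cong\mathcal{O}_1^{\oplus\dim V}$, and additivity of $F$ provides a canonical isomorphism $F(V\otimes\mathcal{O}_1)\cong V\otimes F\mathcal{O}_1$ compatible with the universal morphisms $\iota_{\text{univ}}$ and $\pi_{\text{univ}}$. Applying $F$ to (\ref{eq:EulerTypeSeq}) yields an exact sequence
$$0\to F\mathcal{O}_0\xrightarrow{F\iota} V\otimes F\mathcal{O}_1\xrightarrow{F\pi} F\mathcal{O}_2\to 0.$$
Under the canonical identification $\Hom(\mathcal{O}_0,V\otimes\mathcal{O}_1)\cong V\otimes\Hom(\mathcal{O}_0,\mathcal{O}_1)$, if $\iota$ corresponds to some $\tilde\iota$ then $F\iota$ corresponds to $(\operatorname{id}_V\otimes F_{\mathcal{O}_0,\mathcal{O}_1})(\tilde\iota)$, so $F\iota$ is determined entirely by the linear map $F_{\mathcal{O}_0,\mathcal{O}_1}$. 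Now assume $\pi = \pi_{\text{univ}}$. Functoriality of $F$ gives $F\pi = \pi_{\text{univ}}\circ(F_{\mathcal{O}_1,\mathcal{O}_2}\otimes\operatorname{id}_{F\mathcal{O}_1})$, and Part (1) applied to the image sequence shows $F\pi$ is determined up to post-composition with some $\sigma\in\operatorname{Aut}(F\mathcal{O}_2)$ by $F\iota$, hence by $F_{\mathcal{O}_0,\mathcal{O}_1}$; the universal property of $\pi_{\text{univ}}$ then converts the ambiguity $F\pi\mapsto \sigma\circ F\pi$ precisely into $F_{\mathcal{O}_1,\mathcal{O}_2}\mapsto \sigma_*\circ F_{\mathcal{O}_1,\mathcal{O}_2}$, which is the desired statement. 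The dual assertion for $\iota = \iota_{\text{univ}}$ proceeds symmetrically, with $\iota_{\text{univ}}$ in place of $\pi_{\text{univ}}$ and pre-composition by $\operatorname{Aut}(F\mathcal{O}_0)$ in place of post-composition by $\operatorname{Aut}(F\mathcal{O}_2)$.

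The main obstacle will be careful bookkeeping: verifying that the $\operatorname{Aut}$-ambiguity produced by Part (1) corresponds exactly to pre- or post-composition on the relevant $\Hom$-space via the universal property of $\iota_{\text{univ}}$ or $\pi_{\text{univ}}$, and that the canonical identification $F(V\otimes\mathcal{O}_1)\cong V\otimes F\mathcal{O}_1$ intertwines the image under $F$ of the universal morphisms with their counterparts for $F\mathcal{O}_0, F\mathcal{O}_1, F\mathcal{O}_2$. Once these compatibilities are in hand, the argument reduces to a short diagram chase.
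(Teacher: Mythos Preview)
Your proposal is correct and follows essentially the same route as the paper's proof: Part~(1) via the universal properties of kernels and cokernels, and Part~(2) by applying $F$ to the sequence, observing that $F\iota$ is determined by $F_{\mathcal{O}_0,\mathcal{O}_1}$, that $F\pi$ is determined up to $\operatorname{Aut}(F\mathcal{O}_2)$ by $F\iota$ via Part~(1), and that $F_{\mathcal{O}_1,\mathcal{O}_2}$ is read off from $F\pi$ through the universal factorization of $\pi_{\text{univ}}$. The paper packages the last step as the commutative triangle $\rho_{F\pi} = F_{\mathcal{O}_1,\mathcal{O}_2}\circ\rho_\pi$ with $\rho_\pi$ an isomorphism, which is exactly your identity $F\pi = \pi_{\text{univ}}\circ(F_{\mathcal{O}_1,\mathcal{O}_2}\otimes\operatorname{id})$ once one sets $\rho_\pi=\operatorname{id}$.
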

\begin{proof}
Part~(1) is clear so we prove (2) and suppose that $\pi$ is universal, the case $\iota$ universal being dual. By assumption we have an exact sequence 
\begin{equation*}
    0 \to F\mathcal{O}_0 \xrightarrow{F\iota} V \otimes F\mathcal{O}_1 \xrightarrow{F\pi} F\mathcal{O}_2 \to 0.
\end{equation*}
First note that we have a commutative diagram
\begin{equation*}
\xymatrix{
V \ar[r]^(.3){\rho_{\pi}} \ar[dr]_{\rho_{F \pi}} &\Hom(\mathcal{O}_1,\mathcal{O}_2) \ar[d]^{F_{\mathcal{O}_1,\mathcal{O}_2}} \\
 & \Hom(F\mathcal{O}_1,F\mathcal{O}_2)}
\end{equation*}
Now $\rho_{\pi}$ is an isomorphism so $F_{\mathcal{O}_1,\mathcal{O}_2}$ is completely determined by $F \pi$, which, by part~(1), is in turn completely, determined, up to $\operatorname{Aut}(F \mathcal{O}_2)$ by $F\iota$. Now $F{\iota}$ is completely determined by $F_{\mathcal{O}_0,\mathcal{O}_1}$. 
\end{proof}

\subsection{The space $\mathbb{P}^{1}_{d}$ and its automorphism group} \label{sub.ncp1}
Before we classify $k$-linear autoequivalences of $\mathbb{P}^{1}_{d}$, it will be convenient to review the construction and basic properties of this space.  Recall that, in \cite{piont}, Piontkovski proves that a $\mathbb{Z}$-graded $k$-algebra 
$$
A=k\langle x_{1}, \ldots, x_{d} \rangle / ( \Sigma_{i=1}^{d}x_{i}\sigma (x_{d-i}) )
$$ 
where $\sigma$ is a graded automorphism of the free algebra, is graded coherent.  Piontkovski defines $\mathbb{P}^{1}_{d}$ as the quotient of the category of graded coherent modules over $A$ modulo the full subcategory of finite-dimensional modules.   

Since $A$ is $\mathbb{Z}$-graded, the category of graded right $A$-modules has a graded shift functor, which descends to a functor $[-]$ in $\mathbb{P}^{1}_{d}$.  By \cite[Section 8]{abstract}, the sequence $\mathcal{L}_{i} := \pi(A [i])$, is ample for $\mathbb{P}^{1}_{d}$, so that there is an equivalence between $\mathbb{P}^{1}_{d}$ and ${\sf proj }\mathbb{S}^{nc}(\underline{\mathcal{L}})$.  Finally, by \cite[Proposition 3.6(1)]{CN}, $\mathbb{S}^{nc}(\underline{\mathcal{L}}) \cong \mathbb{S}^{nc}(\operatorname{Hom}(\mathcal{L}_{-1}, \mathcal{L}_{0}))$, where, for a finite dimensional vector space $V$, $\mathbb{S}^{nc}(V)$ denotes M. Van den Bergh's noncommutative symmetric algebra \cite{vdbp1bundle}.  Since $\operatorname{Hom}(\mathcal{L}_{-1}, \mathcal{L}_{0}) \cong A_{1}$, fixing a basis for $\operatorname{Hom}(\mathcal{L}_{-1}, \mathcal{L}_{0})$ induces an isomorphism $\mathbb{S}^{nc}(\operatorname{Hom}(\mathcal{L}_{-1}, \mathcal{L}_{0})) \longrightarrow \mathbb{S}^{nc}(k^{d})$.  Putting these maps together gives us an equivalence 
$$
{\sf proj }\mathbb{S}^{nc}(k^{d}) \longrightarrow \mathbb{P}^{1}_{d}.
$$
We will often write $\mathbb{S}$ for $\mathbb{S}^{nc}(k^d)$.  

For $i \in \mathbb{Z}$, we let $\mathcal{O}(i)$ denote the object $\pi e_{-i}\mathbb{S} \cong \mathcal{L}_{i}$, and we abuse notation by writing $\mathcal{O}$ for $\mathcal{O}(0)$.  In particular, the sequence $(\mathcal{O}(i))_{i \in \mathbb{Z}}$ is an ample helix for ${\sf proj }\mathbb{S}$. The ``helices'' are the Euler exact sequences
\begin{equation}  \label{eq:EulerSeq}
0 \to \mathcal{O}(i) \to k^d \otimes_k \mathcal{O}(i+1) \to \mathcal{O}(i+2) \to 0
\end{equation}
for $i \in \mathbb{Z}$. The morphisms in this helix are universal so that Proposition~\ref{prop:functorPreserving Euler}(2) applies. 

\begin{proposition}  \label{prop.autoP1d}
The group of $k$-linear auto-equivalences of ${\sf proj}\,\mathbb{S}$ is isomorphic to $\mathbb{Z} \times PGL_d$. The factor $\mathbb{Z}$ is generated by graded shifts whilst $PGL_d$ corresponds to automorphisms of $k^d$.
\end{proposition}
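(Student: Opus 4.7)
The plan is to construct a homomorphism $\Phi \colon \mathbb{Z} \times PGL_d \to \mathrm{Autoeq}({\sf proj}\,\mathbb{S})$ sending $n$ to the graded shift $[n]$ and $[g] \in PGL_d$ to the autoequivalence $\sigma_g$ induced by the graded $\mathbb{Z}$-algebra automorphism $\mathbb{S}^{nc}(g)$ arising from the functoriality of $\mathbb{S}^{nc}(-)$ in its vector space argument. Well-definedness on $PGL_d$ follows because a scalar matrix $\lambda I$ acts on $\mathbb{S}_{ij}$ by a uniform power of $\lambda$, producing a natural isomorphism $\sigma_{\lambda I} \cong \mathrm{id}$; shifts commute with the $\sigma_g$ because the shift only reindexes the idempotents $e_i$ and does not interact with the multiplication encoded by $k^d$. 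Injectivity of $\Phi$ is then immediate: if $\Phi(n,[g]) \cong \mathrm{id}$, then $\mathcal{O}(n) \cong \mathcal{O}$, which forces $n=0$ since $\dim \Hom(\mathcal{O},\mathcal{O}(m))$ distinguishes the isomorphism classes of the $\mathcal{O}(m)$; thereafter $g$ must act by scalars on $\Hom(\mathcal{O},\mathcal{O}(1)) \cong k^d$, forcing $[g]=1$.

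The substantive content is surjectivity. Given a $k$-linear autoequivalence $F$, the equivalence property guarantees that $F$ preserves $\operatorname{Ext}$-vanishing, the exceptional property, ampleness, and mutation, so $(F\mathcal{O}(i))_{i \in \mathbb{Z}}$ is again an ample helix in ${\sf proj}\,\mathbb{S}$ with the Hom-dimensions and Euler sequences of $(\mathcal{O}(i))$. The hard part will be the first step: showing $F\mathcal{O} \cong \mathcal{O}(n)$ for some $n \in \mathbb{Z}$. This amounts to the structural claim that, up to graded shift, the only ample helix in ${\sf proj}\,\mathbb{S}$ with $d$-dimensional consecutive Hom-spaces is the standard one, equivalently, that the exceptional objects of ${\sf proj}\,\mathbb{S}$ are precisely the $\mathcal{O}(i)$; I would try to extract this from Piontkovski's structural analysis of $\mathbb{P}^1_d$ combined with the fact that ${\sf proj}\,\mathbb{S}$ has cohomological dimension one. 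Granting this, composing $F$ with $[-n]$ reduces to the case $F\mathcal{O} \cong \mathcal{O}$. Applying $F$ to the Euler sequence (\ref{eq:EulerSeq}) at $i=-1$ gives an exact sequence of the form in Proposition~\ref{prop:functorPreserving Euler}(1) with middle term $k^d \otimes \mathcal{O}$; inductively propagating isomorphisms in both directions via that proposition and the same Euler sequences then yields $F\mathcal{O}(i) \cong \mathcal{O}(i)$ for every $i$.

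Finally, Corollary~\ref{cor:EilenbergWatts} produces a natural isomorphism $F \cong (-) \otimes_{\mathbb{S}} F(\pi\mathbb{S})_{\bullet}$, where the bimodule structure is recorded by the linear maps $F_{\mathcal{O}(i),\mathcal{O}(j)} \colon \Hom(\mathcal{O}(i),\mathcal{O}(j)) \to \Hom(F\mathcal{O}(i),F\mathcal{O}(j)) = \Hom(\mathcal{O}(i),\mathcal{O}(j))$. By repeated application of Proposition~\ref{prop:functorPreserving Euler}(2) to the Euler sequences (\ref{eq:EulerSeq}) (the epimorphisms $k^d \otimes \mathcal{O}(i) \to \mathcal{O}(i+1)$ being universal), the entire family of maps $F_{\mathcal{O}(i),\mathcal{O}(j)}$ is determined, modulo the scalar automorphisms of the $\mathcal{O}(i)$'s, by the single automorphism $g := F_{\mathcal{O},\mathcal{O}(1)} \in GL(k^d)$. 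The scalar ambiguity is absorbed on passage to $PGL_d$, and $F$ is then naturally isomorphic to $\sigma_g \circ [n] = \Phi(n,[g])$, completing the proof of surjectivity.
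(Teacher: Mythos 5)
Your overall architecture matches the paper's: reduce via the Eilenberg--Watts result (Corollary~\ref{cor:EilenbergWatts}) to determining the bimodule $F\mathcal{O}(\bullet)$, then use the rigidity of the Euler sequences (Proposition~\ref{prop:functorPreserving Euler}) together with the fact that $\mathbb{S}$ is generated in degree one to pin the bimodule down, up to scalars of the $\mathcal{O}(i)$, by a single element of $GL_d$. That part of your argument is sound and is essentially what the paper does; your explicit treatment of injectivity of $\mathbb{Z}\times PGL_d\to\operatorname{Aut}$ is a detail the paper leaves as ``clear.''

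However, there is a genuine gap at the step you yourself flag as the hard part: showing $F\mathcal{O}\cong\mathcal{O}(n)$ for some $n$, i.e.\ that $F$ can only shift the family $\{\mathcal{O}(i)\}$. You do not prove this; you reformulate it (as the claim that the $\mathcal{O}(i)$ are the only exceptional objects, or that the standard helix is the only ample helix with $d$-dimensional consecutive Hom spaces) and say you would try to extract it from Piontkovski's analysis plus cohomological dimension one. That is a plan, not an argument, and it is not clear it can be carried out as stated: for $d>2$ no classification of exceptional objects of ${\sf proj}\,\mathbb{S}$ is available in the sources you point to, and classifying ample helices is exactly the sort of nontrivial problem this paper spends several sections on in the elliptic case. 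The paper closes the gap by a concrete, different mechanism: it invokes the torsion theory $({\sf T},{\sf F})$ of \cite[Corollary~5.5]{CNrepStudy}, where ${\sf F}$ is the additive closure of the $\mathcal{O}(i)$ and ${\sf T}$ consists of torsion sheaves, notes that both are stable under the Serre functor $-\otimes\omega$ with $\mathcal{O}(i)\otimes\omega\cong\mathcal{O}(i-2)$, and thereby characterizes the $\mathcal{O}(i)$ intrinsically as the indecomposable objects $\mathcal{F}$ for which $(\mathcal{F}\otimes\omega^n)_{n\in\mathbb{Z}}$ is ample --- a property manifestly preserved by any autoequivalence; a Hom-dimension count then shows the induced permutation of the $\mathcal{O}(i)$ is a shift. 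Without this (or an actual substitute), your surjectivity argument does not get off the ground. A secondary point: even granting $F\mathcal{O}\cong\mathcal{O}$ alone, your proposed inductive propagation along Euler sequences needs $F$ to carry the canonical injection $\mathcal{O}(-1)\to k^d\otimes\mathcal{O}$ to one with the canonical cokernel, which is why the paper first establishes the shift statement for the whole family at once before invoking Proposition~\ref{prop:functorPreserving Euler}.
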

\begin{proof}
It is clear that the automorphism group of ${\sf proj}\,\mathbb{S}$ contains the copy of $\mathbb{Z} \times PGL_d$ described above so it suffices to show that every auto-equivalence $F$ is generated by graded shifts and automorphisms of $k^d$. By Corollary~\ref{cor:EilenbergWatts}, we know that $F$ is naturally isomorphic to $(-) \otimes_{\mathbb{S}} F\mathcal{O}(\bullet)$ where $F\mathcal{O}(\bullet)$ is the $(\mathbb{S},{\sf proj}(\mathbb{S}))$-bimodule $\{F\mathcal{O}(-i)\}$ with scalar multiplication by $\mathbb{S}_{ij} = \Hom(\mathcal{O}(-j),\mathcal{O}(-i))$ defined by 
\begin{equation*}
F_{\mathcal{O}(-j),\mathcal{O}(-i)} \colon \Hom(\mathcal{O}(-j),\mathcal{O}(-i)) \to \Hom(F\mathcal{O}(-j),F\mathcal{O}(-i))
\end{equation*}

Our first goal is to show that $F$ can only shift the sequence $\{ \mathcal{O}(i) | i \in \mathbb{Z}\}$ in the sense that there is some $m \in \mathbb{Z}$ such that $F \mathcal{O}(i) \simeq \mathcal{O}(i+m)$ for all $i \in \mathbb{Z}$. We need to recall some facts from \cite{CNrepStudy}. Let $\sf F$ be the full subcategory of ${\sf proj}\, \mathbb{S}$ of objects isomorphic to direct sums of the $\mathcal{O}(i)$. Then there is a torsion theory $({\sf T, F})$ where ${\sf T}$ consists of torsion sheaves by \cite[Corollary~5.5]{CNrepStudy}. Furthermore, both ${\sf F, T}$ are stable under the Serre functor $\otimes \omega$ and in fact we have $\mathcal{O}(i) \otimes \omega \simeq \mathcal{O}(i-2)$. Thus the $\mathcal{O}(i)$ can be characterised as those indecomposable sheaves $\mathcal{F}$ for which $(\mathcal{F} \otimes \omega^n)_{n \in \mathbb{Z}}$ is an ample sequence. This proves that $F$ must permute the $\mathcal{O}(i)$ and computing Hom spaces, one sees that $F$ can only shift them.

We may thus shift and assume that there are isomorphisms $\eta_i \colon F \mathcal{O}(i) \xrightarrow{\sim} \mathcal{O}(i)$ for all $i$.  Now $F$ induces an automorphism $\Phi$ of $\operatorname{Hom}(\mathcal{O}(-1), \mathcal{O})$, i.e. an element of $GL_{d}$. The Euler exact sequences (\ref{eq:EulerSeq}) mean that we can apply Proposition~\ref{prop:functorPreserving Euler} and conclude that, up to adjusting the $\eta_i$,  $\Phi$ determines all the $F_{\mathcal{O}(-j-1),\mathcal{O}(-j)}$ for all $j$. Now $\mathbb{S}$ is generated in degree 1 so the isomorphism class of the $(\mathbb{S},{\sf proj}(\mathbb{S}))$-bimodule $F\mathcal{O}(\bullet)$ is now completely determined by $\Phi$, so that the result follows from Corollary \ref{cor:EilenbergWatts}.
\end{proof}

\begin{remark} \label{rem.ellipticaut}
Unfortunately, we have not been able to compute the automorphism group of the noncommutative elliptic curve ${\sf C}^{\theta}$. We do know quite a bit because Polischchuk has classified the automorphism group of $D^b(X)$. To understand his result, we recall that the degree and rank map give a surjective group homomorphism $K_0(X) \to \mathbb{Z}^2$. There is thus a group homomorphism $\Psi \colon \operatorname{Aut}D^b(X) \to SL_2(\mathbb{Z})$. By \cite[Theorem~1.2]{polish}, the kernel of this map is generated by automorphisms of the genus one curve $X$ (as an algebraic variety as opposed to a group variety), tensoring by degree 0 line bundles and cohomological shifts in even degree. Automorphisms in this kernel of course also restrict to automorphisms of ${\sf C}^{\theta}$. The image of $\operatorname{Aut}\, {\sf C}^{\theta}$ under $\Psi$ should be the stabilizer $\overline{\operatorname{Aut}} {\sf C}^{\theta}$ in $SL_2(\mathbb{Z})$ of the half-plane where the slope is $> \theta$. Identifying $\overline{\operatorname{Aut}} {\sf C}^{\theta}$ explicitly appears to be a number-theoretic problem we have not solved.
\end{remark}

\subsection{Double covers from two-periodic elliptic helices} Now suppose $\underline{\mathcal{E}}$ is a two-periodic elliptic helix with 
$$
\operatorname{dim }\operatorname{Hom}(\mathcal{E}_{-1},\mathcal{E}_{0}) = d>2.
$$  
Let $\theta$ equal the negative limit slope of $\underline{\mathcal{E}}$.  By \cite[Corollary 3.7]{CN}, $\underline{\mathcal{E}}$ induces a map of $\mathbb{Z}$-algebras
$$
f_{\underline{\mathcal{E}}}: \mathbb{S}^{nc}(\underline{\mathcal{E}}) \rightarrow B_{\underline{\mathcal{E}}},
$$
The functor $f_{\underline{\mathcal{E}}}$ induces a restriction of scalars functor
$$
{f_{\underline{\mathcal{E}}}}_{*}:  {\sf coh }B_{\underline{\mathcal{E}}} \longrightarrow {\sf coh} \mathbb{S}^{nc}(\underline{\mathcal{E}}),
$$
and by \cite[Corollaire 2, p. 368]{gab}, ${f_{\underline{\mathcal{E}}}}_{*}$ descends to a functor
$$
{\sf proj }B_{\underline{\mathcal{E}}} \rightarrow {\sf proj }\mathbb{S}^{nc}(\underline{\mathcal{E}}).
$$
In what follows, we will abuse notation by writing ${f_{\underline{\mathcal{E}}}}_{*}$ for the functor between quotient categories.

On the other hand, since $\underline{\mathcal{E}}$ is ample for ${\sf C}^{\theta}$ by Proposition \ref{prop.smallample}, the composition of the first two functors in
$$
{\sf C}^{\theta} \overset{\Gamma_{\underline{\mathcal{E}}}}{\longrightarrow} {\sf coh }B_{\underline{\mathcal{E}}} \overset{\pi}{\longrightarrow} {\sf proj }B_{\underline{\mathcal{E}}} \overset{{f_{\underline{\mathcal{E}}}}_{*}}{\longrightarrow}  {\sf proj }\mathbb{S}^{nc}(\underline{\mathcal{E}})
$$
is an equivalence.  We denote the triple composition by $F_{\underline{\mathcal{E}}}$, and, as in the proof of \cite[Theorem 1.1]{CN}, $F_{\underline{\mathcal{E}}}$ is a double cover in a suitable sense (see \cite[Corollary 5.8]{CN}).  We further remark that, by \cite[Proposition 3.6(1)]{CN} and Remark \ref{remark.helix}(3), $\mathbb{S}^{nc}(\underline{\mathcal{E}}) \cong \mathbb{S}^{nc}(\operatorname{Hom}(\mathcal{E}_{-1}, \mathcal{E}_{0}))$.  Since fixing a basis for $\operatorname{Hom}(\mathcal{E}_{-1}, \mathcal{E}_{0})$ yields an equivalence ${\sf proj }\mathbb{S}^{nc}(\operatorname{Hom}(\mathcal{E}_{-1}, \mathcal{E}_{0})) \longrightarrow {\sf proj }\mathbb{S}^{nc}(k^{d})$, and since, as we will see below, ${\sf proj }\mathbb{S}^{nc}(k^{d})$ is equivalent to $\mathbb{P}^{1}_{d}$, we will sometimes abuse notation by writing our double cover as
\begin{equation} \label{eqn.maptop1}
F_{\underline{\mathcal{E}}}:{{\sf C}^{\theta} \longrightarrow \mathbb{P}^{1}_{d}}
\end{equation}

\subsection{Comparison of double covers}
In this section, we prove our noncommutative version of \cite[Chapter IV, Lemma 4.4]{hartshorne} classifying $g^1_2$'s. 

We first consider the effect of composing the double cover morphism $F_{\underline{\mathcal{E}}}$ with automorphisms of $\mathbb{P}^1_d$ and ${\sf C}^{\theta}$. This is best approached using the adjoint $f_{\underline{\mathcal{E}}}^*$ to $f_{\underline{\mathcal{E}}*}$ as recalled in the following result.
\begin{proposition}  \label{prop:pullbackFunctor}
The internal tensor functor $- \underline{\otimes}_{\mathbb{S}} B_{\underline{\mathcal{E}}}$ induces a left adjoint 
$$
f_{\underline{\mathcal{E}}}^* \colon {\sf proj}\,\mathbb{S} \to {\sf proj}\, B_{\underline{\mathcal{E}}}
$$ 
to $f_{\underline{\mathcal{E}}*}$. In particular, $F_{\underline{\mathcal{E}}}$ has a left adjoint, say $G_{\underline{\mathcal{E}}}$ and $G_{\underline{\mathcal{E}}} \mathcal{O}(i) \cong \mathcal{E}_i$. The functor $G_{\underline{\mathcal{E}}}$ is determined, up to natural isomorphism by the isomorphism classes of the objects $\mathcal{E}_i$ and the isomorphism $\phi \colon k^d \simeq \operatorname{Hom}(\mathcal{E}_{-1}, \mathcal{E}_{0})$ up to scalar.
\end{proposition}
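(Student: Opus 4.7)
The plan is to realize $f_{\underline{\mathcal{E}}}^*$ as tensoring with an explicit bimodule, use the equivalence $\pi\circ\Gamma_{\underline{\mathcal{E}}}$ to produce $G_{\underline{\mathcal{E}}}$, and then invoke the Eilenberg--Watts theorem from Section~\ref{section.ew} for uniqueness. First I would upgrade the $\mathbb{Z}$-algebra morphism $f_{\underline{\mathcal{E}}}\colon \mathbb{S}\to B_{\underline{\mathcal{E}}}$ into an $(\mathbb{S},{\sf proj}\,B_{\underline{\mathcal{E}}})$-bimodule $B_{\underline{\mathcal{E}}}(\bullet):=\{\pi e_i B_{\underline{\mathcal{E}}}\}_{i\in\mathbb{Z}}$ in the sense of Definition~\ref{def:AmoduleInC}, where the $\mathbb{S}$-scalars act via $f_{\underline{\mathcal{E}}}$ followed by left multiplication on $B_{\underline{\mathcal{E}}}$. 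Proposition~\ref{prop:tensorIsFunctor} then provides a right-exact $k$-linear functor $M \mapsto M\underline{\otimes}_{\mathbb{S}}B_{\underline{\mathcal{E}}}(\bullet)$ from ${\sf coh}\,\mathbb{S}$ to ${\sf proj}\,B_{\underline{\mathcal{E}}}$; I would check that right-bounded (i.e.\ torsion) coherent $\mathbb{S}$-modules are sent to the zero object of ${\sf proj}\,B_{\underline{\mathcal{E}}}$, which allows the functor to descend through $\pi$ to the desired $f_{\underline{\mathcal{E}}}^*\colon {\sf proj}\,\mathbb{S}\to {\sf proj}\,B_{\underline{\mathcal{E}}}$.

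For the adjunction, Example~\ref{eg:tensoreiA} gives $e_i\mathbb{S}\otimes_{\mathbb{S}}B_{\underline{\mathcal{E}}}(\bullet)\cong \pi e_i B_{\underline{\mathcal{E}}}$, so for $N\in{\sf proj}\,B_{\underline{\mathcal{E}}}$ the natural Yoneda isomorphism $\operatorname{Hom}(\pi e_i B_{\underline{\mathcal{E}}},N)\cong N_i\cong \operatorname{Hom}(\pi e_i\mathbb{S},f_{\underline{\mathcal{E}}*}N)$ establishes the adjunction for the generators $\pi e_i \mathbb{S}$; right-exactness of both sides of the adjunction in $M$, together with presentations of the form~(\ref{eq:Mpresentation}), then extends the isomorphism to all $M\in{\sf proj}\,\mathbb{S}$. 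Because $\underline{\mathcal{E}}$ is ample (Proposition~\ref{prop.smallample}), $\pi\circ\Gamma_{\underline{\mathcal{E}}}$ is an equivalence, and setting $G_{\underline{\mathcal{E}}}:=(\pi\circ\Gamma_{\underline{\mathcal{E}}})^{-1}\circ f_{\underline{\mathcal{E}}}^*$ yields the left adjoint of $F_{\underline{\mathcal{E}}}$. A direct computation then gives $f_{\underline{\mathcal{E}}}^*\mathcal{O}(i)\cong \pi e_{-i} B_{\underline{\mathcal{E}}}$, whose degree-$j$ component is $\operatorname{Hom}(\mathcal{E}_{-j},\mathcal{E}_i)=\Gamma_{\underline{\mathcal{E}}}(\mathcal{E}_i)_j$, whence $G_{\underline{\mathcal{E}}}\mathcal{O}(i)\cong \mathcal{E}_i$.

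For uniqueness, Corollary~\ref{cor:EilenbergWatts} identifies $G_{\underline{\mathcal{E}}}$, up to natural isomorphism, with tensoring by the $(\mathbb{S},{\sf C}^\theta)$-bimodule whose $i$-th object is $G_{\underline{\mathcal{E}}}(\mathcal{O}(-i))\cong \mathcal{E}_{-i}$ and whose scalar maps are $\mathbb{S}_{ij}\otimes_k \mathcal{E}_{-j}\to \mathcal{E}_{-i}$. Because $\mathbb{S}=\mathbb{S}^{nc}(k^d)$ is generated in degree one as a $\mathbb{Z}$-algebra, this bimodule structure is pinned down by the single degree-one map $\mathbb{S}_{01}\otimes_k \mathcal{E}_{-1}\to \mathcal{E}_0$, which factors as evaluation composed with $\phi\otimes \mathrm{id}$; rescaling $\phi$ by $\lambda\in k^\times$ yields an isomorphic bimodule via an appropriate rescaling automorphism of the sequence $\{\mathcal{E}_{-i}\}$, so $G_{\underline{\mathcal{E}}}$ depends on $\phi$ only up to scalar, as claimed. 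The main technical obstacle I anticipate lies in the first step --- verifying that $-\underline{\otimes}_{\mathbb{S}} B_{\underline{\mathcal{E}}}(\bullet)$ genuinely annihilates the torsion subcategory and thereby descends to ${\sf proj}\,\mathbb{S}$, and that the adjunction passes cleanly to the Serre quotient --- because the tensor of Definition~\ref{def:TensorC} is given by a universal property rather than an explicit direct sum, so the verification must proceed through the explicit presentation~(\ref{eq:tensorViaPresentation}) of Proposition~\ref{prop:constructTensor} and a careful check that right-bounded $\mathbb{S}$-modules are mapped to right-bounded (hence torsion) $B_{\underline{\mathcal{E}}}$-modules.
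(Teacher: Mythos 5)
Your overall architecture (an explicit bimodule realizing the adjoint, plus Eilenberg--Watts for the uniqueness statement) matches the paper's, but there is a genuine gap in the uniqueness step. You assert that because $\mathbb{S}$ is generated in degree one, the $(\mathbb{S},{\sf C}^{\theta})$-bimodule structure on $\{\mathcal{E}_{-i}\}$ ``is pinned down by the single degree-one map $\mathbb{S}_{01}\otimes_k\mathcal{E}_{-1}\to\mathcal{E}_0$.'' Generation in degree one only tells you the structure is determined by \emph{all} of the maps $\mu_{i,i+1}\colon\mathbb{S}_{i,i+1}\otimes_k\mathcal{E}_{-i-1}\to\mathcal{E}_{-i}$, $i\in\mathbb{Z}$, and a priori two bimodules could agree in the $(0,1)$ component yet differ in the others. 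The nontrivial content of the uniqueness claim is precisely that the relations of $\mathbb{S}$ force each $\mu_{i,i+1}$ to be determined, up to an automorphism of the target, by $\mu_{01}$. The paper obtains this by observing that $G_{\underline{\mathcal{E}}}$ carries the Euler sequences (\ref{eq:EulerSeq}) to the exact sequences (\ref{eqn.eulerelliptic}) of Remark~\ref{remark.helix}(3), so that Proposition~\ref{prop:functorPreserving Euler} applies, and then arguing by induction in both directions from degree $(0,1)$; since $\operatorname{End}(\mathcal{E}_i)=k$, the ambiguities are scalars which can be absorbed into a bimodule isomorphism. Your proposal never invokes Proposition~\ref{prop:functorPreserving Euler} or the exactness of the image Euler sequences, so the key step is missing.

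A secondary remark: for the first assertion the paper simply cites \cite[Theorem~4.10]{morph}, whereas you rebuild $f_{\underline{\mathcal{E}}}^*$ from scratch. Your sketch is plausible, but the Yoneda step $\operatorname{Hom}(\pi e_iB_{\underline{\mathcal{E}}},N)\cong N_i$ is an identity in ${\sf coh}\,B_{\underline{\mathcal{E}}}$, not automatically in the Serre quotient ${\sf proj}\,B_{\underline{\mathcal{E}}}$, where Hom-spaces are colimits over torsion subobjects and quotients; establishing the adjunction there requires either a saturation argument or ampleness, which is exactly the content of the cited theorem. You correctly flag the descent-through-torsion issue, but this quotient-category Hom computation is an additional point that would need care if you insist on the self-contained route.
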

\begin{proof}
The existence and description of the left adjoint of $f_{\underline{\mathcal{E}}*}$ is \cite[Theorem~4.10]{morph}. Applying the inverse to the natural isomorphism $\pi \circ \Gamma_{\underline{\mathcal{E}}}\colon {\sf C}^{\theta} \to {\sf proj}B_{\underline{\mathcal{E}}}$ to $B_{\underline{\mathcal{E}}}$ shows that $G_{\underline{\mathcal{E}}} \mathcal{O}(i) \cong \mathcal{E}_i$ and in fact, $G_{\underline{\mathcal{E}}}$ is naturally isomorphic to $(-) \otimes_{\mathbb{S}} \underline{\mathcal{E}}$ where $\underline{\mathcal{E}}$ is an $(\mathbb{S},{\sf C}^{\theta})$-bimodule in the obvious way. 
It remains only to show that the isomorphism class of the $(\mathbb{S},{\sf C}^{\theta})$-bimodule $\underline{\mathcal{E}}$ is determined by the isomorphism classes of the $\mathcal{E}_i$ and $\phi$. By (\ref{eq:EulerSeq}) and Remark \ref{remark.helix}(3), Proposition~\ref{prop:functorPreserving Euler} applies to $G_{\underline{\mathcal{E}}}$ so that we can follow the proof of Proposition~\ref{prop.autoP1d}. More precisely, $\phi$ determines the module action of $\mathbb{S}_{01}$ on $\underline{\mathcal{E}}$ and by Proposition~\ref{prop:functorPreserving Euler} and induction, this also determines the module action of all $\mathbb{S}_{i,i+1}$. Since $\mathbb{S}$ is generated in degree 1, we are done. 
\end{proof}

\begin{remark}
It follows from Proposition \ref{prop:pullbackFunctor} that if $\underline{\mathcal{E}}$ and $\underline{\mathcal{E}}'$ are two-periodic elliptic helices which are not in the same numerical class, then the functors $F_{\underline{\mathcal{E}}}$ and $F_{\underline{\mathcal{E}}'}$ are not isomorphic. 
\end{remark}

Proposition \ref{prop:pullbackFunctor} allows us to track what happens to $F_{\underline{\mathcal{E}}}$ when you compose with automorphisms of $\mathbb{P}^1_d$ and ${\sf C}^{\theta}$. We omit the proof of the following result which is a corollary of Proposition~\ref{prop:pullbackFunctor} or more precisely, its proof.
\begin{corollary}  \label{cor:gradedShiftIsHelixShift}
Let $(-)[n]$ be the graded shift by $n$ functor on ${\sf coh}\, \mathbb{P}^1_d$. Then the composite functor $(F_{\underline{\mathcal{E}}}(-))[n]$ is naturally isomorphic to the functor $F_{\underline{\mathcal{E}'}}$ where $\underline{\mathcal{E}'}$ is the shifted helix defined by $\mathcal{E}'_i = \mathcal{E}_{i-n}$.  
\end{corollary}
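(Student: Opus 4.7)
The strategy is to pass to left adjoints and invoke the uniqueness clause of Proposition~\ref{prop:pullbackFunctor}, which determines the left adjoint $G_{\underline{\mathcal{F}}}$ of a double cover $F_{\underline{\mathcal{F}}}$ up to natural isomorphism from the isomorphism classes $G_{\underline{\mathcal{F}}}\mathcal{O}(i) \cong \mathcal{F}_i$ together with the induced isomorphism $\phi\colon k^d \simeq \operatorname{Hom}(\mathcal{F}_{-1},\mathcal{F}_0)$ up to scalar. Since both $F_{\underline{\mathcal{E}}}(-)[n]$ and $F_{\underline{\mathcal{E}'}}$ admit left adjoints (by Proposition~\ref{prop:pullbackFunctor} and the fact that graded shift is an autoequivalence), and since right adjoints are determined up to unique natural isomorphism by their left adjoints, it will suffice to establish the analogous isomorphism at the level of left adjoints.

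The first step is to identify the left adjoint of $(F_{\underline{\mathcal{E}}}(-))[n] = (-)[n] \circ F_{\underline{\mathcal{E}}}$. By Proposition~\ref{prop.autoP1d}, graded shift is an autoequivalence of ${\sf proj}\,\mathbb{S}$ with quasi-inverse $(-)[-n]$ sending $\mathcal{O}(i)$ to $\mathcal{O}(i-n)$. Consequently the left adjoint in question is the composite $G_{\underline{\mathcal{E}}}\circ (-)[-n]$, whose value at the distinguished objects is
$$
G_{\underline{\mathcal{E}}}(\mathcal{O}(i)[-n]) \cong G_{\underline{\mathcal{E}}}(\mathcal{O}(i-n)) \cong \mathcal{E}_{i-n} = \mathcal{E}'_i,
$$
so that the candidate left adjoints agree on objects.

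The remaining step is to match the basis data. Applying $(-)[-n]$ and then $G_{\underline{\mathcal{E}}}$ to the tautological basis of $\operatorname{Hom}(\mathcal{O}(-1),\mathcal{O}(0))$ produces an isomorphism $\phi'\colon k^d \simeq \operatorname{Hom}(\mathcal{E}_{-1-n},\mathcal{E}_{-n}) = \operatorname{Hom}(\mathcal{E}'_{-1},\mathcal{E}'_0)$, and we take this $\phi'$ as the defining basis for $F_{\underline{\mathcal{E}'}}$. Proposition~\ref{prop:pullbackFunctor} then yields $G_{\underline{\mathcal{E}}}\circ(-)[-n] \simeq G_{\underline{\mathcal{E}'}}$, and passing to right adjoints gives the desired natural equivalence $(F_{\underline{\mathcal{E}}}(-))[n] \simeq F_{\underline{\mathcal{E}'}}$. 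The only delicate point is the bookkeeping that the $\phi'$ arising from the shift genuinely agrees, up to scalar, with the one used in the definition of $F_{\underline{\mathcal{E}'}}$; but since the corollary is silent about the precise choice of basis and different choices give $F_{\underline{\mathcal{E}'}}$ differing only by an element of $PGL_d$, this is absorbed into the natural isomorphism.
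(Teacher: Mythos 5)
Your proposal is correct and is essentially the argument the paper intends: the authors omit the proof, remarking only that the corollary follows from Proposition~\ref{prop:pullbackFunctor} ``or more precisely, its proof,'' and your passage to left adjoints, identification $G_{\underline{\mathcal{E}}}\circ(-)[-n](\mathcal{O}(i))\cong\mathcal{E}_{i-n}$, and matching of the degree-one data $\phi'$ is exactly that route. The only point worth making explicit is that you are invoking the \emph{proof} of Proposition~\ref{prop:pullbackFunctor} rather than its bare statement, since one must know that the composite $G_{\underline{\mathcal{E}}}\circ(-)[-n]$ is a right exact functor preserving the Euler sequences (\ref{eq:EulerSeq}) so that the Eilenberg--Watts/uniqueness argument applies to it; this is immediate because $(-)[-n]$ is an exact autoequivalence permuting the Euler sequences.
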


We now turn to composing with automorphisms of ${\sf C}^{\theta}$. The result of relevance for us is the following, which generalizes \cite[Chapter IV, Lemma 4.4]{hartshorne}.

\begin{proposition}  \label{prop.twistTranslateHelix}
Let $\underline{\mathcal{E}}, \underline{\mathcal{F}}$ be two-periodic helices such that $\deg \mathcal{E}_i = \deg \mathcal{F}_i=: d_i, \ \rk \mathcal{E}_i = \rk \mathcal{F}_i=: r_i$ for $i = -1, 0$ (and hence all $i \in \mathbb{Z}$). Then there is an auto-equivalence $F$ of ${\sf C}^{\theta}$ such that $F_{\underline{\mathcal{E}}} \circ F$ is naturally isomorphic to $F_{\underline{\mathcal{F}}}$ (for some appropriate isomorphism $\operatorname{Hom}(\mathcal{E}_{-1}, \mathcal{E}_0) \simeq \operatorname{Hom}(\mathcal{F}_{-1}, \mathcal{F}_0)$).
\end{proposition}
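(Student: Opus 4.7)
The plan is to pass to left adjoints via Proposition~\ref{prop:pullbackFunctor}, reducing the problem to producing an autoequivalence $\sigma$ of ${\sf C}^{\theta}$ satisfying $\sigma(\mathcal{E}_i) \cong \mathcal{F}_i$ for all $i$, and then to construct $\sigma$ out of translations on $X$ and twists by degree zero line bundles.

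First I would take left adjoints: an autoequivalence $F$ of ${\sf C}^{\theta}$ yields $F_{\underline{\mathcal{E}}} \circ F \cong F_{\underline{\mathcal{F}}}$ exactly when $\sigma := F^{-1}$ satisfies $\sigma \circ G_{\underline{\mathcal{E}}} \cong G_{\underline{\mathcal{F}}}$, where $G_{\bullet}$ denotes the left adjoints supplied by Proposition~\ref{prop:pullbackFunctor}. That proposition characterizes $G_{\underline{\mathcal{F}}}$ up to natural isomorphism by the isomorphism classes of the $\mathcal{F}_i$ together with a single isomorphism $k^d \cong \operatorname{Hom}(\mathcal{F}_{-1}, \mathcal{F}_0)$, so it suffices to produce $\sigma$ with $\sigma(\mathcal{E}_i) \cong \mathcal{F}_i$ for all $i$; the ``appropriate'' isomorphism of Hom-spaces in the statement is then the one induced by $\sigma$ at the $(-1,0)$ step. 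By Remark~\ref{remark.helix}(4), it is enough to arrange $\sigma(\mathcal{E}_{-1}) \cong \mathcal{F}_{-1}$ and $\sigma(\mathcal{E}_0) \cong \mathcal{F}_0$, since the rest of the helix is then forced up to term-wise isomorphism.

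Next I would construct $\sigma$ as $\sigma_{P, \mathcal{L}_0} := t_P^{*} \circ (- \otimes \mathcal{L}_0)$ for suitable $P \in X$ and $\mathcal{L}_0 \in \operatorname{Pic}^0(X)$. Both factors preserve rank and degree, hence slope, and so restrict to autoequivalences of ${\sf C}^{\theta}$. Since each $\mathcal{E}_i, \mathcal{F}_i$ is a simple vector bundle with relatively prime rank and degree, Atiyah's theorem \cite{Ati} identifies the moduli space $M(r_i, d_i)$ via the determinant with $\operatorname{Pic}^{d_i}(X) \cong X$. Using that $t_P^{*}$ acts trivially on $\operatorname{Pic}^0(X)$ and that for any $L$ of degree $\delta$ the class $t_P^{*} L \otimes L^{-1}$ corresponds to $-\delta P$ in $X$, a direct calculation shows that $\sigma_{P, \mathcal{L}_0}$ induces on $M(r_{-1}, d_{-1}) \times M(r_0, d_0) \cong X \times X$ translation by the image of $(P, \mathcal{L}_0)$ under the group homomorphism with matrix
\begin{equation*}
M \;=\; \begin{pmatrix} -d_{-1} & r_{-1} \\ -d_0 & r_0 \end{pmatrix}.
\end{equation*}
Since $\det M = r_{-1} d_0 - d_{-1} r_0 = d \neq 0$, this is an isogeny of $X \times X$, hence surjective; thus one can choose $(P, \mathcal{L}_0)$ so that $\sigma_{P, \mathcal{L}_0}$ sends $(\det \mathcal{E}_{-1}, \det \mathcal{E}_0)$ to $(\det \mathcal{F}_{-1}, \det \mathcal{F}_0)$, and Atiyah then gives $\sigma_{P, \mathcal{L}_0}(\mathcal{E}_i) \cong \mathcal{F}_i$ for $i = -1, 0$.

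The main obstacle I anticipate is the determinantal book-keeping identifying the action of $\sigma_{P, \mathcal{L}_0}$ on $X \times X$ with the matrix $M$, and confirming $\det M = \pm d$, where $d = \dim \operatorname{Hom}(\mathcal{E}_{-1}, \mathcal{E}_0)$ is the helix invariant; this is precisely where the hypotheses $\deg \mathcal{E}_i = \deg \mathcal{F}_i$, $\rk \mathcal{E}_i = \rk \mathcal{F}_i$ together with the standing helix condition enter. Once this is in place, surjectivity of the action and the compatibility of Hom-space isomorphisms in Proposition~\ref{prop:pullbackFunctor} are both routine, and the statement follows by packaging $F = \sigma_{P, \mathcal{L}_0}^{-1}$.
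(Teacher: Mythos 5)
Your proposal is correct and follows essentially the same route as the paper's proof: reduce via Proposition~\ref{prop:pullbackFunctor} to finding an autoequivalence of ${\sf C}^{\theta}$ carrying $\mathcal{E}_i$ to $\mathcal{F}_i$ for $i=-1,0$, build it from a translation and a degree-zero twist, use Atiyah's determinant classification of simple bundles, and conclude by surjectivity of the induced map on $X \times X$, whose defining integer matrix has determinant $\pm d \neq 0$. The only cosmetic difference is that you invoke ``isogeny, hence surjective'' where the paper pre-composes with the adjugate matrix to reduce to surjectivity of multiplication by $d$.
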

\begin{proof}
From Proposition~\ref{prop:pullbackFunctor}, it suffices to find an auto-equivalence $G$ of ${\sf C}^{\theta}$ such that $G \mathcal{E}_i \simeq \mathcal{F}_i$ for $i = -1,0$ (and hence by the helix property, all $i \in \mathbb{Z}$). Given that $\mathcal{E}_i$ of rank $r_i$, its isomorphism class is determined completely by the isomorphism class of the determinant bundle $\det \mathcal{E}_i$. Now if $G$ is given by tensoring with the degree 0 line bundle $\mathcal{L}$ then $\det(G \mathcal{E}_i) \simeq \mathcal{L}^{\otimes r_i} \otimes \det(\mathcal{E}_i)$. If $G$ corresponds to pulling back by some translation on the elliptic curve, then $\det(G \mathcal{E}_i) \simeq \mathcal{N}^{\otimes d_i} \otimes \det(\mathcal{E}_i)$ where $\mathcal{N}$ is the degree 0 line bundle corresponding to that translation. It thus suffices to show that for arbitrary degree 0 line bundles $\mathcal{L}_{-1}, \mathcal{L}_0$, we can find $\mathcal{L}, \mathcal{N}$ such that 
\begin{equation*}
    \mathcal{L}^{\otimes r_{-1}} \otimes \mathcal{N}^{\otimes d_{-1}} \simeq \mathcal{L}_{-1}, \ \ \mathcal{L}^{\otimes r_0} \otimes \mathcal{N}^{\otimes d_0} \simeq \mathcal{L}_0.
\end{equation*}
In other words, we need to show that the morphism of abelian varieties $ E \times E \to E \times E$ given by the matrix $\Phi :=\left(\begin{smallmatrix} r_{-1} & d_{-1}\\ r_0 & d_0\end{smallmatrix}\right)$ is surjective. Pre-composing with the adjoint matrix gives the morphism $E \times E \to E \times E$ corresponding to the matrix $\Phi \operatorname{Adj}(\Phi) = \left(\begin{smallmatrix} d & 0\\ 0 & d\end{smallmatrix}\right)$ which is surjective since the multiplication by $d$ map on $E$ is. It follows that $\Phi$ is also surjective as desired. 
\end{proof}

\end{document}